\newcommand{\todo}[1][\null]{\ensuremath{\clubsuit}}
\newcommand{\noprint}[1]{}
\newcommand{\checked}[1][\null]{\ensuremath{\boldsymbol{\surd}}}
\newcommand{\p}{\partial}
\newcommand{\ord}{\mathop{\rm ord}\nolimits}
\newcommand{\lsemioplus}{\mathbin{\mbox{$\lefteqn{\hspace{.77ex}\rule{.4pt}{1.2ex}}{\in}$}}}
\newcommand{\ri}{\mathfrak r}
\newtheorem{theorem}{Theorem}
\newtheorem{lemma}[theorem]{Lemma}
\newtheorem{corollary}[theorem]{Corollary}
\newtheorem{proposition}[theorem]{Proposition}
{\theoremstyle{definition}
\newtheorem{remark}[theorem]{Remark}
}
\begin{document}

\par\noindent {\LARGE\bf
Generalized symmetries, conservation laws\\ and Hamiltonian structures\\ of an isothermal no-slip drift flux model
\par}

\vspace{4mm}\par\noindent {\large Stanislav Opanasenko$^{\dag\ddag}$, Alexander Bihlo$^\dag$, Roman O.\ Popovych$^{\ddag\S\natural}$ and Artur Sergyeyev$^\natural$
}

\vspace{3mm}\par\noindent {\it
$^{\dag}$~Department of Mathematics and Statistics, Memorial University of Newfoundland,\\
$\phantom{^{\dag}}$~St.\ John's (NL) A1C 5S7, Canada\par
}
\vspace{2mm}\par\noindent {\it
$^\ddag$~Institute of Mathematics of NAS of Ukraine, 3 Tereshchenkivska Str., 01024 Kyiv, Ukraine\par
}
\vspace{2mm}\par\noindent {\it
$^{\S}$~Fakult\"at f\"ur Mathematik, Universit\"at Wien, Oskar-Morgenstern-Platz 1, 1090 Wien, Austria%
}
\vspace{2mm}\par\noindent {\it
$^{\natural}$~Mathematical Institute, Silesian University in Opava, Na Rybn\'\i{}\v{c}ku 1, 746 01 Opava,\\
$\phantom{^{\natural}}$~Czech Republic
}

\vspace{2mm}\par\noindent
\textup{E-mail:} sopanasenko@mun.ca, abihlo@mun.ca, rop@imath.kiev.ua, artur.sergyeyev@math.slu.cz\!
\par

\vspace{6mm}\par\noindent\hspace*{9mm}\parbox{142mm}{\small
We study the hydrodynamic-type system of differential equations modeling isothermal no-slip drift flux.
Using the facts that the system is partially coupled and its subsystem reduces to the (1+1)-dimensional Klein--Gordon equation,
we exhaustively describe generalized symmetries, cosymmetries and local conservation laws of this system.
A generating set of local conservation laws under the action of generalized symmetries
is proved to consist of two zeroth-order conservation laws.
The subspace of translation-invariant conservation laws is singled out from the entire space of local conservation laws.
We also find broad families of local recursion operators and a nonlocal recursion operator,
and construct an infinite family of Hamiltonian structures involving an arbitrary function of a single argument.
For each of the constructed Hamiltonian operators, we obtain the associated algebra of Hamiltonian symmetries.
}\par

\noprint{
Keywords: generalized symmetry, local conservation law, recursion operator, Hamiltonian structure, hydrodynamic-type system, isothermal no-slip drift flux, point transformation

MSC: 37K05 (Primary) 76M60, 35B06 (Secondary)

76-XX   Fluid mechanics {For general continuum mechanics, see 74Axx, or other parts of 74-XX}
 76Mxx  Basic methods in fluid mechanics [See also 65-XX]
  76M60 Symmetry analysis, Lie group and algebra methods

37-XX	Dynamical systems and ergodic theory
 37Kxx  Infinite-dimensional Hamiltonian systems [See also 35Axx, 35Qxx]
  37K05 Hamiltonian structures, symmetries, variational principles, conservation laws

35-XX   Partial differential equations
 35Bxx  Qualitative properties of solutions
  35B06 Symmetries, invariants, etc.

}

\section{Introduction}\label{sec:IDFMIntroduction}

The drift flux model introduced in~\cite{ZuberFindlay1965} is a simplified model of a well-known two-phase flow phenomenon~\cite{IshiiHibiki2006,YadigarogluHewitt2018}.
The former was thoroughly studied in~\cite{EvjeFjelde2002,EvjeFlatten2005,EvjeFlatten2007,EvjeKarlsen2008},
where several submodels easier to tackle but still real-world applicable were suggested.
Amongst them is the isothermal no-slip drift flux model given by the system
\begin{gather*}
\rho^1_t+u\rho^1_x+u_x\rho^1=0,\\
\rho^2_t+u\rho^2_x+u_x\rho^2=0,\\
(\rho^1+\rho^2)(u_t+uu_x)+a^2(\rho^1_x+\rho^2_x)=0,
\end{gather*}
which we denote by~$\mathcal S$.
This model describes the mixing motion of liquids (or gases) rather than their individual phases.
Here $u=u(t,x)$ is the common velocity,
$\rho^1=\rho^1(t,x)$ and $\rho^2=\rho^2(t,x)$ are the densities of the liquids,
and the constant parameter~$a$ can be set to~1 by scaling $(x,u)$ with $a$.
Any constraint meaning that $\rho^1$ and $\rho^2$ are proportional, e.g., $\rho^2=\rho^1$ or $\rho^2=0$,
reduces~$\mathcal S$ to the system~$\tilde{\mathcal S_0}$
describing one-dimensional isentropic gas flows with constant sound speed,
cf. the system (3)--(4) with $\nu=0$ in~\cite[Section~2.2.7]{RozhdestvenskiiJanenko1983}.
The system~$\mathcal S$ is a diagonalizable hydrodynamic-type system
since it can be equivalently rewritten as
\begin{subequations}\label{eq:IDFMDiagonalizedSystem}
\begin{gather}
\ri^1_t+(\ri^1+\ri^2+1)\ri^1_x=0,\label{eq:IDFMDEq1}\\
\ri^2_t+(\ri^1+\ri^2-1)\ri^2_x=0,\label{eq:IDFMDEq2}\\
\ri^3_t+(\ri^1+\ri^2)\ri^3_x=0\label{eq:IDFMDEq3}
\end{gather}
\end{subequations}
by changing the dependent variables $(u,\rho^1,\rho^2)$ to the Riemann invariants $(\ri^1,\ri^2,\ri^3)$ via
\[
\ri^1=\frac{u+\ln(\rho^1+\rho^2)}2,\quad \ri^2=\frac{u-\ln(\rho^1+\rho^2)}2,\quad \ri^3=\frac{\rho^2}{\rho^1}.
\]

The corresponding characteristic velocities
\begin{gather}\label{eq:IDFMCharacteristicSpeeds}
V^1=\ri^1+\ri^2+1,\quad V^2=\ri^1+\ri^2-1,\quad V^3=\ri^1+\ri^2
\end{gather}
are distinct, meaning that the system~$\mathcal S$ is strictly hyperbolic.
Besides, the characteristic velocities satisfy the system
\[
\p_{\ri^i}\frac{V^k_{\ri^j}}{V^j-V^k}=\p_{\ri^j}\frac{V^k_{\ri^i}}{V^i-V^k}\quad\text{ for all}\quad i,j,k\in\{1,2,3\}\quad\text{with}\quad i,j\ne k.
\]
Thus, the system~$\mathcal S$ is semi-Hamiltonian and, since $V^3_{\ri^3}=0$, it is not genuinely nonlinear with respect to~$\ri^3$;
see \cite{Tsarev1991} for related definitions.
The system~$\mathcal S$ is also partially coupled.
The essential subsystem~$\mathcal S_0$ consisting of the equations~\eqref{eq:IDFMDEq1}--\eqref{eq:IDFMDEq2}
coincides with the diagonalized form of the system~$\tilde{\mathcal S_0}$
\mbox{\cite[Section~2.2.7, Eq.~(16)]{RozhdestvenskiiJanenko1983}.}

Hydrodynamic-type systems are extensively studied in the literature in view of their various physical applications in fluid
mechanics, acoustics and gas and shock dynamics~\cite{RozhdestvenskiiJanenko1983,Whitham1999} and
rich differential geometry~\cite{DubrovinNovikov1983,DubrovinNovikov1989,Tsarev1985,Tsarev1991}.
See~\cite{BlaszakSergyeyev2009,BurdeSergyeyev2013,Ferapontov1991,Ferapontov1995,GrundlandHariton2008,GrundlandHuard2007,Pavlov2003,Sergyeyev2017,Sergyeyev2018}
and references therein for an assortment of examples.

In view of the above properties, the system~$\mathcal S$ can be integrated in an implicit form.
In~\cite{OpanasenkoBihloPopovychSergyeyev2020} for this system
we expressed the general solution in terms of the general solution
of the (1+1)-dimensional Klein--Gordon equation
using the generalized hodograph transformation~\cite{Tsarev1985}
and described the entire set of local solutions via the linearization of the subsystem~$\mathcal S_0$
to the same equation.
Since the practical use of the derived representations for solutions of~$\mathcal S$ is limited
because of their implicit form and complicated structure,
in~\cite{OpanasenkoBihloPopovychSergyeyev2020}
we also began the extended classical symmetry analysis of the system~$\mathcal S$.
In particular, for this system we constructed
the maximal Lie invariance algebra~$\mathfrak g$,
the algebra of generalized symmetries of order not greater than one,
the complete point symmetry group and group-invariant solutions.
Thus, the algebra~$\mathfrak g$ is spanned by the vector fields
\begin{gather*}
\hat{\mathcal D}=t\p_t+x\p_x,\ \
\hat{\mathcal G}_1=t\p_x+\p_{\ri^1},\ \
\hat{\mathcal G}_2=\p_{\ri^1}-\p_{\ri^2},\ \
\hat{\mathcal P}^t=\p_t,\ \
\hat{\mathcal P}^x=\p_x,\ \
\hat{\mathcal W}(\Omega)=\Omega(\ri^3)\p_{\ri^3},
\end{gather*}
where $\Omega$ runs through the set of smooth functions of $\ri^3$.
The maximal Lie invariance algebra~$\mathfrak g_0$ of the essential subsystem~$\mathcal S_0$
is wider than the projection of the algebra~$\mathfrak g$ to the space with the coordinates $(t,x,\ri^1,\ri^2)$
and is spanned by the vector fields
\begin{gather*}
\breve{\mathcal D}=t\p_t+x\p_x,\quad \breve{\mathcal G_1}=t\p_x+\p_{\ri^1},\quad
\breve{\mathcal G_2}=\p_{\ri^1}-\p_{\ri^2},
\quad \breve{\mathcal P}(\tau^0,\xi^0)=\tau(\ri^1,\ri^2)\p_t+\xi(\ri^1,\ri^2)\p_x,\\
\breve{\mathcal J}=
\left(\frac12x-t(\ri^1+\ri^2)\right)\p_t+t\left(\ri^1-\ri^2-\frac12(\ri^1+\ri^2)^2+\frac12\right)\p_x+\ri^1\p_{\ri^1}-\ri^2\p_{\ri^2},
\end{gather*}
where $(\tau,\xi)$ is a tuple of smooth functions of~$(\ri^1,\ri^2)$, running through the solution set of the system $\xi_{\ri^1}=V^2\tau_{\ri^1}$, $\xi_{\ri^2}=V^1\tau_{\ri^2}$.
In~\cite{OpanasenkoBihloPopovychSergyeyev2020}, for the system~$\mathcal S$ we also found
the zeroth-order local conservation laws using the direct method
and, following~\cite{Doyle1994}, constructed the entire space of first-order conservation laws with $(t,x)$-translation-invariant densities of
and a subspace of $(t,x)$-translation-invariant conservation laws of arbitrarily high order.
Building on the description of the algebra of generalized symmetries of order not greater than one,
we obtained an infinite-dimensional subspace of generalized symmetries of arbitrarily high order for~$\mathcal S$.
(In the present paper we show that this subspace is an ideal in the entire algebra of generalized symmetries of the system~$\mathcal S$.)
\looseness=-1

\looseness=-1
At the same time, the system~$\mathcal S$ possesses two properties
that allow us to exhaustively describe the entire spaces of generalized symmetries, cosymmetries and local conservation laws
(see \cite{KrasilshchikVerbovetskyVitolo2017} for definitions).
Firstly, the system is partially coupled with the essential subsystem~$\mathcal S_0$
being linearizable through the rank-two hodograph transformation to the (1+1)-dimensional Klein--Gordon equation,
which was thoroughly studied in~\cite{OpanasenkoPopovych2018}
from the point of view of generalized and variational symmetries and local conservation laws.
Secondly, in addition to being not genuinely nonlinear with respect to~$\ri^3$,
the system~$\mathcal S$ is decoupled with respect to~$\ri^3$,
and the third equation of~$\mathcal S$ is linear in~$\ri^3$. 
Thus, speaking of the degeneracy of the system~$\mathcal S$,
we mean both its linear degeneracy and decoupling with respect to~$\ri^3$.
Due to the dual nature of this degeneracy, the system~$\mathcal S$
admits not only an infinite number of linearly independent conservation laws of arbitrarily high order,
that are related to the degeneracy, cf.~\cite{Doyle1994,Sheftel1994b},
but also similar generalized symmetries.

\looseness=-1
Substantially generalizing results of~\cite{OpanasenkoBihloPopovychSergyeyev2020},
in the present paper we comprehensively study generalized symmetries, cosymmetries and local conservation laws
of the system~$\mathcal S$.
This includes both a description of the corresponding spaces and their interrelations, which are described
in terms of recursion operators and Noether and Hamiltonian operators.
Our \textit{modus operandi} to study the system~$\mathcal S$ is
to select appropriate symmetry-like objects of the Klein--Gordon equation
(generalized symmetries, cosymmetries and conservation laws), 
to find their counterparts for the system~$\mathcal S$
and to complement these counterparts with the objects of the same kind that are related to the degeneracy of the system.
Then we prove that the constructed objects span the entire spaces of objects of the corresponding kinds for the system~$\mathcal S$.
As a result, we obtain one more example, in addition to a few ones existing in the literature, cf.\ \cite{OpanasenkoPopovych2018},
where generalized symmetries and local conservation laws are exhaustively described for a model
arising in real-world applications and possessing symmetry-like objects of arbitrarily high order.

The structure of this paper is as follows.
In Section~\ref{sec:IDFMSolutionsViaEssentialSubsystem} we reduce the system~$\mathcal S$ to the (1+1)-dimensional Klein--Gordon equation
and show that any regular solution of the former is expressed in terms of solutions of the latter.
In Section~\ref{sec:IDFMPreliminaries} we lay out notations and auxiliary results to be used throughout
the remainder of the paper.
It is proved in Section~\ref{sec:IDFMGenSyms} that the algebra of reduced generalized symmetries of the system~$\mathcal S$
is a (non-direct) sum of an ideal related to the degeneracy of~$\mathcal S$
and consisting of generalized vector fields with zero $\ri^1$- and $\ri^2$-components
and of a subalgebra stemming from generalized symmetries of the Klein--Gordon equation.
At the same time, not all generalized symmetries of the Klein--Gordon equation have counterparts among those of the system~$\mathcal S$,
and we solve the problem on selecting appropriate elements of the algebra of generalized symmetries of the Klein--Gordon equation.
This differs from cosymmetries and conservation laws of~$\mathcal S$,
for which there are injections from the corresponding spaces for the Klein--Gordon equation to those for the system~$\mathcal S$,
see Sections~\ref{sec:IDFMCosyms} and~\ref{sec:IDFM:CLs}, respectively.
The space of conservation laws of~$\mathcal S$ is proved to be generated, under the action of generalized symmetries of~$\mathcal S$,
by two zeroth-order conservation laws.
We also find the space of conservation-law characteristics of~$\mathcal S$.
The knowledge of them helps us to single out the conservation laws of orders zero and one as well as the $(t,x)$-translation-invariant ones.
In Section~\ref{sec:IDFMHamiltonianStructure} we construct a family of compatible hydrodynamic-type Hamiltonian operators
for the system~$\mathcal S$, parameterized by an arbitrary function of the degenerate Riemann invariant~$\ri^3$.
For each of these operators, we find the space of its distinguished (Casimir) functionals
and the associated algebra of Hamiltonian symmetries.
The partition of symmetry-like objects in accordance with the above two properties of the system~$\mathcal S$
also manifests itself in recursion operators studied in Section~\ref{sec:IDFMRecursionOperator}.
The independent local recursion operators constructed turn out either to nontrivially act
in the subspace of generalized symmetries arising due to the degeneracy of~$\mathcal S$
and annihilate the counterparts of generalized symmetries of the Klein--Gordon equation or vice versa.
A nonlocal recursion operator is also found.
Section~\ref{sec:IDFMConclusions} is left for the conclusions,
where we underline the nontrivial features encountered
in the course of the study of the system~$\mathcal S$ in the present paper
and discuss further problems to be considered for this system
within the framework of symmetry analysis of differential equations.

\section{Solution through linearization of the essential subsystem}\label{sec:IDFMSolutionsViaEssentialSubsystem}

Using the facts that the system~$\mathcal S$ is partially coupled and the subsystem~$\mathcal S_0$ can be linearized,
we construct an implicit representation of the general solution
for the diagonalized form~\eqref{eq:IDFMDiagonalizedSystem} of the system~$\mathcal S$
in terms of the general solution of the (1+1)-dimensional Klein--Gordon equation;
cf.~\cite[Section~8]{OpanasenkoBihloPopovychSergyeyev2020}.
%
At first, we reduce the system~\eqref{eq:IDFMDiagonalizedSystem}
by a point transformation to a system containing the (1+1)-dimensional Klein--Gordon equation.
It is convenient to derive this transformation as a chain of simpler point transformations.
We begin with the rank-two hodograph transformation, where
\begin{gather*}
y=\ri^1/2,\quad z=-\ri^2/2\quad \mbox{are the new independent variables and}\\
p=t,\quad \hat q=x,\quad s=\ri^3\quad \mbox{are the new dependent variables}.
\end{gather*}
(For convenience of the presentation, we compose the hodograph transformation with scaling of~$\ri^1$ and~$\ri^2$.)
This transformation maps the system~\eqref{eq:IDFMDiagonalizedSystem} to the system
\begin{subequations}\label{eq:IDFM:DiagSystemTranformedByHodograph}
\begin{gather}
\hat q_z-(2y-2z+1)p_z=0,\label{eq:IDFM:DiagSystemTranformedByHodographA}\\
\hat q_y-(2y-2z-1)p_y=0,\label{eq:IDFM:DiagSystemTranformedByHodographB}\\
s_yp_z+s_zp_y=0.\label{eq:IDFM:DiagSystemTranformedByHodographC}
\end{gather}
\end{subequations}
After representing the equation~\eqref{eq:IDFM:DiagSystemTranformedByHodographA}
in the form $\big(\hat q-(2y-2z+1)p\big)_z-2p=0$,
it becomes natural to make the change $\check q=\hat q-(2y-2z+1)p$ of~$\hat q$.
Then the equations~\eqref{eq:IDFM:DiagSystemTranformedByHodographA} and~\eqref{eq:IDFM:DiagSystemTranformedByHodographB}
take the form $p=\check q_z/2$ and $\check q_y+2p_y+2p=0$, respectively.
Excluding~$p$ from the second equation in view of the first one, we obtain
the second-order linear partial differential equation $\check q_{yz}+\check q_y+\check q_z=0$ in~$\check q$,
which reduces by the change~$q={\rm e}^{y+z}\check q$ of~$\check q$ to
the (1+1)-dimensional Klein--Gordon equation for~$q$ in light-cone variables, $q_{yz}=q$.
Carrying out this chain of two transformations in the whole system~\eqref{eq:IDFM:DiagSystemTranformedByHodograph},
we obtain the system~$\mathcal K$, which reads
\begin{subequations}\label{eq:IDFMSupersystem}
\begin{gather}\label{eq:IDFMSupersystemA}
q_{yz}=q,
\\\label{eq:IDFMSupersystemB}
K^1s_y=K^2s_z, \quad\mbox{where}\quad
K^1:=q_{zz}-2q_z+q,\quad
K^2:=q_y+q_z-2q.
\end{gather}
\end{subequations}
We have $K^1=(\mathrm D_z-1)^2q$ and, on solutions of~\eqref{eq:IDFMSupersystemA},
$K^2=-(\mathrm D_y-1)(\mathrm D_z-1)q$,
$\mathrm D_yK^1=K^2$ and $\mathrm D_zK^2=K^1$.
Here~$\mathrm D_y$ and~$\mathrm D_z$ are the total derivative operators
with respect to~$y$ and~$z$, respectively.
We exclude~$p$ from the system~\eqref{eq:IDFMSupersystem} in view of the equation
\begin{gather}\label{eq:IDFMSupersystemC}
p=\frac12{\rm e}^{-y-z}(q_z-q)
\end{gather}
as well as we neglect this equation itself.
The composition of the above three transformations is the transformation
\begin{gather}\label{eq:IDFMTransReducingToKGEq}
\mathcal T\colon\quad
y=\frac{\ri^1}2,\quad
z=-\frac{\ri^2}2,\quad
p=t,\quad
q={\rm e}^{(\ri^1-\ri^2)/2}\big(x-(\ri^1+\ri^2+1)t\big),\quad
s=\ri^3.
\end{gather}
Therefore, to make the inverse transition from the system~\eqref{eq:IDFMSupersystem}
to the system~\eqref{eq:IDFMDiagonalizedSystem},
we should attach the equation~\eqref{eq:IDFMSupersystemC} to the system~\eqref{eq:IDFMSupersystem},
thus extending the tuple of dependent variables $(q,s)$ by~$p$,
and carry out the inverse to the transformation~\eqref{eq:IDFMTransReducingToKGEq},
\begin{gather}\label{eq:IDFMInverseToTransReducingToKGEq}
\hat{\mathcal T}\colon\quad
t=p,\quad
x={\rm e}^{-y-z}q+(2y-2z+1)p,\quad
\ri^1=2y,\quad
\ri^2=-2z,\quad
\ri^3=s.
\end{gather}

It is convenient to collect the expressions for low-order derivatives of~$p$ and~$q$
and for their combinations in terms of the old variables in view of the system~\eqref{eq:IDFMDiagonalizedSystem},
which will be needed below:
\begin{gather*}
p_y=-\frac1{\ri^1_x},\quad
p_z=-\frac1{\ri^2_x},\quad
K^1=-\frac2{\ri^2_x}{\rm e}^{(\ri^1-\ri^2)/2},\quad
K^2= \frac2{\ri^1_x}{\rm e}^{(\ri^1-\ri^2)/2},\quad
\frac{s_y}{K^2}={\rm e}^{(\ri^1-\ri^2)/2}\frac{\ri^3_x}2,
\\
q_y={\rm e}^{(\ri^1-\ri^2)/2}\left(\frac2{\ri^1_x}+x-V^1t-2t\right),\quad
q_z={\rm e}^{(\ri^1-\ri^2)/2}(x-V^2t),
\\
q_{zz}={\rm e}^{(\ri^1-\ri^2)/2}\left(-\frac2{\ri^2_x}+x-V^2t+2t\right).
\end{gather*}

Following the procedure analogous to that in~\cite{OpanasenkoBihloPopovychSergyeyev2020},
we find the complete set of local solutions of the system~\eqref{eq:IDFMDiagonalizedSystem}
via the linearization of the subsystem~\eqref{eq:IDFMDEq1}--\eqref{eq:IDFMDEq2}.

We are allowed to make the point transformation~\eqref{eq:IDFMTransReducingToKGEq}
if and only if the nondegeneracy condition $\ri^1_t\ri^2_x-\ri^1_x\ri^2_t\neq0$ holds,
which is equivalent, on solutions of~\eqref{eq:IDFMDiagonalizedSystem}, to $\ri^1_x\ri^2_x\neq0$.
Therefore, $\ri^1_t\ri^2_t\neq0$ as well,
and thus both Riemann invariants~$\ri^1$ and~$\ri^2$ are not constants.
In this case, we introduce the ``pseudopotential''~$\Psi$
defined by the potential system $\Psi_y=q-\Psi$, $\Psi_z=q_z-\Psi$ for the equation~\eqref{eq:IDFMSupersystemA}.
In fact, this ``pseudopotential'' is a modification, $\Psi={\rm e}^{-y-z}\tilde\Psi$,
of the standard potential~$\tilde\Psi$ for the equation~\eqref{eq:IDFMSupersystemA}
associated with the conserved current $({\rm e}^{y+z}q_z,-{\rm e}^{y+z}q)$ of this equation
via the potential system $\tilde\Psi_y={\rm e}^{y+z}q$, $\tilde\Psi_z={\rm e}^{y+z}q_z$.
It is easily seen that the function~$\Psi$ satisfies the Klein--Gordon equation~$\Psi_{yz}=\Psi$.
Moreover, solutions of the equations~\eqref{eq:IDFMSupersystemA}, \eqref{eq:IDFMSupersystemB}
and~\eqref{eq:IDFMSupersystemC} are locally expressed in terms of~$\Psi$,
\[
q=\Psi_y+\Psi,\quad
p=\frac12{\rm e}^{-y-z}(\Psi_z-\Psi_y),\quad
s=W\left({\rm e}^{y+z}(\Psi_y+\Psi_z-2\Psi)\right).
\]
Here and in what follows~$W$ is an arbitrary smooth function of its argument.
Returning to the old coordinates, we obtain the regular family of solutions of the system~\eqref{eq:IDFMDiagonalizedSystem},
which is expressed in terms of the general solution of the Klein--Gordon equation.
Note that the nondegeneracy condition for this inverse transformation is $K^1K^2\neq0$, where, in terms of~$\Psi$,
\[
K^1=\Psi_{zz}-\Psi_z+\Psi_y-\Psi,\quad K^2=\Psi_{yy}-\Psi_y+\Psi_z-\Psi.
\]
In view of the Klein--Gordon equation~$\Psi_{yz}=\Psi$,
the inequalities $K^1\ne0$ and $K^2\ne0$ are equivalent to each other
as well as to the condition $\Psi\notin\langle {\rm e}^{-y-z}, {\rm e}^{y+z}, (y-z){\rm e}^{y+z} \rangle$.

If the nondegeneracy condition $\ri^1_t\ri^2_x-\ri^1_x\ri^2_t\ne0$ does not hold,
then at least one of the Riemann invariants~$\ri^1$ and~$\ri^2$ is a constant.
If only one Riemann invariant is a constant,
we derive the singular family of solutions of~\eqref{eq:IDFMDiagonalizedSystem}.
Let $\ri^1$ be a constant,
$\ri^1=c$. Then the equation~\eqref{eq:IDFMDEq1} is trivially satisfied,
and we make the rank-one hodograph transformation
$\bar t=t$, $\bar z=\ri^2$, $\bar q=x$, $\bar s=\ri^3$
in the two remaining equations~\eqref{eq:IDFMDEq2} and~\eqref{eq:IDFMDEq3},
exchanging the roles of~$x$ and~$\ri^2$,
that is, $\bar t$ and $\bar z$ are the new independent variables,
$\bar q$ and~$\bar s$ are the new dependent variables.
This yields the system
$
\bar q_{\bar t}=\bar z+c-1,\ \bar s_{\bar z}+\bar q_{\bar z}\bar s_{\bar t}=0.
$
Integrating the first equation to $\bar q=(\bar z+c-1)\bar t+{\rm e}^{\bar z}\Theta^2_{\bar z}$,
where $\Theta^2$ is an arbitrary function of~$\bar z$. It is chosen with a help of a hindsight to represent
the general solution of the second equation in the form $\bar s=W({\rm e}^{-\bar z}\bar t-\Theta^2_{\bar z}-\Theta^2)$.
The consideration when $\ri^2$ being a constant is similar.

When the both Riemann invariants~$\ri^1$ and~$\ri^2$ are constants,
we obtain an ultra-singular family of solutions of~\eqref{eq:IDFMDiagonalizedSystem}.

\begin{theorem}\label{thm:IDFMCompleteSolutioN}
Any solution of the system~\eqref{eq:IDFMDiagonalizedSystem} (locally) belongs to one of the following families;
below $W$~is an arbitrary function of its argument.

\medskip\par\noindent
1. The regular family, where both the Riemann invariants~$\ri^1$ and~$\ri^2$ are not constants (the general solution):
\begin{gather*}
t=-{\rm e}^{(\ri^2-\ri^1)/2}(\Psi_{\ri^1}+\Psi_{\ri^2}),\quad
x={\rm e}^{(\ri^2-\ri^1)/2}\big((2\Psi_{\ri^1}+\Psi)-(\ri^1+\ri^2+1)(\Psi_{\ri^1}+\Psi_{\ri^2})\big),\\
\ri^3=W\big({\rm e}^{(\ri^1-\ri^2)/2}(\Psi_{\ri^1}-\Psi_{\ri^2}-\Psi)\big).
\end{gather*}
Here the function $\Psi=\Psi(\ri^1,\ri^2)$ runs through the set of solutions of the Klein--Gordon equation $\Psi_{\ri^1\ri^2}=-\Psi/4$
with $\Psi\notin\langle\, {\rm e}^{(\ri^2-\ri^1)/2},\, {\rm e}^{(\ri^1-\ri^2)/2},\, (\ri^1+\ri^2){\rm e}^{(\ri^1-\ri^2)/2}\, \rangle$.

\medskip\par\noindent
2. The two singular families, where exactly one of the Riemann invariants~$\ri^1$ and~$\ri^2$ is a constant:
\begin{gather*}
\ri^1=c, \quad x=(\ri^2+c-1)t+{\rm e}^{ \ri^2}\Theta^2_{\ri^2},\quad \ri^3=W({\rm e}^{-\ri^2}t-\Theta^2_{\ri^2}-\Theta^2);\\
\ri^2=c, \quad x=(\ri^1+c+1)t+{\rm e}^{-\ri^1}\Theta^1_{\ri^1},\quad \ri^3=W({\rm e}^{ \ri^1}t+\Theta^1_{\ri^1}-\Theta^1).
\end{gather*}
Here $c$ is an arbitrary constant
and $\Theta^1=\Theta^1(\ri^1)$ and $\Theta^2=\Theta^2(\ri^2)$ are arbitrary functions of their arguments.

\medskip\par\noindent
3. The ultra-singular family, where $\ri^1$ and~$\ri^2$ are arbitrary constants and $\ri^3=W(x-(\ri^1+\ri^2)t)$.
\end{theorem}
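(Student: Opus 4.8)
The plan is to exploit the chain of point transformations leading to the system~$\mathcal K$ in~\eqref{eq:IDFMSupersystem} together with the ``pseudopotential'' substitution, and then to track the case split dictated by the nondegeneracy condition $\ri^1_t\ri^2_x-\ri^1_x\ri^2_t\neq0$. First I would observe that on solutions of~\eqref{eq:IDFMDiagonalizedSystem} this condition is equivalent to $\ri^1_x\ri^2_x\neq0$, so that exactly one of three mutually exclusive and jointly exhaustive scenarios occurs locally: (i) both $\ri^1$ and $\ri^2$ are nonconstant; (ii) exactly one of them is constant; (iii) both are constant. Each case gives one of the three families in the statement, and the main work is the regular case~(i).

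For the regular family, the transformation~$\mathcal T$ in~\eqref{eq:IDFMTransReducingToKGEq} is admissible, so a solution of~\eqref{eq:IDFMDiagonalizedSystem} corresponds to a solution $(q,s)$ of~$\mathcal K$ with $K^1K^2\neq0$. Introducing the pseudopotential~$\Psi$ via $\Psi_y=q-\Psi$, $\Psi_z=q_z-\Psi$ (whose compatibility follows from $q_{yz}=q$), one checks directly that $\Psi_{yz}=\Psi$ and that $q=\Psi_y+\Psi$, $p=\tfrac12{\rm e}^{-y-z}(\Psi_z-\Psi_y)$, $s=W({\rm e}^{y+z}(\Psi_y+\Psi_z-2\Psi))$ solve~\eqref{eq:IDFMSupersystemA}--\eqref{eq:IDFMSupersystemC}; here the freedom in~$W$ comes from integrating~\eqref{eq:IDFMSupersystemB}, which says that $s$ is constant along the characteristic foliation determined by $K^1s_y=K^2s_z$, and one verifies that $\mathrm D_y$ and $\mathrm D_z$ both annihilate the stated argument of~$W$ modulo the equation for~$\Psi$. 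Substituting $y=\ri^1/2$, $z=-\ri^2/2$ (so $\mathrm D_y=2\p_{\ri^1}$, $\mathrm D_z=-2\p_{\ri^2}$) and applying the inverse transformation~\eqref{eq:IDFMInverseToTransReducingToKGEq} then yields precisely the expressions displayed in part~1, with $\Psi_{yz}=\Psi$ becoming $\Psi_{\ri^1\ri^2}=-\Psi/4$. The nondegeneracy condition $K^1K^2\neq0$ rewrites, using $\Psi_{yz}=\Psi$ to eliminate mixed derivatives, as $\Psi\notin\langle {\rm e}^{-y-z},{\rm e}^{y+z},(y-z){\rm e}^{y+z}\rangle$, i.e.\ $\Psi\notin\langle {\rm e}^{(\ri^2-\ri^1)/2},{\rm e}^{(\ri^1-\ri^2)/2},(\ri^1+\ri^2){\rm e}^{(\ri^1-\ri^2)/2}\rangle$; conversely, given any such $\Psi$, the formulas produce a genuine (nondegenerate) solution, so the correspondence is onto the regular family. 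Completeness --- that \emph{every} regular solution arises this way --- follows because the whole construction is a composition of invertible point transformations and an exact potentialization, so it is reversible on the nondegeneracy locus.

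For the singular families~(ii), say $\ri^1\equiv c$: then~\eqref{eq:IDFMDEq1} holds identically, and the rank-one hodograph transformation $\bar t=t$, $\bar z=\ri^2$, $\bar q=x$, $\bar s=\ri^3$ turns~\eqref{eq:IDFMDEq2}--\eqref{eq:IDFMDEq3} into $\bar q_{\bar t}=\bar z+c-1$, $\bar s_{\bar z}+\bar q_{\bar z}\bar s_{\bar t}=0$. Integrating the first gives $\bar q=(\bar z+c-1)\bar t+{\rm e}^{\bar z}\Theta^2_{\bar z}$ for an arbitrary function $\Theta^2(\bar z)$; plugging this into the second, the characteristic ODE integrates (after the hindsight choice of parametrization) to $\bar s=W({\rm e}^{-\bar z}\bar t-\Theta^2_{\bar z}-\Theta^2)$, which one verifies by direct differentiation. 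Reverting to $(t,x,\ri^1,\ri^2,\ri^3)$ gives the first line of part~2; the case $\ri^2\equiv c$ is handled symmetrically (the sign changes in $V^1$ versus $V^2$ account for the differing exponentials and signs). Finally, in case~(iii) both $\ri^1,\ri^2$ are constants, \eqref{eq:IDFMDEq1}--\eqref{eq:IDFMDEq2} are trivial, and~\eqref{eq:IDFMDEq3} is the linear transport equation $\ri^3_t+(\ri^1+\ri^2)\ri^3_x=0$ whose general solution is $\ri^3=W(x-(\ri^1+\ri^2)t)$.

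The main obstacle I anticipate is not any single computation but the bookkeeping needed to confirm that the parametrization by $\Psi$ (resp.\ by $\Theta^1,\Theta^2,W$) is exactly right --- in particular, pinning down the precise exceptional subspace $\langle {\rm e}^{(\ri^2-\ri^1)/2},{\rm e}^{(\ri^1-\ri^2)/2},(\ri^1+\ri^2){\rm e}^{(\ri^1-\ri^2)/2}\rangle$ from $K^1K^2\neq0$, and checking that the ``regular/singular/ultra-singular'' trichotomy is genuinely a partition (no solution of positive-dimensional type is lost at the boundary between cases, e.g.\ a solution that is regular on a dense open set but degenerates on a hypersurface is still captured locally on the regular part). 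The rest reduces to substituting the formulas from~$\mathcal T$, $\hat{\mathcal T}$ and the low-order derivative table already assembled before the theorem.
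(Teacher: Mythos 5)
Your proposal follows essentially the same route as the paper's own proof: the same trichotomy governed by the nondegeneracy condition $\ri^1_t\ri^2_x-\ri^1_x\ri^2_t\neq0$, the same chain of point transformations to the system~\eqref{eq:IDFMSupersystem}, the same pseudopotential~$\Psi$ with $\Psi_{yz}=\Psi$, and the same rank-one hodograph treatment of the singular families and direct integration of the ultra-singular case. One small correction: the argument ${\rm e}^{y+z}(\Psi_y+\Psi_z-2\Psi)$ of~$W$ is \emph{not} annihilated by both $\mathrm D_y$ and $\mathrm D_z$ (that would force it to be constant); what one actually verifies is that it satisfies the single characteristic equation $K^1\mathscr D_yf=K^2\mathscr D_zf$ of~\eqref{eq:IDFMSupersystemB}, i.e.\ it is a nonconstant first integral of the vector field $K^1\p_y-K^2\p_z$ along which $s$ is constant, whence the general solution for~$s$ is an arbitrary function of it.
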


The regular, singular and ultra-singular families of solutions of the system~$\mathcal S$
are associated with solutions of the subsystem~$\mathcal S_0$ of rank 2, 1 and 0, respectively;
cf.~\cite{GrundlandHuard2006}.

Alternatively, to get the subfamily of regular solutions with nonconstant parameter function~$W$,
one can employ the generalized hodograph transformation~\cite{Tsarev1985},
see details in~\cite[Section~9]{OpanasenkoBihloPopovychSergyeyev2020}.

\section{Preliminaries}\label{sec:IDFMPreliminaries}

Given a system~$\mathcal L$ of differential equations,
we denote by~$\mathcal L^{(\infty)}$ the manifold
defined by the system~$\mathcal L$ and its differential consequences in the associated jet space.
A local object associated with~$\mathcal L$
within the framework of symmetry analysis of differential equations,
like a generalized symmetry, a conserved current of a local conservation law,
a conservation-law characteristic or a cosymmetry,
is called trivial if it vanishes on solutions of~$\mathcal L$
or, equivalently, on~$\mathcal L^{(\infty)}$.
Two such local objects of the same kind are naturally assumed equivalent
if their difference is trivial,
and thus such local objects of the same kind in total
are considered up to this equivalence relation.

The system~$\mathcal S$ given by~\eqref{eq:IDFMDiagonalizedSystem} is of the evolution form.
The jet variables~$t$, $x$ and $\ri^i_\kappa=\p^\kappa\ri^i/\p x^\kappa$, $i=1,2,3$, $\kappa\in\mathbb N_0$,
constitute the standard coordinates on the manifold~$\mathcal S^{(\infty)}$.
Therefore, up to the above equivalence relation on solutions of~$\mathcal S$,
for the coset of each of local symmetry-like objects associated with~$\mathcal S$
we can consider a representative whose components do not depend on the derivatives of~$\ri$
involving differentiation with respect to~$t$.%
\footnote{%
Here, for conservation-law characteristics we need to use Lemma~3 in~\cite{MartinezAlonso1979},
see also~\cite[Lemma 4.28]{Olver1993}.
}
A~symbol with~$[\ri]$, like $f[\ri]$, denotes a differential function of~$\ri$
that depends at most on~$t$, $x$ and a finite number of derivatives of~$\ri$ with respect to~$x$,
$f=f(t,x,\ri_0,\dots,\ri_\kappa)$, $\kappa\in\mathbb N_0$.
Below we consider only such differential functions and assume
that the components of any local symmetry-like objects associated with~$\mathcal S$
are such differential functions.
For $i\in\{1,2,3\}$,
the order~$\ord_{\ri^i}f[\ri]$ of a differential function~$f[\ri]$ with respect to~$\ri^i$ is defined
to be equal $\smash{\max\{\kappa\in\mathbb N_0\mid f_{\ri^i_\kappa}\ne0\}}$ unless this set is empty and $-\infty$ otherwise.

We restrict the total derivative operators~$\mathrm D_x$ and~$\mathrm D_t$
with respect to~$x$ and~$t$ to the set of above differential functions of~$\ri$,
and additionally exclude the derivatives of~$\ri$ that involve differentiation with respect to~$t$
from~$\mathrm D_t$ in view of the system~$\mathcal S$,
respectively obtaining the (commuting) operators
\[
\mathscr D_x:=\p_x+\sum_{\kappa=0}^\infty\sum_{i=1}^3\ri^i_{\kappa+1}\p_{\ri^i_\kappa},\quad
\mathscr D_t:=\p_t-\sum_{\kappa=0}^\infty\sum_{i=1}^3\mathscr D_x^\kappa(V^i\ri^i_1)\p_{\ri^i_\kappa}.
\]
We also define the commuting operators
$\mathscr A:={\rm e}^{\ri^2-\ri^1}\mathscr D_x$ and
$\mathscr B:=\mathscr D_t+(\ri^1+\ri^2)\mathscr D_x$,
$\mathscr A\mathscr B=\mathscr B\mathscr A$.

It is convenient to introduce the modified coordinates
$t$, $x$, $r^j_\kappa=\ri^j_\kappa$ and $\omega^\kappa:=\mathscr A^\kappa\ri^3$
for $\kappa\in\mathbb N_0$ and $j=1,2$
on the manifold~$\mathcal S^{(\infty)}$
instead of the standard ones.%
\footnote{%
The operator~$\mathscr A$ and the modified coordinates
are related to the degeneration of $V^3$ meaning, that $V^3_{\ri^3}=0$; cf.\ \cite[Theorem 5.2]{Doyle1994}.
}
In this notation, we have
\begin{gather*}
\mathscr A\omega^\kappa=\omega^{\kappa+1},\quad \mathscr B\omega^\kappa=0,\quad \kappa\in\mathbb N_0, \quad
\mathscr Br^1=-r^1_1,\quad \mathscr Br^2=r^2_1,
\\
\mathscr D_x=\p_x+\sum_{\kappa=0}^\infty\big(
 r^1_{\kappa+1}\p_{r^1_\kappa}
+r^2_{\kappa+1}\p_{r^2_\kappa}
+{\rm e}^{r^1-r^2}\omega^{\kappa+1}\p_{\omega^\kappa}\big),
\\
\mathscr D_t=\p_t-\sum_{\kappa=0}^\infty\big(
 \mathscr D_x^\kappa(V^1r^1_1)\p_{r^1_\kappa}
+\mathscr D_x^\kappa(V^2r^2_1)\p_{r^2_\kappa}
+(r^1+r^2){\rm e}^{r^1-r^2}\omega^{\kappa+1}\p_{\omega^\kappa}\big).
\end{gather*}
We define the orders $\ord_{r^j}f$, $j=1,2$, and $\ord_\omega f$
of a differential function~$f=f[\ri]$ with respect to~$r^j$ and ``$\omega$''
to be equal $\max\{\kappa\mid f_{r^j_\kappa}\ne0\}$ and $\max\{\kappa\mid f_{\omega^\kappa}\ne0\}$,
respectively, unless the corresponding set is empty and $-\infty$ otherwise.
Note that  $\ord_\omega f=\ord_{\ri^3}f$.
The notation like $f[r^1,r^2]$, or equivalently $f[\ri^1,\ri^2]$,
denotes a differential function~$f$ of~$(r^1,r^2)=(\ri^1,\ri^2)$.

\begin{lemma}\label{lem:IDFM:DiffFunctionsOfOmega}
A differential function~$f=f[\ri]$ satisfies the equation $\mathscr Bf=0$ if and only if
it is a smooth function of a finite number of~$\omega$'s,
$f=f(\omega^0,\dots,\omega^\kappa)$ with $\kappa\in\mathbb N_0$.
\end{lemma}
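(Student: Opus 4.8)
The plan is to prove the nontrivial (``only if'') implication by exploiting the fact that $\mathscr B$ acts triangularly on the modified jet coordinates, raising the $r^j$-order by exactly one; the converse implication is immediate, since for $f=f(\omega^0,\dots,\omega^\kappa)$ the chain rule together with $\mathscr B\omega^i=0$ gives $\mathscr Bf=\sum_{i=0}^\kappa f_{\omega^i}\mathscr B\omega^i=0$.

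First I would compute how $\mathscr B$ acts on the modified coordinates. From the explicit formulas for $\mathscr D_t$ and $\mathscr D_x$ one reads off $\mathscr Bt=1$, $\mathscr Bx=r^1+r^2$ and $\mathscr B\omega^\kappa=0$, while the Leibniz rule for $\mathscr D_x$ applied to $\mathscr D_x^\kappa(V^jr^j_1)$ yields
\[
\mathscr Br^1_\kappa=-r^1_{\kappa+1}+P^1_\kappa,\qquad
\mathscr Br^2_\kappa=r^2_{\kappa+1}+P^2_\kappa,\qquad\kappa\in\mathbb N_0,
\]
where each $P^j_\kappa$ is a polynomial in $r^1_1,\dots,r^1_\kappa,r^2_1,\dots,r^2_\kappa$, so in particular $\ord_{r^1}P^j_\kappa\le\kappa$ and $\ord_{r^2}P^j_\kappa\le\kappa$. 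The structural point I want from this computation is that the coordinate $r^1_{\kappa+1}$ enters the collection $\{\mathscr Bt,\mathscr Bx,\mathscr B\omega^\mu,\mathscr Br^1_\mu,\mathscr Br^2_\mu\}$ only through the single leading term $-r^1_{\kappa+1}$ of $\mathscr Br^1_\kappa$, and symmetrically for $r^2_{\kappa+1}$.

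Next, let $f=f[\ri]$ satisfy $\mathscr Bf=0$ and suppose, towards a contradiction, that $f$ depends on at least one jet variable $r^1_\kappa$ or $r^2_\kappa$; put $N:=\max(\ord_{r^1}f,\ord_{r^2}f)\ge0$ and assume without loss of generality $\ord_{r^1}f=N$. Expanding
\[
\mathscr Bf=f_t+(r^1+r^2)f_x+\sum_\kappa f_{r^1_\kappa}\mathscr Br^1_\kappa+\sum_\kappa f_{r^2_\kappa}\mathscr Br^2_\kappa=0
\]
and applying $\p_{r^1_{N+1}}$ to it, every term vanishes except the one coming from $\kappa=N$ in the third sum: indeed $f_t$, $f_x$, $f_{r^1_\kappa}$ and $f_{r^2_\kappa}$ all have order at most $N$ in $r^1$ (they are partial derivatives of, or equal to, $f$), $\ord_{r^1}P^j_\kappa\le\kappa\le N$, and $r^1=r^1_0$ does not involve $r^1_{N+1}$. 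Hence $\p_{r^1_{N+1}}(\mathscr Bf)=-f_{r^1_N}=0$, contradicting $\ord_{r^1}f=N$. Therefore $f$ depends on none of the $r^j_\kappa$, so $f=f(t,x,\omega^0,\dots,\omega^l)$ for some $l\in\mathbb N_0$. For such $f$ the equation $\mathscr Bf=0$ collapses to $f_t+(r^1+r^2)f_x=0$; since $r^1=r^1_0$ is a coordinate on $\mathcal S^{(\infty)}$ not among the arguments of $f_t$ and $f_x$, applying $\p_{r^1_0}$ gives $f_x=0$ and then $f_t=0$, so $f=f(\omega^0,\dots,\omega^l)$, as claimed.

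The one step that I expect to need genuine care is the bookkeeping in the previous paragraph: one must make sure that no summand of $\mathscr Bf$ other than $f_{r^1_N}\mathscr Br^1_N$ can produce the variable $r^1_{N+1}$. This is precisely what the order estimate $\ord_{r^j}P^{j'}_\kappa\le\kappa$ for the correction terms and the inheritance of the $r^1$-order by all the coefficient functions $f_{(\cdot)}$ guarantee, so once the action of $\mathscr B$ on the coordinates is written out carefully the argument goes through without further subtleties.
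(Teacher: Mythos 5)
Your proof is correct and follows essentially the same route as the paper's: kill the dependence on the $r^j_\kappa$ by extracting the coefficient of the top-order jet variable $r^1_{N+1}$ in $\mathscr Bf=0$, then split the residual equation $f_t+(r^1+r^2)f_x=0$ with respect to $r^1$. Your version is merely more explicit about the triangular action of $\mathscr B$ on the modified coordinates (and, by taking $N=\max(\ord_{r^1}f,\ord_{r^2}f)$, makes precise why no other summand can contribute $r^1_{N+1}$), which is a careful spelling-out of the paper's one-line "collecting coefficients" step rather than a different argument.
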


\begin{proof}
Provided~$f$ being a smooth function of a finite number of~$\omega$'s,
it satisfies the equation $\mathscr Bf=0$
because of $\mathscr B\omega^\kappa=0$ for all $\kappa\in\mathbb N_0$.

Conversely, using the modified coordinates on~$\mathcal S^{(\infty)}$ we denote $\kappa_j=\ord_{r^j}f$, $j=1,2$.
Suppose that $\kappa_j\geqslant0$ for some~$j$.
Then collecting coefficients of~$r^j_{\kappa_j+1}$ in the equation $\mathscr Bf=0$
yields $\p f/\p{r^j_{\kappa_j}}=0$, which gives a contradiction.
Hence the function~$f$ does not depend on~$r^j_\kappa$, $\kappa\in\mathbb N_0$.
The equation $\mathscr Bf=0$ takes the form $f_t+(r^1+r^2)f_x=0$,
splitting with respect to $(r^1,r^2)$ to $f_t=f_x=0$.
\end{proof}

As the standard coordinates on the manifold~$\mathcal K^{(\infty)}$
associated with the system~\eqref{eq:IDFMSupersystem},
we can take the jet variables~$y$, $z$,
$q_\iota=\p^\iota q/\p y^\iota$ if $\iota\geqslant0$
and $q_{\iota}=\p^{-\iota} q/\p z^{-\iota}$ if $\iota<0$, $\iota\in\mathbb Z$,
$s_\kappa=\p^\kappa s/\p y^\kappa$, $\kappa\in\mathbb N_0$.
In these coordinates, the restrictions of the total derivative operators
with respect to~$y$ and~$z$ respectively take the form
\[
\mathscr D_y=\p_y+\sum_{\iota=-\infty}^{+\infty}q_{\iota+1}\p_{q_\iota}+\sum_{\kappa=0}^{+\infty}s_{\kappa+1}\p_{s_\kappa},\quad
\mathscr D_z=\p_z+\sum_{\iota=-\infty}^{+\infty}q_{\iota-1}\p_{q_\iota}+\sum_{\kappa=0}^{+\infty}\mathscr D_y^\kappa\left(\frac{K^1}{K^2}s_1\right)\p_{s_\kappa},
\]
where
$K^1:=q_{-2}-2q_{-1}+q_0$,
$K^2:=q_1+q_{-1}-2q_0$.
The infinite prolongation of the transformation~\eqref{eq:IDFMTransReducingToKGEq}
induces pushing forward of the operators~$\mathscr D_t$, $\mathscr D_x$, $\mathscr A$ and $\mathscr B$ to the operators
\begin{gather*}
\hat{\mathscr D}_t=-\frac{{\rm e}^{y+z}}{K^2}(2y-2z+1)\mathscr D_y-\frac{{\rm e}^{y+z}}{K^1}(2y-2z-1)\mathscr D_z,\quad
\hat{\mathscr D}_x=\frac{{\rm e}^{y+z}}{K^2}\mathscr D_y+\frac{{\rm e}^{y+z}}{K^1}\mathscr D_z,\\
\hat{\mathscr A}=\frac{{\rm e}^{-y-z}}{K^2}\mathscr D_y+\frac{{\rm e}^{-y-z}}{K^1}\mathscr D_z,\quad
\hat{\mathscr B}=-\frac{{\rm e}^{y+z}}{K^2}\mathscr D_y+\frac{{\rm e}^{y+z}}{K^1}\mathscr D_z,
\end{gather*}
and thus $\hat{\mathscr A}\hat{\mathscr B}=\hat{\mathscr B}\hat{\mathscr A}$.

A~symbol with~$[q,s]$, like $f[q,s]$, denotes a differential function of~$(q,s)$
that depends at most on~$y$, $z$ and a finite, but unspecified number of $q_\iota$, $\iota\in\mathbb Z$, and $s_\kappa$, $\kappa\in\mathbb N_0$.
The order~$\ord_sf$ of a differential function~$f=f[q,s]$ with respect to~$s$ is defined
to be equal $\smash{\max\{\kappa\in\mathbb N_0\mid f_{s_\kappa}\ne0\}}$ unless this set is empty and $-\infty$ otherwise.
Analogously, a symbol with~$[q]$, like $f[q]$, denotes a differential function of~$q$
that depends at most on~$y$, $z$ and a finite, but unspecified number of $q_\iota$, $\iota\in\mathbb Z$.

We also use the modified coordinates~$y$, $z$,
$\hat q_\iota=q_\iota$, $\iota\in\mathbb Z$
and $\hat\omega^\kappa=\hat{\mathscr A}^\kappa s$, $\kappa\in\mathbb N_0$,
on the manifold~$\mathcal K^{(\infty)}$.

Lemma~\ref{lem:IDFM:DiffFunctionsOfOmega} implies the following assertion.

\begin{corollary}\label{cor:IDFM:DiffFunctionsOfOmegaHat}
A differential function~$f=f[q,s]$ satisfies the equation $\hat{\mathscr B}f=0$,
i.e., \[K^1\mathscr D_yf=K^2\mathscr D_zf,\]
if and only if
it is a smooth function of a finite number of~$\hat\omega$'s,
$f=f(\hat\omega^0,\dots,\hat\omega^\kappa)$ with $\kappa\in\mathbb N_0$.
\end{corollary}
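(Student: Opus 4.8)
The plan is to deduce this corollary from Lemma~\ref{lem:IDFM:DiffFunctionsOfOmega} by transporting the statement along the infinite prolongation of the point transformation~\eqref{eq:IDFMTransReducingToKGEq}, so that essentially nothing new has to be proved. First I would record the easy direction: if $f=f(\hat\omega^0,\dots,\hat\omega^\kappa)$, then $\hat{\mathscr B}f=0$ follows from the chain rule together with the identities $\hat{\mathscr B}\hat\omega^\mu=\hat{\mathscr B}\hat{\mathscr A}^\mu s=\hat{\mathscr A}^\mu\hat{\mathscr B}s=0$, where the middle equality uses the commutativity $\hat{\mathscr A}\hat{\mathscr B}=\hat{\mathscr B}\hat{\mathscr A}$ established in Section~\ref{sec:IDFMPreliminaries} and $\hat{\mathscr B}s=0$ is immediate from the explicit form of~$\hat{\mathscr B}$ since $s$ is one of the standard jet coordinates on~$\mathcal K^{(\infty)}$ not differentiated by $\mathscr D_y$ or $\mathscr D_z$ beyond what already appears. (Equivalently, $\hat{\mathscr B}s=0$ is exactly the equation $K^1\mathscr D_ys=K^2\mathscr D_zs$, i.e.\ equation~\eqref{eq:IDFMSupersystemB} on $\mathcal K^{(\infty)}$.) Here one should note that the displayed equation $K^1\mathscr D_yf=K^2\mathscr D_zf$ is literally the statement $\hat{\mathscr B}f=0$ multiplied by $-K^1K^2{\rm e}^{-y-z}$, which is a nonvanishing factor on the relevant domain, so the two formulations coincide.

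For the converse, the key observation is that the transformation~\eqref{eq:IDFMTransReducingToKGEq} is a point transformation between (the relevant open subsets of) the solution manifolds~$\mathcal S^{(\infty)}$ and~$\mathcal K^{(\infty)}$, and its prolongation pushes $\mathscr B$ forward to $\hat{\mathscr B}$ and pushes the modified coordinates $\omega^\kappa=\mathscr A^\kappa\ri^3$ forward to $\hat\omega^\kappa=\hat{\mathscr A}^\kappa s$ (since $\mathscr A\mapsto\hat{\mathscr A}$ and $\ri^3=s$). Consequently, if $f=f[q,s]$ satisfies $\hat{\mathscr B}f=0$, then pulling $f$ back through~$\hat{\mathcal T}$ (i.e.\ expressing it in the variables $(t,x,\ri)$) yields a differential function $\tilde f=\tilde f[\ri]$ with $\mathscr B\tilde f=0$; by Lemma~\ref{lem:IDFM:DiffFunctionsOfOmega}, $\tilde f$ is a smooth function of finitely many $\omega$'s, and pushing back forward gives that $f$ is a smooth function of finitely many $\hat\omega$'s. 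Finally $\ord_\omega f=\ord_{\ri^3}f$ was already noted in the preliminaries, and the analogous finiteness of the number of $\hat\omega$'s involved is automatic since $f$ is a differential function.

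The one point requiring a little care — and the step I expect to be the main (mild) obstacle — is making the ``transport along the prolonged point transformation'' rigorous as a statement about differential functions rather than about geometric objects, i.e.\ checking that the change of coordinates between $\{y,z,q_\iota,s_\kappa\}$ and $\{t,x,\ri^i_\kappa\}$ on the respective infinite jets is well defined in both directions on the domain where the nondegeneracy condition $K^1K^2\neq0$ (equivalently $\ri^1_x\ri^2_x\neq0$) holds, as discussed around~\eqref{eq:IDFMInverseToTransReducingToKGEq}, and that it indeed intertwines $\mathscr B$ with $\hat{\mathscr B}$ and $\omega^\kappa$ with $\hat\omega^\kappa$. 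The intertwining of the operators is exactly what the displayed push-forward formulas for $\hat{\mathscr D}_t,\hat{\mathscr D}_x,\hat{\mathscr A},\hat{\mathscr B}$ in Section~\ref{sec:IDFMPreliminaries} assert, and $\hat{\mathscr A}^\kappa s$ is by definition $\hat\omega^\kappa$, so once the coordinate change is granted the argument is a one-line reduction; I would therefore present the proof essentially as ``apply Lemma~\ref{lem:IDFM:DiffFunctionsOfOmega} in the variables obtained via~$\hat{\mathcal T}$'', spelling out only the easy direction in full.
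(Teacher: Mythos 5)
Your proposal is correct and follows exactly the route the paper intends: the paper gives no separate proof, merely stating that Lemma~\ref{lem:IDFM:DiffFunctionsOfOmega} implies the corollary, and your transport of that lemma along the prolonged transformation~\eqref{eq:IDFMTransReducingToKGEq} (which by construction sends $\mathscr A,\mathscr B,\omega^\kappa$ to $\hat{\mathscr A},\hat{\mathscr B},\hat\omega^\kappa$) is precisely that implication spelled out. The only nitpick is your first justification of $\hat{\mathscr B}s=0$ (``$s$ is not differentiated beyond what already appears''), which is muddled, but the parenthetical identification with equation~\eqref{eq:IDFMSupersystemB} on $\mathcal K^{(\infty)}$ is the correct argument.
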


The infinite prolongation of the transformation~\eqref{eq:IDFMInverseToTransReducingToKGEq}
induces pushing forward of the operators~$\mathscr D_y$ and~$\mathscr D_z$ to the (commuting) operators
\[
\tilde{\mathscr D}_y:=-\frac1{\ri^1_x}\big(\mathscr D_t+(\ri^1+\ri^2-1)\mathscr D_x\big),\quad
\tilde{\mathscr D}_z:=-\frac1{\ri^2_x}\big(\mathscr D_t+(\ri^1+\ri^2+1)\mathscr D_x\big).
\]

\section{Generalized symmetries}\label{sec:IDFMGenSyms}

The following two facts allow us to exhaustively describe generalized symmetries of the system~\eqref{eq:IDFMDiagonalizedSystem}.
Firstly, the equation~\eqref{eq:IDFMDEq3} is partially coupled with the equations~\eqref{eq:IDFMDEq1} and~\eqref{eq:IDFMDEq2}.
Secondly, the subsystem~\eqref{eq:IDFMDEq1}--\eqref{eq:IDFMDEq2} is linearized by the hodograph transformation,
and the associated linear system reduces to the (1+1)-dimensional Klein--Gordon equation.

We denote by
$\Sigma$ the algebra of generalized symmetries of the system~\eqref{eq:IDFMDiagonalizedSystem},
and by $\Sigma^{\rm triv}$ the algebra of its trivial generalized symmetries,
whose characteristics vanish on solutions of~\eqref{eq:IDFMDiagonalizedSystem}.
The quotient algebra $\Sigma^{\rm q}=\Sigma/\Sigma^{\rm triv}$
can be identified, e.g., with the subalgebra of canonical representatives in the reduced evolutionary form,
$\hat\Sigma^{\rm q}=\big\{\sum_{i=1}^3\eta^i[\ri]\p_{\ri^i}\in\Sigma\big\}$.
The criterion of invariance of the system~\eqref{eq:IDFMDiagonalizedSystem}
with respect to the generalized vector field~$\sum_{i=1}^3\eta^i[\ri]\p_{\ri^i}$ results in
the system of three determining equations for the components~$\eta^i$,
\begin{subequations}\label{eq:DetEqsForSigmaq}
\begin{gather}
\mathscr D_t\eta^1+(\ri^1+\ri^2+1)\mathscr D_x\eta^1+\ri^1_x(\eta^1+\eta^2)=0,\label{eq:DetEqsForSigmaqA}\\
\mathscr D_t\eta^2+(\ri^1+\ri^2-1)\mathscr D_x\eta^2+\ri^2_x(\eta^1+\eta^2)=0,\label{eq:DetEqsForSigmaqB}\\
\mathscr D_t\eta^3+(\ri^1+\ri^2)\mathscr D_x\eta^3+\ri^3_x(\eta^1+\eta^2)=0.\label{eq:DetEqsForSigmaqC}
\end{gather}
\end{subequations}

\begin{lemma}\label{lem:IDFM:GenSym12}
For any generalized vector field~$\sum_{i=1}^3\eta^i[\ri]\p_{\ri^i}$ from~$\hat\Sigma^{\rm q}$,
its components~$\eta^1$ and~$\eta^2$ do not depend on derivatives of~$\ri^3$,
i.e., $\eta^1=\eta^1[\ri^1,\ri^2]$ and $\eta^2=\eta^2[\ri^1,\ri^2]$.
\end{lemma}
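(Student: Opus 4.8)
The plan is to exploit the structure of the determining equations~\eqref{eq:DetEqsForSigmaqA}--\eqref{eq:DetEqsForSigmaqB}, noting that they involve~$\eta^3$ only through~$\eta^1+\eta^2$, which in turn appears in all three equations. Working in the modified coordinates $(t,x,r^1_\kappa,r^2_\kappa,\omega^\kappa)$ on~$\mathcal S^{(\infty)}$, I would set $\mu:=\ord_\omega\eta^1$ and $\nu:=\ord_\omega\eta^2$, and argue by contradiction that $\mu<0$ and $\nu<0$, i.e.\ that neither $\eta^1$ nor $\eta^2$ depends on any~$\omega^\kappa$. First I would record how $\mathscr D_t$ and $\mathscr D_x$ act on a function of the $\omega$'s: from the formulas in Section~\ref{sec:IDFMPreliminaries}, $\mathscr D_x$ raises the $\omega$-order by one (with coefficient ${\rm e}^{r^1-r^2}$ on the top term) and $\mathscr D_t$ also raises it by one (with coefficient $-(r^1+r^2){\rm e}^{r^1-r^2}$ on the top term).

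The key step is then a highest-order analysis in $\omega$. Suppose $M:=\max(\mu,\nu)\geqslant0$. Then in equation~\eqref{eq:DetEqsForSigmaqA} the term $\mathscr D_t\eta^1+(r^1+r^2+1)\mathscr D_x\eta^1$ has $\omega$-order at most $\mu+1$, with the $\omega^{\mu+1}$-coefficient equal to $\big(-(r^1+r^2)+r^1+r^2+1\big){\rm e}^{r^1-r^2}\,\eta^1_{\omega^\mu}={\rm e}^{r^1-r^2}\,\eta^1_{\omega^\mu}$; likewise the analogous block in~\eqref{eq:DetEqsForSigmaqB} contributes ${\rm e}^{r^1-r^2}\,\eta^2_{\omega^\nu}$ to $\omega^{\nu+1}$. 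Meanwhile the coupling terms $r^1_x(\eta^1+\eta^2)$ and $r^2_x(\eta^1+\eta^2)$ have $\omega$-order exactly $M$. Comparing: if $\mu>\nu$ then the $\omega^{\mu+1}$-coefficient of~\eqref{eq:DetEqsForSigmaqA} gives $\eta^1_{\omega^\mu}=0$, contradiction; if $\nu>\mu$, symmetrically $\eta^2_{\omega^\nu}=0$; and if $\mu=\nu=M\geqslant0$, then reading off $\omega^{M+1}$ in~\eqref{eq:DetEqsForSigmaqA} and~\eqref{eq:DetEqsForSigmaqB} yields $\eta^1_{\omega^M}=0$ and $\eta^2_{\omega^M}=0$ directly (since the coupling terms only reach order $M$, not $M+1$). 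In every case we get a contradiction, so $\mu=\nu=-\infty$, i.e.\ $\eta^1$ and $\eta^2$ are free of all $\omega^\kappa$; equivalently, by the remark $\ord_\omega f=\ord_{\ri^3}f$, they are free of all derivatives of~$\ri^3$, giving $\eta^1=\eta^1[\ri^1,\ri^2]$ and $\eta^2=\eta^2[\ri^1,\ri^2]$.

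I expect the main technical obstacle to be bookkeeping the interaction between the two ``top-order'' mechanisms: the $\mathscr D_t$-and-$\mathscr D_x$ block, which bumps the $\omega$-order up by one, versus the purely multiplicative coupling block $r^j_x(\eta^1+\eta^2)$, which preserves the $\omega$-order. Care is needed because in the balanced case $\mu=\nu=M$ the coupling term lives at order $M$ while the differential terms live at order $M+1$, so one must first clean out order $M+1$ (which kills the top $\omega$-dependence of $\eta^1,\eta^2$ separately) rather than naively equating the two blocks. A secondary point to check is that one may indeed take representatives with $\eta^i=\eta^i[\ri]$ depending only on $x$-derivatives of~$\ri$, which is exactly what was arranged in Section~\ref{sec:IDFMPreliminaries}; so the argument is carried out genuinely on $\mathcal S^{(\infty)}$ in the modified coordinates, where the actions of $\mathscr D_x$ and $\mathscr D_t$ on the $\omega$'s are the explicit ones displayed there.
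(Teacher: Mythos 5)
Your proposal is correct and follows essentially the same route as the paper: the paper's proof also extracts the coefficient of the top-order jet variable ($\ri^3_{\kappa_j+1}$ in standard coordinates, which is equivalent to your $\omega^{\mu+1}$ analysis since $\ord_\omega f=\ord_{\ri^3}f$) from the $j$th determining equation, obtaining $(V^j-V^3)\,\p\eta^j/\p\ri^3_{\kappa_j}=\pm\,\p\eta^j/\p\ri^3_{\kappa_j}=0$ and hence a contradiction. Your explicit case split on $\mu\gtrless\nu$ is a slightly more careful write-up of the same highest-order argument (the paper implicitly starts from the component of maximal order so that the coupling term cannot interfere).
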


\begin{proof}
Suppose that~$\kappa_j:=\ord_{\ri^3}\eta^j\geqslant0$ for some~$j\in\{1,2\}$.
Collecting the coefficients of the jet variable~$\ri^3_{\kappa_j+1}$ in the $j$th equation of~\eqref{eq:DetEqsForSigmaq}
yields the equation $\p\eta^j/\p\ri^3_{\kappa_j}=0$,
which contradicts the assumption. Hence $\kappa_j=-\infty$ for any $j=1,2$.
\end{proof}

Lemma~\ref{lem:IDFM:GenSym12} is the manifestation of partial coupling of the system~\eqref{eq:IDFMDiagonalizedSystem}.
In view of this lemma, the subalgebra~$\hat\Sigma^{\rm q}_3$ of~$\hat\Sigma^{\rm q}$
constituted by elements with vanishing~$\eta^1$ and~$\eta^2$ is an ideal of~$\hat\Sigma^{\rm q}$,
and the quotient algebra $\Sigma^{\rm q}_{12}:=\hat\Sigma^{\rm q}/\hat\Sigma^{\rm q}_3$
is isomorphic to the subalgebra of reduced generalized symmetries of the subsystem~\eqref{eq:IDFMDEq1}--\eqref{eq:IDFMDEq2}
that admit local prolongations to~$\ri^3$.

The ideal~$\hat\Sigma^{\rm q}_3$ is described by the following corollary of Lemma~\ref{lem:IDFM:DiffFunctionsOfOmega}.

\begin{corollary}\label{cor:IDFM:GenSym3}
A generalized vector field~$\eta^3\p_{\ri^3}$ belongs to~$\hat\Sigma^{\rm q}$ if and only if the coefficient~$\eta^3$
is a smooth function of a finite number of~$\omega$'s.
\end{corollary}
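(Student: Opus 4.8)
The plan is to reduce the determining equations~\eqref{eq:DetEqsForSigmaq} to a single condition on $\eta^3$ and then apply Lemma~\ref{lem:IDFM:DiffFunctionsOfOmega}. When $\eta^1=\eta^2=0$, equations~\eqref{eq:DetEqsForSigmaqA}--\eqref{eq:DetEqsForSigmaqB} are satisfied trivially, and~\eqref{eq:DetEqsForSigmaqC} collapses to $\mathscr D_t\eta^3+(\ri^1+\ri^2)\mathscr D_x\eta^3=0$, i.e.\ exactly $\mathscr B\eta^3=0$ in the notation of Section~\ref{sec:IDFMPreliminaries}. Conversely, any $\eta^3$ with $\mathscr B\eta^3=0$ gives a generalized vector field $\eta^3\p_{\ri^3}$ satisfying all three determining equations, hence an element of~$\hat\Sigma^{\rm q}$. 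So the content of the corollary is precisely the characterization $\mathscr B\eta^3=0\iff \eta^3=\eta^3(\omega^0,\dots,\omega^\kappa)$, which is Lemma~\ref{lem:IDFM:DiffFunctionsOfOmega} verbatim.

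First I would note that, by the general setup of the section, a representative $\eta^3$ of the coset may be taken to be a differential function of the form $\eta^3=\eta^3[\ri]$ (no $t$-derivatives of $\ri$), so Lemma~\ref{lem:IDFM:DiffFunctionsOfOmega} applies directly in the modified coordinates $t,x,r^1_\kappa,r^2_\kappa,\omega^\kappa$. Then I would spell out the equivalence: for the "if" direction, $\mathscr B\omega^\kappa=0$ for all $\kappa$ (recorded in the preliminaries) together with $\mathscr B t=\mathscr B x$ evaluating correctly — actually one just needs that a function of the $\omega$'s alone is annihilated by $\mathscr B$ by the chain rule; for the "only if" direction one invokes Lemma~\ref{lem:IDFM:DiffFunctionsOfOmega}. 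The remaining point is to confirm that the determining system~\eqref{eq:DetEqsForSigmaq} with $\eta^1=\eta^2=0$ is equivalent to $\mathscr B\eta^3=0$ and nothing more, which is immediate once one writes $\mathscr B=\mathscr D_t+(\ri^1+\ri^2)\mathscr D_x$.

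There is no real obstacle here: the corollary is essentially a translation of Lemma~\ref{lem:IDFM:DiffFunctionsOfOmega} into the language of symmetries, once Lemma~\ref{lem:IDFM:GenSym12} has guaranteed that vector fields in the ideal $\hat\Sigma^{\rm q}_3$ are characterized purely by the third determining equation. The only thing requiring a word of care is the identification of the surviving determining equation with the operator~$\mathscr B$, and the observation that the coordinate change to $\omega^\kappa$ is exactly the one adapted to $\mathscr B$ (as the footnote after the definition of $\mathscr A$ already flags, via the degeneracy $V^3_{\ri^3}=0$). I would therefore keep the proof to two or three lines: reduce the determining equations, recognize $\mathscr B\eta^3=0$, and cite Lemma~\ref{lem:IDFM:DiffFunctionsOfOmega}.
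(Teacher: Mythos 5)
Your proposal is correct and follows exactly the paper's own argument: with $\eta^1=\eta^2=0$ the first two determining equations hold trivially, the third reduces to $\mathscr B\eta^3=0$, and Lemma~\ref{lem:IDFM:DiffFunctionsOfOmega} finishes the job. Nothing is missing.
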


\begin{proof}
The invariance of the system~\eqref{eq:IDFMDiagonalizedSystem} with respect to the generalized vector field~$\eta^3\p_{\ri^3}$
leads to the single determining equation $\mathscr B\eta^3=0$.
Further we use Lemma~\ref{lem:IDFM:DiffFunctionsOfOmega}.
\end{proof}

Therefore, the infinite prolongation of an element~$f\p_{\ri^3}$ of~$\hat\Sigma^{\rm q}$ is equal to
$\sum_{\iota=0}^\infty(\hat{\mathscr A}^\iota f)\p_{\omega^\iota}$,
and thus the commutator of elements~$f^1\p_{\ri^3}$ and~$f^2\p_{\ri^3}$ of~$\hat\Sigma^{\rm q}$ is
$\sum_{\iota=0}^\infty\big((\hat{\mathscr A}^\iota f^1)f^2_{\omega^\iota}-(\hat{\mathscr A}^\iota f^2)f^1_{\omega^\iota}\big)\p_{\ri^3}$,
where $\hat{\mathscr A}=\sum_{\kappa=0}^{\infty}\omega^{\kappa+1}\p_{\omega^\kappa}$.

We specify the form of canonical representatives of cosets of~$\hat\Sigma^{\rm q}_3$.

\begin{lemma}\label{lem:IDFM:GenSym12More}
Each coset of~$\hat\Sigma^{\rm q}_3$ contains a generalized vector field of the form
\begin{gather}\label{eq:RepresentationForElementsOfSigma12}
\eta^1[\ri^1,\ri^2]\p_{\ri^1}+\eta^2[\ri^1,\ri^2]\p_{\ri^2}+{\rm e}^{\ri^2-\ri^1}\ri^3_x\hat\eta^3[\ri^1,\ri^2]\p_{\ri^3},
\end{gather}
where the coefficients $\eta^1$, $\eta^2$ and $\hat\eta^3$ satisfy
the system of determining equations~\eqref{eq:DetEqsForSigmaqA}, \eqref{eq:DetEqsForSigmaqB} and
\begin{gather}\label{eq:DetEqsForSigma12C}
\mathscr D_t\hat\eta^3+(\ri^1+\ri^2)\mathscr D_x\hat\eta^3+{\rm e}^{\ri^1-\ri^2}(\eta^1+\eta^2)=0.
\end{gather}
\end{lemma}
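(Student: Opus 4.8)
The plan is to start from an arbitrary generalized vector field $\sum_{i=1}^3\eta^i[\ri]\p_{\ri^i}$ in $\hat\Sigma^{\rm q}$ and modify its $\ri^3$-component by adding an element of the ideal $\hat\Sigma^{\rm q}_3$ so as to bring it to the form~\eqref{eq:RepresentationForElementsOfSigma12}. By Lemma~\ref{lem:IDFM:GenSym12} we already know $\eta^1=\eta^1[\ri^1,\ri^2]$ and $\eta^2=\eta^2[\ri^1,\ri^2]$, so these components satisfy~\eqref{eq:DetEqsForSigmaqA}--\eqref{eq:DetEqsForSigmaqB} as stated. What remains is to analyze the third determining equation~\eqref{eq:DetEqsForSigmaqC}, namely $\mathscr B\eta^3+\ri^3_x(\eta^1+\eta^2)=0$ (using $\mathscr B=\mathscr D_t+(\ri^1+\ri^2)\mathscr D_x$), and to show that $\eta^3$ can be chosen of the special shape ${\rm e}^{\ri^2-\ri^1}\ri^3_x\,\hat\eta^3[\ri^1,\ri^2]$.

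First I would try the substitution $\eta^3 = {\rm e}^{\ri^2-\ri^1}\ri^3_x\,\hat\eta^3$ directly and compute $\mathscr B\eta^3$. Since $\mathscr B({\rm e}^{\ri^2-\ri^1}\ri^3_x)=\mathscr A(\mathscr B\ri^3)$ up to the correction coming from $\mathscr B$ not commuting with the scalar factor, I should instead use the modified coordinates: ${\rm e}^{\ri^2-\ri^1}\ri^3_x=\mathscr A\ri^3=\omega^1$, and $\mathscr B\omega^1=0$ by the relations listed before Lemma~\ref{lem:IDFM:DiffFunctionsOfOmega}. Hence $\mathscr B(\omega^1\hat\eta^3)=\omega^1\mathscr B\hat\eta^3$, and since $\hat\eta^3=\hat\eta^3[\ri^1,\ri^2]$ depends only on $r^1,r^2$-jets, $\mathscr B\hat\eta^3$ is expressed purely in terms of $t,x,r^1,r^2$-jets. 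Substituting into~\eqref{eq:DetEqsForSigmaqC} and dividing by $\omega^1={\rm e}^{\ri^2-\ri^1}\ri^3_x$ gives exactly equation~\eqref{eq:DetEqsForSigma12C}. So the representation~\eqref{eq:RepresentationForElementsOfSigma12} is consistent, and the real content is showing that \emph{every} coset admits such a representative.

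The key step — and the main obstacle — is the following existence/reduction argument for $\eta^3$. Given an arbitrary $\eta^3=\eta^3[\ri]$ solving~\eqref{eq:DetEqsForSigmaqC}, I want to produce $\tilde\eta^3$ differing from $\eta^3$ by a solution of $\mathscr B(\cdot)=0$ (i.e.\ an element of $\hat\Sigma^{\rm q}_3$, which by Corollary~\ref{cor:IDFM:GenSym3} is a function of finitely many $\omega$'s) such that $\tilde\eta^3=\omega^1\hat\eta^3[\ri^1,\ri^2]$. The approach is to pass to the modified coordinates $t,x,r^1_\kappa,r^2_\kappa,\omega^\kappa$ and expand $\eta^3$ in the $\omega$-variables. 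From~\eqref{eq:DetEqsForSigmaqC} in these coordinates, $\mathscr B\eta^3=-\omega^1(\eta^1+\eta^2)$, where the right-hand side is linear in $\omega^1$ with $\omega$-independent coefficient. Since $\mathscr B$ acts on the $\omega$'s only through $\mathscr B\omega^\kappa=0$, it sends $\omega$-polynomial parts to $\omega$-polynomial parts of the same multidegree; comparing $\omega$-homogeneous components forces all components of $\eta^3$ of $\omega$-degree $\ne 1$ to be $\mathscr B$-closed (hence removable by subtracting an element of $\hat\Sigma^{\rm q}_3$, after checking such components are themselves functions of the $\omega$'s — here I would invoke an argument parallel to the proof of Lemma~\ref{lem:IDFM:DiffFunctionsOfOmega} to kill any $r^j$-dependence), while the $\omega$-degree-one component must be of the form $\sum_{\kappa}\omega^\kappa g_\kappa[\ri^1,\ri^2]$ with the $\mathscr B$-equation relating the $g_\kappa$. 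Using $\mathscr A\omega^\kappa=\omega^{\kappa+1}$ one rewrites $\sum_\kappa\omega^\kappa g_\kappa$ modulo $\hat\Sigma^{\rm q}_3$ as $\omega^1\hat\eta^3$ for a single $\hat\eta^3[\ri^1,\ri^2]$; the higher $\omega^\kappa$ ($\kappa\ge 2$) terms and the $\omega^0$ term either combine into $\mathscr B$-closed pieces or are absorbed into the ideal. The bookkeeping of which $\omega$-degrees survive, and verifying that the leftover pieces are genuinely in $\hat\Sigma^{\rm q}_3$ rather than merely $\mathscr B$-closed on part of the jet space, is where the care is needed; everything else is a direct substitution once the coordinate change to $\omega$'s has been set up.
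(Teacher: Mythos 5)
Your setup is the same as the paper's: invoke Lemma~\ref{lem:IDFM:GenSym12} for $\eta^1,\eta^2$, use Corollary~\ref{cor:IDFM:GenSym3} to identify the ideal with functions of the $\omega$'s, and check that the ansatz $\eta^3=\omega^1\hat\eta^3$ turns~\eqref{eq:DetEqsForSigmaqC} into~\eqref{eq:DetEqsForSigma12C} because $\mathscr B\omega^1=0$. That verification direction is fine. The problem is the existence direction, which is the actual content of the lemma, and there your argument has a gap: you propose to ``expand $\eta^3$ in the $\omega$-variables'' and compare $\omega$-homogeneous components, but a differential function is merely smooth in the jet coordinates, not polynomial in the $\omega$'s, so the decomposition you are comparing does not exist in general. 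The statement ``components of $\omega$-degree $\ne1$ are $\mathscr B$-closed'' and the absorption of the degree-one tail $\sum_\kappa\omega^\kappa g_\kappa$ into $\omega^1\hat\eta^3$ plus the ideal are exactly the points you defer to ``bookkeeping,'' and as written they are not justified. (For the latter, the correct mechanism is not $\mathscr A\omega^\kappa=\omega^{\kappa+1}$ but Lemma~\ref{lem:IDFM:DiffFunctionsOfOmega}: $\mathscr Bg_\kappa=0$ with $g_\kappa=g_\kappa[\ri^1,\ri^2]$ forces $g_\kappa$ to be constant, whence $\omega^\kappa g_\kappa$ lies in the ideal.)

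The paper avoids all of this with a two-line device you should adopt. Apply the division to the \emph{given} representative, not only to the ansatz: set $\hat\eta^3:=\eta^3/\omega^1$ (locally, where $\ri^3_x\ne0$). Since $\mathscr B\omega^1=0$, equation~\eqref{eq:DetEqsForSigmaqC} becomes $\mathscr B\hat\eta^3=-{\rm e}^{\ri^1-\ri^2}(\eta^1+\eta^2)$, whose right-hand side involves no $\omega$'s. In the modified coordinates $\mathscr B$ contains no $\p_{\omega^\kappa}$ and its coefficients are $\omega$-free, so freezing the $\omega$-arguments of $\hat\eta^3$ at fixed values produces a function $\hat\eta^3_{\rm fixed}[\ri^1,\ri^2]$ that still solves~\eqref{eq:DetEqsForSigma12C} for the same $(\eta^1,\eta^2)$. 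Then $\mathscr B\bigl(\eta^3-\omega^1\hat\eta^3_{\rm fixed}\bigr)=0$, so by Lemma~\ref{lem:IDFM:DiffFunctionsOfOmega} the difference is a function of finitely many $\omega$'s, i.e.\ the corresponding vector field lies in $\hat\Sigma^{\rm q}_3$. This replaces your entire degree-by-degree analysis and closes the gap.
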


\begin{proof}
In view of Lemma~\ref{lem:IDFM:GenSym12} and Corollary~\ref{cor:IDFM:GenSym3},
it suffices to show that the third components of canonical representatives
for elements from the quotient algebra~\smash{$\Sigma^{\rm q}_{12}$}
can be chosen to be of the form
$\eta^3={\rm e}^{\ri^2-\ri^1}\ri^3_x\hat\eta^3[\ri^1,\ri^2]$.
After substituting the representation $\eta^3={\rm e}^{\ri^2-\ri^1}\ri^3_x\hat\eta^3[\ri]$
into the equation~\eqref{eq:DetEqsForSigmaqC}, we derive the equation~\eqref{eq:DetEqsForSigma12C}.
We use the modified coordinates on the manifold~$\mathcal S^{(\infty)}$.
If the coefficient~$\hat\eta^3$ depends on $\omega^\kappa$ for some~$\kappa\in\mathbb N_0$,
then a differential function of $(\ri^1,\ri^2)$ obtained from~$\hat\eta^3$
by fixing values of all involved $\omega^\kappa$'s in the domain of~$\hat\eta^3$
is also a solution of~\eqref{eq:DetEqsForSigma12C} for the same value of~$(\eta^1,\eta^2)$.
\end{proof}

The elements of the form~\eqref{eq:RepresentationForElementsOfSigma12} from the algebra~$\hat\Sigma^{\rm q}$
constitute a subalgebra of this algebra, which we denote by $\bar\Sigma^{\rm q}_{12}$.
Unfortunately, the algebras~$\Sigma^{\rm q}_{12}$ and~$\bar\Sigma^{\rm q}_{12}$ are not isomorphic.
Although $\hat\Sigma^{\rm q}=\bar\Sigma^{\rm q}_{12}+\hat\Sigma^{\rm q}_3$,
this sum is not direct since
$\bar\Sigma^{\rm q}_{12}\cap\hat\Sigma^{\rm q}_3=\langle {\rm e}^{\ri^2-\ri^1}\ri^3_x\p_{\ri^3}\rangle$.
The algebra $\Sigma^{\rm q}_{12}$ is naturally isomorphic
to the quotient algebra $\bar\Sigma^{\rm q}_{12}/\langle {\rm e}^{\ri^2-\ri^1}\ri^3_x\p_{\ri^3}\rangle$.

Deriving the exhaustive description of the algebra~$\Sigma^{\rm q}_{12}$ is quite complicated.
For this purpose, we reduce the system~\eqref{eq:IDFMDiagonalizedSystem}
to a system~\eqref{eq:IDFMSupersystem} containing the (1+1)-dimensional Klein--Gordon equation.
Similarly to the system~\eqref{eq:IDFMDiagonalizedSystem},
we denote by
$\mathfrak S$ the algebra of generalized symmetries of the system~\eqref{eq:IDFMSupersystem},
and by $\mathfrak S^{\rm triv}$ the algebra of its trivial generalized symmetries,
whose characteristics vanish on solutions of~\eqref{eq:IDFMSupersystem}.
The quotient algebra $\mathfrak S^{\rm q}=\mathfrak S/\mathfrak S^{\rm triv}$
can be identified, e.g., with the subalgebra of canonical representatives in the evolutionary form,
$\hat{\mathfrak S}^{\rm q}=\{\chi[q,s]\p_q+\theta[q,s]\p_s\in\mathfrak S\}$.
The Lie bracket on~$\hat{\mathfrak S}^{\rm q}$ is defined
as the modified Lie bracket of generalized vector fields in the jet space with the independent variables $(y,z)$
and the dependent variables $(q,s)$,
where all arising mixed derivatives of~$q$
and all arising derivatives of~$s$ that involve differentiation with respect to~$y$
are substituted in view of the system~\eqref{eq:IDFMSupersystem} and its differential consequences.
The system of determining equations for components of elements of~$\hat{\mathfrak S}^{\rm q}$ is
\begin{subequations}\label{eq:IDFMSupersystemGenSymsDetEqs}
\begin{gather}\label{eq:IDFMSupersystemGenSymsDetEqA}
\mathscr D_y\mathscr D_z\chi=\chi,
\\\label{eq:IDFMSupersystemGenSymsDetEqB}
s_1(\mathscr D_z-1)^2\chi+K_1\mathscr D_y\theta=\frac{K^1}{K^2}s_1(\mathscr D_y+\mathscr D_z-2)\chi+K^2\mathscr D_z\theta.
\end{gather}
\end{subequations}

The algebra~$\hat{\mathfrak S}^{\rm q}$ is isomorphic to the algebra~$\hat\Sigma^{\rm q}$.
This isomorphism is induced by the pushforward of $\Sigma$ onto~$\mathfrak S$
that is generated by the point transformation~\eqref{eq:IDFMTransReducingToKGEq},
excluding the derivatives of~$p$ (including $p$ itself)
in view of the equation~\eqref{eq:IDFMSupersystemC} and its differential consequences
and the successive projection of the obtained generalized vector fields
to the jet space with the independent variables $(y,z)$ and the dependent variables $(q,s)$.
To map $\mathfrak S$ into~$\Sigma$, we need to prolong the elements of~$\mathfrak S$ to~$p$
according the equation~\eqref{eq:IDFMSupersystemC}
and make the pushforward by the point transformation~\eqref{eq:IDFMInverseToTransReducingToKGEq}.

\begin{lemma}\label{lem:IDFM:GenSymsOfSupersystem1}
The $q$-component of every element of~$\hat{\mathfrak S}^{\rm q}$ does not depend on~$s$ and its derivatives.
\end{lemma}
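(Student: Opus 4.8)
The plan is to proceed exactly as in the proof of Lemma~\ref{lem:IDFM:GenSym12}, but now in the jet variables of the system~\eqref{eq:IDFMSupersystem}, exploiting the partial coupling already visible in the determining equations~\eqref{eq:IDFMSupersystemGenSymsDetEqs}: the first equation~\eqref{eq:IDFMSupersystemGenSymsDetEqA} for~$\chi$ is decoupled from~$\theta$ entirely, while~$\theta$ enters only the second equation~\eqref{eq:IDFMSupersystemGenSymsDetEqB}. So the only place where an unwanted dependence of~$\chi$ on~$s$ could be constrained is equation~\eqref{eq:IDFMSupersystemGenSymsDetEqA}; but that equation, being $\mathscr D_y\mathscr D_z\chi=\chi$ with the operators~$\mathscr D_y$ and~$\mathscr D_z$ acting on all jet variables including the $s_\kappa$, does involve the $s$-derivatives implicitly through~$\mathscr D_z$ (since $\mathscr D_z$ contains the terms $\mathscr D_y^\kappa(K^1 s_1/K^2)\p_{s_\kappa}$). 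The argument is therefore a splitting argument: assume $\chi$ depends nontrivially on some $s_\kappa$, take the highest such~$\kappa$, and derive a contradiction by collecting coefficients of a sufficiently high $s$-derivative in~\eqref{eq:IDFMSupersystemGenSymsDetEqA}.

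More concretely, first I would note that, by the remark in Section~\ref{sec:IDFMPreliminaries}, a canonical representative's $q$-component may be assumed to be a function $\chi=\chi[q,s]$ depending on $y$, $z$, finitely many $q_\iota$ ($\iota\in\mathbb Z$) and finitely many $s_\kappa$ ($\kappa\in\mathbb N_0$, i.e.\ only pure $y$-derivatives of~$s$). Suppose for contradiction that $m:=\ord_s\chi\geqslant0$. I would then compute $\mathscr D_z\chi$: the operator $\mathscr D_z$ replaces $q_\iota\mapsto q_{\iota-1}$ and, crucially, contributes $\sum_\kappa\mathscr D_y^\kappa(K^1 s_1/K^2)\,\chi_{s_\kappa}$, which involves $s_{m+1}$ (coming from $\mathscr D_y^m(K^1s_1/K^2)\,\chi_{s_m}$, whose leading $s$-term is $(K^1/K^2)\,s_{m+1}\chi_{s_m}$ plus lower-order-in-$s$ terms, since $K^1,K^2$ depend only on~$q$). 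Applying $\mathscr D_y$ to this — $\mathscr D_y$ being the operator that raises $q_\iota\mapsto q_{\iota+1}$, $s_\kappa\mapsto s_{\kappa+1}$ — produces a term containing $s_{m+2}$, namely $(K^1/K^2)\,s_{m+2}\chi_{s_m}$, together with terms of strictly lower order in the $s$-jet. Since the right-hand side of~\eqref{eq:IDFMSupersystemGenSymsDetEqA} is just $\chi$, which has $\ord_s\chi=m<m+2$, collecting the coefficient of $s_{m+2}$ in $\mathscr D_y\mathscr D_z\chi=\chi$ forces $(K^1/K^2)\,\chi_{s_m}=0$, hence $\chi_{s_m}=0$ (as $K^1/K^2$ is a nonzero differential function on $\mathcal K^{(\infty)}$), contradicting $\ord_s\chi=m$. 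Therefore $\ord_s\chi=-\infty$, i.e.\ $\chi=\chi[q]$.

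The main obstacle I anticipate is purely bookkeeping: one must be careful that the term $(K^1/K^2)\,s_{m+2}\chi_{s_m}$ genuinely appears with a nonzero coefficient and is not cancelled by some other contribution of the same $s$-order. The dangerous competitors are (i) the $q$-part of $\mathscr D_y\mathscr D_z$ acting on $\chi$, which produces no $s$-derivatives at all and hence cannot reach order $m+2$ in~$s$; (ii) cross terms where $\mathscr D_y$ hits a $q_\iota$-slot and $\mathscr D_z$ had hit an $s_\kappa$-slot, or vice versa — these raise the $s$-order by at most one from the order-$(m+1)$ intermediate expression only if $\mathscr D_y$ acts on an $s$-slot, and the only way to reach $s_{m+2}$ is through the $\chi_{s_m}$ channel described above; and (iii) the fact that $K^1,K^2$ are themselves differential functions of~$q$, so $\mathscr D_z(K^1/K^2)$ and $\mathscr D_y(K^1/K^2)$ contribute extra terms, but these involve only $q$-derivatives and multiply $s$-derivatives of order at most $m+1$, not $m+2$. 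Once this top-order coefficient is isolated and shown nonvanishing, the conclusion is immediate. (A slicker alternative, which I might use to avoid the computation entirely, is to transfer the problem through the isomorphism $\hat{\mathfrak S}^{\rm q}\cong\hat\Sigma^{\rm q}$ and Lemma~\ref{lem:IDFM:GenSym12}: the pushforward under~\eqref{eq:IDFMInverseToTransReducingToKGEq} sends the $s$-independent subalgebra to one with $\ri^3$-independent $\ri^1$- and $\ri^2$-components and back; but verifying that this correspondence respects the ``no $s$'' and ``no $\ri^3$'' conditions is itself a short computation, so the direct splitting argument above is likely the cleanest route.)
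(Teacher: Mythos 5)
Your proposal is correct and follows essentially the same route as the paper: assume $\kappa:=\ord_s\chi\geqslant0$, collect the coefficient of $s_{\kappa+2}$ in the determining equation $\mathscr D_y\mathscr D_z\chi=\chi$, and conclude $\chi_{s_\kappa}=0$ since the coefficient is $(K^1/K^2)\chi_{s_\kappa}$ with $K^1/K^2$ nonvanishing. Your careful bookkeeping of why no other term can reach $s$-order $\kappa+2$ is exactly the content the paper leaves implicit, and your observation that only the nonvanishing of $K^1$ and $K^2$ matters is precisely the paper's Remark~\ref{rem:IDFM:NnvanishingOfK1K2}.
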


\begin{proof}
Suppose that $Q=\chi\p_q+\theta\p_s\in\hat{\mathfrak S}^{\rm q}$, and $\kappa:=\ord_s\chi\geqslant0$.
Then invariance criterion for the equation~$q_{yz}=q$
and the generalized vector field~$Q$ implies,
after collecting coefficients of~$s_{\kappa+2}$,
the equation $\chi_{s_\kappa}=0$, which contradicts the assumption.
This is why $\ord_s\chi=-\infty$.
\end{proof}

\begin{remark}\label{rem:IDFM:NnvanishingOfK1K2}
Only simultaneously nonvanishing of~$K^1$ and~$K^2$ is essential for Lemma~\ref{lem:IDFM:GenSymsOfSupersystem1},
but not the specific form of these coefficients.
\end{remark}

Lemma~\ref{lem:IDFM:GenSymsOfSupersystem1} is the counterpart of Lemma~\ref{lem:IDFM:GenSym12}
for the system~\eqref{eq:IDFMSupersystem} and is the manifestation of partial coupling of this system.
In view of Lemma~\ref{lem:IDFM:GenSymsOfSupersystem1}, the subalgebra~$\hat{\mathfrak S}^{\rm q}_s$ of~$\hat{\mathfrak S}^{\rm q}$
constituted by elements with vanishing $q$-components is an ideal of~$\hat{\mathfrak S}^{\rm q}$.
In view of Corollary~\ref{cor:IDFM:DiffFunctionsOfOmegaHat} (or Corollary~\ref{cor:IDFM:GenSym3}),
this ideal consists of generalized vector fields of the form $\theta\p_s$,
where $\theta$ is a smooth function of a finite, but unspecified number of~$\hat\omega$'s.
Since the ideal~$\hat{\mathfrak S}^{\rm q}_s$ of~$\hat{\mathfrak S}^{\rm q}$ corresponds to
and is isomorphic to the ideal~$\hat\Sigma^{\rm q}_3$ of~$\hat\Sigma^{\rm q}$,
for our purpose it suffices to describe the quotient algebra
$\mathfrak S^{\rm q}_q:=\hat{\mathfrak S}^{\rm q}/\hat{\mathfrak S}^{\rm q}_s$.

Denote by $\hat{\mathfrak K}^{\rm q}$ the algebra of reduced generalized symmetries
of the (1+1)-dimensional Klein--Gordon equation~\eqref{eq:IDFMSupersystemA},
$\hat{\mathfrak K}^{\rm q}=\{\chi[q]\p_q\mid\mathscr D_y\mathscr D_z\chi=\chi\}$.
The quotient algebra $\mathfrak S^{\rm q}_q$ is naturally isomorphic to
the subalgebra~$\mathfrak A$ of~$\hat{\mathfrak K}^{\rm q}$
that consists of elements of~$\hat{\mathfrak K}^{\rm q}$ admitting local prolongations to~$s$.
It was proved in \cite{OpanasenkoPopovych2018} that
the algebra~$\hat{\mathfrak K}^{\rm q}$ is the semi-direct sum of
its subalgebra~$\hat\Lambda^{\rm q}$ and its ideal~$\hat{\mathfrak K}^{-\infty}$,
$\hat{\mathfrak K}^{\rm q}=\hat\Lambda^{\rm q}\lsemioplus\hat{\mathfrak K}^{-\infty}$, where
\begin{gather*}
\hat\Lambda^{\rm q}:=\langle\,(\mathscr J^\kappa q)\p_q,\,(\mathscr D_y^\iota\mathscr J^\kappa q)\p_q,\,(\mathscr D_z^\iota\mathscr J^\kappa q)\p_q,\,
\kappa\in\mathbb N_0,\,\iota\in\mathbb N\,\rangle,
\quad
\hat{\mathfrak K}^{-\infty}:=\{f(y,z)\p_q\mid f\in{\rm KG}\},
\end{gather*}
$\mathscr J:=y\mathscr D_y-z\mathscr D_z$,
and ${\rm KG}$ denotes the solution set of the (1+1)-dimensional Klein--Gordon equation~\eqref{eq:IDFMSupersystemA},
i.e., $f\in{\rm KG}$ means that $f_{yz}=f$.

\begin{lemma}\label{lem:IDFM:GenSymsOfSupersystem2B}
$\mathfrak A=\big\{Q^{\zeta,c}:=\big((\mathscr D_y+1)\zeta+cq\big)\p_q\mid\zeta=\zeta[q]\colon\mathscr D_y\mathscr D_z\zeta=\zeta,\,c\in\mathbb R\big\}$,
and an appropriate prolongation of the generalized vector field~$Q^{\zeta,c}$ to~$s$ is given by
\begin{gather}\label{eq:IDFM:GenSymsOfSupersystemProlongationToTheta}
\theta=\frac{s_1}{K^2}(\mathscr D_y+\mathscr D_z-2)\zeta.
\end{gather}
\end{lemma}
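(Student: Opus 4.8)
Here is how I would approach the proof.

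The plan is to establish the two inclusions $\mathfrak A\supseteq\{Q^{\zeta,c}\}$ and $\mathfrak A\subseteq\{Q^{\zeta,c}\}$; the first, which at the same time verifies the prolongation formula, is a direct substitution, and the second is the substantial part. For $\supseteq$, I would plug $\chi=(\mathscr D_y+1)\zeta+cq$ (with $\mathscr D_y\mathscr D_z\zeta=\zeta$) and $\theta=(s_1/K^2)h_0$, $h_0:=(\mathscr D_y+\mathscr D_z-2)\zeta$, into~\eqref{eq:IDFMSupersystemGenSymsDetEqs}. Equation~\eqref{eq:IDFMSupersystemGenSymsDetEqA} holds since $(\mathscr D_y+1)\zeta$ and $q$ both satisfy $q_{yz}=q$. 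For~\eqref{eq:IDFMSupersystemGenSymsDetEqB} I would use $s_z=(K^1/K^2)s_1$, the identity $s_1=\tfrac12K^2{\rm e}^{y+z}\hat\omega^1$ (which follows from $\hat\omega^1=\hat{\mathscr A}s$ and $K^1s_y=K^2s_z$), and $K^1\mathscr D_y\theta-K^2\mathscr D_z\theta=-{\rm e}^{-y-z}K^1K^2\hat{\mathscr B}\theta$, to recast~\eqref{eq:IDFMSupersystemGenSymsDetEqB} as
\begin{gather*}
\hat{\mathscr B}\big(\tfrac12{\rm e}^{y+z}h_0\big)=\psi[\chi],\qquad
\psi[\chi]:=\tfrac12{\rm e}^{2(y+z)}\Big(\frac{(\mathscr D_z-1)^2\chi}{K^1}-\frac{(\mathscr D_y+\mathscr D_z-2)\chi}{K^2}\Big);
\end{gather*}
since $\hat{\mathscr B}\big(\tfrac12{\rm e}^{y+z}h_0\big)=\tfrac12{\rm e}^{2(y+z)}\big((\mathscr D_z+1)h_0/K^1-(\mathscr D_y+1)h_0/K^2\big)$, this reduces to the operator identities $(\mathscr D_y+1)(\mathscr D_y+\mathscr D_z-2)=(\mathscr D_y+\mathscr D_z-2)(\mathscr D_y+1)$ (trivial) and $(\mathscr D_z+1)(\mathscr D_y+\mathscr D_z-2)=(\mathscr D_z-1)^2(\mathscr D_y+1)$ (both sides act as $\mathscr D_z^2-\mathscr D_z+\mathscr D_y-1$ on solutions of $q_{yz}=q$, using $\mathscr D_y\mathscr D_z\zeta=\zeta$), while the $cq$-contributions cancel because $(\mathscr D_z-1)^2q=K^1$ and $(\mathscr D_y+\mathscr D_z-2)q=K^2$.

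For the inclusion $\subseteq$, let $\chi[q]\p_q\in\mathfrak A$ with a local prolongation $\theta[q,s]$. I would first reduce $\theta$ modulo $\hat{\mathfrak S}^{\rm q}_s$: since all operators in~\eqref{eq:IDFMSupersystemGenSymsDetEqB} preserve the degree in the $s$-jets, the components of $\theta$ of degree $\ne1$ satisfy $\hat{\mathscr B}(\cdot)=0$ and hence, by Corollary~\ref{cor:IDFM:DiffFunctionsOfOmegaHat}, lie in $\hat{\mathfrak S}^{\rm q}_s$; so one may assume $\theta=\sum_\kappa b_\kappa[q]\hat\omega^\kappa$, and matching coefficients of $\hat\omega^\kappa$ in $\hat{\mathscr B}\theta=\psi[\chi]\hat\omega^1$ gives $\hat{\mathscr B}b_\kappa=0$ (so $b_\kappa$ constant, again in $\hat{\mathfrak S}^{\rm q}_s$) for $\kappa\ne1$ and $\hat{\mathscr B}b_1=\psi[\chi]$. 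Writing $b_1=\tfrac12{\rm e}^{y+z}h$, $h=h[q]$, the latter becomes
\begin{gather*}
K^2\big((\mathscr D_z+1)h-(\mathscr D_z-1)^2\chi\big)=K^1\big((\mathscr D_y+1)h-(\mathscr D_y+\mathscr D_z-2)\chi\big).
\end{gather*}

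Now I would use that $K^1=q_{-2}-2q_{-1}+q_0$ and $K^2=q_1+q_{-1}-2q_0$ are affine in the jet coordinates and relatively prime: applying Hadamard's lemma on the hypersurface $\{K^1=0\}$ to the displayed identity produces a differential function $\mu=\mu[q]$ with $(\mathscr D_z+1)h-(\mathscr D_z-1)^2\chi=\mu K^1$ and $(\mathscr D_y+1)h-(\mathscr D_y+\mathscr D_z-2)\chi=\mu K^2$. Applying $\mathscr D_y$ to the first relation and using $\mathscr D_yK^1=K^2$, $\mathscr D_zK^2=K^1$, the analogous relations $\mathscr D_y(\mathscr D_z-1)^2\chi=(\mathscr D_y+\mathscr D_z-2)\chi$, $\mathscr D_z(\mathscr D_y+\mathscr D_z-2)\chi=(\mathscr D_z-1)^2\chi$ on solutions of $q_{yz}=q$, and the second relation, one obtains $(\mathscr D_y\mu)K^1=(\mathscr D_z\mu)K^2$; an inspection of the orders of the jet variables in this identity forces $\mu$ to be a constant $-c$, $c\in\mathbb R$. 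Setting $\tilde\chi:=\chi-cq$ (still a solution of $q_{yz}=q$) and $w:=(\mathscr D_z-1)\tilde\chi$, the relations reduce to $\mathscr D_z(h-w)=-(h+w)$ and $\mathscr D_y(h+w)=-(h-w)$, whence $h\pm w$, and therefore $h=\tfrac12\big((h+w)+(h-w)\big)$, solve $q_{yz}=q$. Finally, putting $\zeta:=\tfrac14\big((\mathscr D_z+1)\tilde\chi-h\big)$ — a differential function of $q$ — one checks $\mathscr D_y\mathscr D_z\zeta=\zeta$ (using $\mathscr D_y\mathscr D_z\tilde\chi=\tilde\chi$ and $\mathscr D_y\mathscr D_zh=h$) and $(\mathscr D_y+1)\zeta=\tilde\chi$ (using $(\mathscr D_y+1)(\mathscr D_z+1)\tilde\chi=(\mathscr D_y+\mathscr D_z+2)\tilde\chi$ and $(\mathscr D_y+1)h=-(\mathscr D_y-1)(\mathscr D_z-1)\tilde\chi=(\mathscr D_y+\mathscr D_z-2)\tilde\chi$ on $q_{yz}=q$), hence $\chi=(\mathscr D_y+1)\zeta+cq$, i.e.\ $\chi\p_q=Q^{\zeta,c}$.

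The hard part lies entirely in the inclusion $\subseteq$: the passage from the displayed bilinear identity to the ``potential'' $\mu$ via coprimality of $K^1$ and $K^2$, and, above all, the proof that $\mu$ is necessarily a constant, which requires careful bookkeeping of the orders of the jet variables; the reduction modulo $\hat{\mathfrak S}^{\rm q}_s$ is routine but must be done with some care, and once $\mu$ is known to be constant the recovery of $\zeta$ is purely algebraic.
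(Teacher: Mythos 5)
Your proof is correct and follows essentially the same route as the paper's: both directions hinge on the same key steps, namely factoring the bilinear identity in $K^1,K^2$ through a differential function $\mu$, showing $\mu$ is constant via the equation $K^1\mathscr D_y\mu=K^2\mathscr D_z\mu$, and recovering $\zeta=\frac14\big((\mathscr D_z+1)(\chi+\mu q)-h\big)$. The only differences are presentational — you reduce $\theta$ modulo the ideal by an affine-in-$\hat\omega$ decomposition where the paper substitutes $\theta=s_1(K^2)^{-1}\tilde\theta$ and fixes the $\omega$'s, and you introduce the auxiliary $w=(\mathscr D_z-1)\tilde\chi$ in the final algebra.
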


\begin{proof}
Denote $\tilde{\mathfrak A}=\big\{Q^{\zeta,c}:=\big((\mathscr D_y+1)\zeta+cq\big)\p_q\mid\zeta=\zeta[q]\colon\mathscr D_y\mathscr D_z\zeta=\zeta,\,c\in\mathbb R\big\}$.
Note that here the form of~$\zeta$ is defined up to summands proportional to ${\rm e}^{-y-z}$.

For any solution~$\zeta$ of the equation $\mathscr D_y\mathscr D_z\zeta=\zeta$,
the differential functions~$\chi=(\mathscr D_y+1)\zeta$
and~$\theta$ defined by~\eqref{eq:IDFM:GenSymsOfSupersystemProlongationToTheta}
satisfy the system~\eqref{eq:IDFMSupersystemGenSymsDetEqs}.
The tuple $(\chi,\theta)=(q,0)$ is a solution of~\eqref{eq:IDFMSupersystemGenSymsDetEqs} as well.
Hence $\mathfrak A\supseteq\tilde{\mathfrak A}$.

Suppose that a generalized vector field $\chi[q]\p_q$ belongs to $\mathfrak A$.
This means that there exists $\theta=\theta[q,s]$ such that
$\chi\p_q+\theta\p_s\in\hat{\mathfrak S}^{\rm q}$.
Then the tuple $(\chi,\theta)$ satisfies the system~\eqref{eq:IDFMSupersystemGenSymsDetEqs}.
By the substitution $\theta=s_1(K^2)^{-1}\tilde\theta$,
the equation~\eqref{eq:IDFMSupersystemGenSymsDetEqB} is reduced to
\begin{gather}\label{eq:IDFMSupersystemGenSymsDetEqBReduced}
K^1(\mathscr D_y+1)\tilde\theta-K^2(\mathscr D_z+1)\tilde\theta
=
K^1(\mathscr D_y+\mathscr D_z-2)\chi-K^2(\mathscr D_z-1)^2\chi.
\end{gather}
We use the modified coordinates on the manifold~$\mathcal K^{(\infty)}$.
If the function~$\tilde\theta$ depends on $\omega^\kappa$ for some~$\kappa\in\mathbb N_0$,
then a differential function of $q$ obtained from~$\tilde\theta$
by fixing values of all involved $\hat\omega^\kappa$'s in the domain of~$\tilde\theta$
is also a solution of~\eqref{eq:IDFMSupersystemGenSymsDetEqBReduced} for the same value of~$\chi$.
Therefore, without loss of generality we can assume that $\tilde\theta=\tilde\theta[q]$.
Then the equation~\eqref{eq:IDFMSupersystemGenSymsDetEqBReduced} rewritten in the form
\[
K^1\big((\mathscr D_y+1)\tilde\theta-(\mathscr D_y+\mathscr D_z-2)\chi\big)
=
K^2\big((\mathscr D_z+1)\tilde\theta-(\mathscr D_z-1)^2\chi\big)
\]
implies that there exists a differential function $\mu=\mu[q]$ such that
\begin{gather}\label{eq:IDFMSupersystemGenSymsIncredibleEqsForTheta}
(\mathscr D_y+1)\tilde\theta-(\mathscr D_y+\mathscr D_z-2)\chi=\mu K^2,\quad
(\mathscr D_z+1)\tilde\theta-(\mathscr D_z-1)^2\chi=\mu K^1.
\end{gather}
We exclude~$\tilde\theta$ from these equations
by acting the operators $\mathscr D_z+1$ and $\mathscr D_y+1$
on the first and the second equations, respectively,
and subtracting the first obtained equation from the second one,
which gives the equation on~$\mu$ alone, $K^1\mathscr D_y\mu=K^2\mathscr D_z\mu$.
In view of Corollary~\ref{cor:IDFM:DiffFunctionsOfOmegaHat}, $\mu$ is a constant,
and hence equations~\eqref{eq:IDFMSupersystemGenSymsIncredibleEqsForTheta}
can be rewritten as
\begin{gather}\label{eq:IDFMSupersystemGenSymsIncredibleEqsForTheta2}
(\mathscr D_y+1)\tilde\theta=(\mathscr D_y+\mathscr D_z-2)(\chi+\mu q),\quad
(\mathscr D_z+1)\tilde\theta=(\mathscr D_z-1)^2(\chi+\mu q).
\end{gather}
We subtract the second equation from the result
of acting the operator~$\mathscr D_z$ on the first equation
and thus derive the equation $(\mathscr D_y\mathscr D_z-1)\tilde\theta=0$.
Then the differential function $\zeta=\zeta[q]$ that is defined by
$\zeta:=-\frac14\big(\tilde\theta-(\mathscr D_z+1)(\chi+\mu q)\big)$
satisfies the same equation, $(\mathscr D_y\mathscr D_z-1)\zeta=0$.
We express $\tilde\theta$ from the equality defining~$\zeta$,
$\tilde\theta=-4\zeta+(\mathscr D_z+1)(\chi+\mu q)$,
and substitute the obtained expression
into~$\eqref{eq:IDFMSupersystemGenSymsIncredibleEqsForTheta2}$,
deriving the equations
\[
-4(\mathscr D_y+1)\zeta=-4(\chi+\mu q),\quad
-4(\mathscr D_z+1)\zeta=-4\mathscr D_z(\chi+\mu q).
\]
The first of these equations gives the required representation for~$\chi$,
$\chi=(\mathscr D_y+1)\zeta-\mu q$.
The second equation is identically satisfied in view of the above representation for~$\chi$
and the equation  $\mathscr D_y\mathscr D_z\zeta=\zeta$.
We also get $\tilde\theta=-4\zeta+(\mathscr D_z+1)(\mathscr D_y+1)\zeta=(\mathscr D_y+\mathscr D_z-2)\zeta$.
Therefore, $\mathfrak A\subseteq\tilde{\mathfrak A}$, i.e., $\mathfrak A=\tilde{\mathfrak A}$,
and the equality~\eqref{eq:IDFM:GenSymsOfSupersystemProlongationToTheta} defines
an appropriate prolongation of $Q^{\zeta,c}\in\mathfrak A$ to~$s$.
\end{proof}

In other words, Lemma~\ref{lem:IDFM:GenSymsOfSupersystem2B} implies
that an element of~$\hat\Lambda^{\rm q}$ can be mapped to a generalized symmetry of the system~\eqref{eq:IDFMDiagonalizedSystem}
if and only if the associated operator belongs to the subspace
\[
\langle\,1,\,(\mathscr D_y+1)\mathscr D_y^\iota\mathscr J^\kappa,\,(\mathscr D_z+1)\mathscr D_z^\iota\mathscr J^\kappa,\,
\kappa,\iota\in\mathbb N_0\,\rangle.
\]
In particular, this subspace contains all polynomials of~$\mathscr D_y$ and all polynomials of~$\mathscr D_z$.
A complement subspace to it in the entire space of operators associated with elements of~$\hat\Lambda^{\rm q}$
is $\langle\,\mathscr J^\kappa,\,\kappa\in\mathbb N\,\rangle$.
Elements of~$\hat\Lambda^{\rm q}$ associated with operators from the complement subspace
are mapped to \emph{nonlocal} symmetries of the system~\eqref{eq:IDFMDiagonalizedSystem}.
Such nonlocal symmetries are generalized symmetries of certain potential systems for the system~\eqref{eq:IDFMDiagonalizedSystem}
that are related to  potential systems for the (1+1)-dimensional Klein--Gordon equation~\eqref{eq:IDFMSupersystemA}.
We plan to study generalized potential symmetries of~\eqref{eq:IDFMDiagonalizedSystem}
in the sequel of the present paper.

Completing the above consideration, we prove the following theorem.

\begin{theorem}\label{thm:IDFMGenSyms}
The quotient algebra $\Sigma^{\rm q}$
of generalized symmetries of the system~\eqref{eq:IDFMDiagonalizedSystem}
is naturally isomorphic to the algebra~$\hat\Sigma^{\rm q}$ spanned by the generalized vector fields
\begin{gather*}
\check{\mathcal W}(\Omega)=\Omega\p_{\ri^3},
\quad
\check{\mathcal P}(\Phi)={\rm e}^{(\ri^2-\ri^1)/2}\left(
(\Phi+2\Phi_{\ri^1})\ri^1_x\p_{\ri^1}
+(\Phi-2\Phi_{\ri^2})\ri^2_x\p_{\ri^2}
+2\Phi\ri^3_x\p_{\ri^3}\right),
\\
\check{\mathcal D}=
\big(x-(\ri^1+\ri^2+1)t\big)\ri^1_x\p_{\ri^1}
+\big(x-(\ri^1+\ri^2-1)t\big)\ri^2_x\p_{\ri^2}
+\big(x-(\ri^1+\ri^2)t\big)\ri^3_x\p_{\ri^3},
\\
\check{\mathcal R}(\Gamma)={\rm e}^{(\ri^2-\ri^1)/2}\left(
 (\tilde{\mathscr D}_y\Gamma+\Gamma)\ri^1_x\p_{\ri^1}
+(\tilde{\mathscr D}_z\Gamma+\Gamma)\ri^2_x\p_{\ri^2}
+2\Gamma\ri^3_x\p_{\ri^3}\right),
\end{gather*}
where
$\Gamma$ runs through the set
$
\{\tilde{\mathscr J}^\kappa\tilde q,\,
\tilde{\mathscr D}_y^\iota\tilde{\mathscr J}^\kappa\tilde q,\,
\tilde{\mathscr D}_z^\iota\tilde{\mathscr J}^\kappa\tilde q,\,
\kappa\in\mathbb N_0,\,\iota\in\mathbb N\}
$
with
\begin{gather*}
\tilde{\mathscr D}_y:=-\frac1{\ri^1_x}\big(\mathscr D_t+(\ri^1+\ri^2-1)\mathscr D_x\big),\quad
\tilde{\mathscr D}_z:=-\frac1{\ri^2_x}\big(\mathscr D_t+(\ri^1+\ri^2+1)\mathscr D_x\big),
\\
\tilde{\mathscr J}:=\frac{\ri^1}2\tilde{\mathscr D}_y+\frac{\ri^2}2\tilde{\mathscr D}_z,\quad
\tilde q:={\rm e}^{(\ri^1-\ri^2)/2}\big(x-(\ri^1+\ri^2+1)t\big),
\end{gather*}
the parameter function~$\Phi=\Phi(\ri^1,\ri^2)$ runs through the solution set of
the Klein--Gordon equation $\Phi_{\ri^1\ri^2}=-\Phi/4$,
and the parameter function~$\Omega$ runs through the set of smooth functions of a finite, but unspecified number of
$\omega^\kappa:=({\rm e}^{\ri^2-\ri^1}\mathscr D_x)^\kappa\ri^3$, $\kappa\in\mathbb N_0$.
\end{theorem}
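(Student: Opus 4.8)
The plan is to assemble the result from the structural decompositions already established. First I would recall that, by Lemma~\ref{lem:IDFM:GenSym12} and Corollary~\ref{cor:IDFM:GenSym3}, the algebra $\hat\Sigma^{\rm q}$ fits into the (non-direct) sum $\hat\Sigma^{\rm q}=\bar\Sigma^{\rm q}_{12}+\hat\Sigma^{\rm q}_3$, where $\hat\Sigma^{\rm q}_3$ is the ideal of vector fields $\Omega\p_{\ri^3}$ with $\Omega$ a smooth function of finitely many $\omega^\kappa$ (this yields exactly the family $\check{\mathcal W}(\Omega)$), and $\bar\Sigma^{\rm q}_{12}$ consists of the vector fields of the form~\eqref{eq:RepresentationForElementsOfSigma12}. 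So the task reduces to describing $\bar\Sigma^{\rm q}_{12}$, equivalently (up to the one-dimensional overlap $\langle {\rm e}^{\ri^2-\ri^1}\ri^3_x\p_{\ri^3}\rangle$) the quotient $\Sigma^{\rm q}_{12}$, which by the discussion preceding the theorem is naturally isomorphic to $\mathfrak S^{\rm q}_q$, and that in turn to the subalgebra $\mathfrak A\subseteq\hat{\mathfrak K}^{\rm q}$ of Klein--Gordon generalized symmetries admitting local prolongation to~$s$.

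Next I would invoke Lemma~\ref{lem:IDFM:GenSymsOfSupersystem2B}, which identifies $\mathfrak A$ with $\{Q^{\zeta,c}=((\mathscr D_y+1)\zeta+cq)\p_q\}$ and supplies the $s$-prolongation~\eqref{eq:IDFM:GenSymsOfSupersystemProlongationToTheta}. Combining this with the decomposition $\hat{\mathfrak K}^{\rm q}=\hat\Lambda^{\rm q}\lsemioplus\hat{\mathfrak K}^{-\infty}$ of~\cite{OpanasenkoPopovych2018}: the piece $\hat{\mathfrak K}^{-\infty}=\{f(y,z)\p_q\mid f\in{\rm KG}\}$, once written as $((\mathscr D_y+1)\zeta+cq)\p_q$ with $\zeta$ itself a Klein--Gordon solution, produces precisely the family $\check{\mathcal P}(\Phi)$ after transporting back through~$\hat{\mathcal T}$; here $\Phi(\ri^1,\ri^2)$ runs over solutions of $\Phi_{\ri^1\ri^2}=-\Phi/4$, which is the Klein--Gordon equation $q_{yz}=q$ rewritten in the coordinates $y=\ri^1/2$, $z=-\ri^2/2$. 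The piece $\hat\Lambda^{\rm q}$, generated by $(\mathscr J^\kappa q)\p_q$, $(\mathscr D_y^\iota\mathscr J^\kappa q)\p_q$, $(\mathscr D_z^\iota\mathscr J^\kappa q)\p_q$, gives the family $\check{\mathcal R}(\Gamma)$ with $\Gamma$ running over $\{\tilde{\mathscr J}^\kappa\tilde q,\ \tilde{\mathscr D}_y^\iota\tilde{\mathscr J}^\kappa\tilde q,\ \tilde{\mathscr D}_z^\iota\tilde{\mathscr J}^\kappa\tilde q\}$, the image under $\hat{\mathcal T}$ of the generators $\mathscr J^\kappa q$, etc. (using the pushforward formulas for $\mathscr D_y,\mathscr D_z$ recorded in Section~\ref{sec:IDFMPreliminaries}, which send $\mathscr D_y\mapsto\tilde{\mathscr D}_y$, $\mathscr D_z\mapsto\tilde{\mathscr D}_z$, $\mathscr J\mapsto\tilde{\mathscr J}$ and $q\mapsto\tilde q$). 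Finally, the solitary generator $cq$ in $Q^{\zeta,c}$ corresponds under $\hat{\mathcal T}$ to $\check{\mathcal D}$: one checks directly that $q\p_q$ (with its $s$-prolongation) pushes forward, via~\eqref{eq:IDFMInverseToTransReducingToKGEq} and after excluding $p$ through~\eqref{eq:IDFMSupersystemC}, to the dilatation-type vector field displayed in the statement, whose coefficients are $\big(x-V^it\big)\ri^i_x$.

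The last step is bookkeeping: I would translate each listed symmetry-like object of~\eqref{eq:IDFMSupersystem} into a generalized vector field on~$\mathcal S^{(\infty)}$ by prolonging to~$p$ according to~\eqref{eq:IDFMSupersystemC} and applying the pushforward induced by~$\hat{\mathcal T}$, then projecting onto the $(t,x,\ri^1,\ri^2,\ri^3)$ jet space; the coefficients ${\rm e}^{(\ri^2-\ri^1)/2}$, the combinations $(\Phi+2\Phi_{\ri^1})\ri^1_x$, etc., and the third components $2\Phi\ri^3_x$, $2\Gamma\ri^3_x$ come out of the substitutions for $q_y,q_z,q_{zz},K^1,K^2,s_y/K^2$ collected in Section~\ref{sec:IDFMSolutionsViaEssentialSubsystem}. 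Conversely, that nothing else occurs follows because the maps $\hat\Sigma^{\rm q}\cong\hat{\mathfrak S}^{\rm q}$, $\mathfrak S^{\rm q}_q\cong\mathfrak A$ are isomorphisms and $\mathfrak A=\tilde{\mathfrak A}$ by Lemma~\ref{lem:IDFM:GenSymsOfSupersystem2B}, so the four families above together with $\check{\mathcal W}(\Omega)$ span all of $\hat\Sigma^{\rm q}$.

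I expect the main obstacle to be the explicit verification that the abstract isomorphism $\hat{\mathfrak S}^{\rm q}\cong\hat\Sigma^{\rm q}$ carries the compact generators $Q^{\zeta,c}$ and their $\theta$-prolongations~\eqref{eq:IDFM:GenSymsOfSupersystemProlongationToTheta} to \emph{exactly} the coefficient expressions written in the statement — i.e., that after prolonging to $p$ via~\eqref{eq:IDFMSupersystemC}, pushing forward by~\eqref{eq:IDFMInverseToTransReducingToKGEq}, and re-expressing everything through the operators $\tilde{\mathscr D}_y,\tilde{\mathscr D}_z,\tilde{\mathscr J}$ and the function $\tilde q$, the $\ri^1$-, $\ri^2$- and $\ri^3$-components collapse to $(\tilde{\mathscr D}_y\Gamma+\Gamma)\ri^1_x$, $(\tilde{\mathscr D}_z\Gamma+\Gamma)\ri^2_x$ and $2\Gamma\ri^3_x$ (and analogously for $\Phi$ and for $\check{\mathcal D}$). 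This is a finite but somewhat intricate change-of-variables computation; the conceptual content is already fully contained in the preceding lemmas, so no new idea is needed, only careful transport of formulas.
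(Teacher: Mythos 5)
Your proposal is correct and follows essentially the same route as the paper's proof: reduce via the ideal $\hat\Sigma^{\rm q}_3$ and the isomorphisms $\Sigma^{\rm q}_{12}\cong\mathfrak S^{\rm q}_q\cong\mathfrak A$ to Lemma~\ref{lem:IDFM:GenSymsOfSupersystem2B}, split $\mathfrak A$ according to $\hat{\mathfrak K}^{\rm q}=\hat\Lambda^{\rm q}\lsemioplus\hat{\mathfrak K}^{-\infty}$ plus the generator $q\p_q$, and transport everything back through the prolongation to $p$ and the pushforward by~\eqref{eq:IDFMInverseToTransReducingToKGEq}. The paper's proof is exactly this bookkeeping, recording the intermediate generalized vector field $\tilde Q$ and the images $q\p_q\mapsto-\check{\mathcal D}$, $Q^{\zeta,0}\mapsto-\check{\mathcal R}(\Gamma)$ or $-\check{\mathcal P}(\Phi)$, so no further idea is needed beyond what you describe.
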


\begin{proof}
For computing the counterpart of an element $Q=\chi\p_q+\theta\p_s\in\hat{\mathfrak S}^{\rm q}$
in~$\hat\Sigma^{\rm q}$, one should make the following steps:
\begin{itemize}\itemsep=0ex
\item
prolong the generalized vector field~$Q$ to~$p$ in view of~\eqref{eq:IDFMSupersystemC},
\item
push forward the prolonged vector field by an appropriate prolongation of the transformation~\eqref{eq:IDFMInverseToTransReducingToKGEq},
\item
convert the obtained image to the evolutionary form and
\item
substitute for all derivatives of~$\ri$ with differentiation with respect to~$t$
in view of the system~\eqref{eq:IDFMDiagonalizedSystem} and its differential consequences.
\end{itemize}
This procedure gives the generalized vector field
\[
\tilde Q=
-{\rm e}^{(\ri^2-\ri^1)/2}\tilde\chi\ri^1_x\p_{\ri^1}
-{\rm e}^{(\ri^2-\ri^1)/2}(\tilde{\mathscr D}_z\tilde\chi)\ri^2_x\p_{\ri^2}
+\big(\theta-\tfrac12{\rm e}^{(\ri^2-\ri^1)/2}(\tilde{\mathscr D}_z\tilde\chi+\tilde\chi)\ri^3_x\big)\p_{\ri^3}.
\]
Here and in what follows tildes mark the counterparts of involved operators and differential functions
that are computed according to the procedure.

The ideal~$\hat{\mathfrak S}^{\rm q}_s$ of~$\hat{\mathfrak S}^{\rm q}$ corresponds to
and is isomorphic to the ideal~$\hat\Sigma^{\rm q}_3$ of~$\hat\Sigma^{\rm q}$,
and the form of elements of~$\hat\Sigma^{\rm q}_3$, $\check{\mathcal W}(\Omega)$,
is already known.
The generalized vector field~$q\p_q$ is mapped to $-\check{\mathcal D}$.
We also prolong each generalized vector field of the form $Q^{\zeta,0}:=(\mathscr D_y+1)\zeta\p_q$ from~$\mathfrak A$
to~$s$ according to~\eqref{eq:IDFM:GenSymsOfSupersystemProlongationToTheta}
and then employ the above procedure, getting the generalized vector field
\begin{gather*}
\tilde Q^{\zeta,0}=-{\rm e}^{(\ri^2-\ri^1)/2}\left(
\ri^1_x(\tilde{\mathscr D}_y+1)\tilde\zeta\p_{\ri^1}
+\ri^2_x(\tilde{\mathscr D}_z+1)\tilde\zeta\p_{\ri^2}
+2\ri^3_x\tilde\zeta\p_{\ri^3}\right),
\end{gather*}
where
$\zeta=\zeta[q]$ runs through the characteristics of generalized vector fields in $\hat{\mathfrak K}^{\rm q}$ and is defined up to summands proportional to ${\rm e}^{-y-z}$,
and $\tilde\zeta$ denotes the pullback of~$\zeta$
by the infinite prolongation of the transformation~\eqref{eq:IDFMTransReducingToKGEq}.
According to the splitting $\hat{\mathfrak K}^{\rm q}=\hat\Lambda^{\rm q}\lsemioplus\hat{\mathfrak K}^{-\infty}$,
for $\zeta\p_q\in\hat\Lambda^{\rm q}$ and $\zeta\p_q\in\hat{\mathfrak K}^{-\infty}$
we obtain generalized vector fields of the forms $-\check{\mathcal R}(\Gamma)$ and $-\check{\mathcal P}(\Phi)$,
respectively, where $\Gamma\p_q$ can be assumed to run through the chosen basis of~$\hat\Lambda^{\rm q}$,
and the parameter function~$\Phi=\Phi(\ri^1,\ri^2)$ runs through the solution set of
the Klein--Gordon equation $\Phi_{\ri^1\ri^2}=-\Phi/4$
and is defined up to summands proportional to~${\rm e}^{(\ri^2-\ri^1)/2}$.
\end{proof}

\begin{remark}\label{rem:IDFM:indeterminacy}
The subspaces~$\mathcal I^1$ and~$\mathcal I^2$
that consist of all generalized vector fields of the forms
$\check{\mathcal P}(\Phi)$ and $\check{\mathcal W}(\Omega)$
from the algebra~$\hat\Sigma^{\rm q}$, respectively,
are (infinite-dimensional) ideals of~$\hat\Sigma^{\rm q}$.
Moreover, the ideal~$\mathcal I^1$ is commutative.
Since $\check{\mathcal P}({\rm e}^{\ri^2-\ri^1})=\check{\mathcal W}(\omega^1)={\rm e}^{\ri^2-\ri^1}\ri^3_x\p_{\ri^3}$,
these ideals are not disjoint,
$\mathcal I^1\cap\mathcal I^2=\langle {\rm e}^{\ri^2-\ri^1}\ri^3_x\p_{\ri^3}\rangle$,
which displays the above indeterminacy of~$\Phi$.
\noprint{
Among the commutation relations of the algebra~$\hat\Sigma^{\rm q}$ are
\begin{gather*}
[\check{\mathcal W}(\Omega^1),\check{\mathcal W}(\Omega^2)]=\check{\mathcal W}(\Omega^{12})
\quad\mbox{with}\quad
\Omega^{12}:=\sum_{\iota=0}^\infty\big((\hat{\mathscr A}^\iota \Omega^1)\Omega^2_{\omega^\iota}-(\hat{\mathscr A}^\iota \Omega^2)\Omega^1_{\omega^\iota}\big),\\
[\check{\mathcal W}(\Omega),\check{\mathcal P}(\Phi)]=0,\quad
[\check{\mathcal P}(\Phi^1),\check{\mathcal P}(\Phi^2)]=0,\quad
[\check{\mathcal D},\check{\mathcal R}(\Gamma)]=0,\quad
[\check{\mathcal D},\check{\mathcal P}(\Phi)]=\check{\mathcal P}(\Phi).
\end{gather*}
}
\end{remark}

\begin{remark}\label{thm:IDFM1stOrderGenSyms}
The algebra of first-order reduced generalized symmetries of the system~\eqref{eq:IDFMDiagonalizedSystem}
can be identified with the subspace of~$\hat\Sigma^{\rm q}$ spanned by
$\check{\mathcal D}$, $\check{\mathcal R}(\tilde q)$, $\check{\mathcal R}(\tilde{\mathscr D}_z\tilde q)$,
$\check{\mathcal P}(\Phi)$, $\check{\mathcal W}(\Omega)$,
where the parameter function~$\Phi=\Phi(\ri^1,\ri^2)$ runs through the solution set of
the Klein--Gordon equation $\Phi_{\ri^1\ri^2}=-\Phi/4$,
and the parameter function~$\Omega$ runs through the set of smooth functions
of $\omega^0=\ri^3$ and $\omega^1={\rm e}^{\ri^2-\ri^1}\ri^3_x$.
As was noted in \cite[Remark~19]{OpanasenkoBihloPopovychSergyeyev2020},
this subspace is closed with respect to the Lie bracket of generalized vector fields,
and thus we can call it an algebra.
The indicated property is shared by all strictly hyperbolic diagonalizable hydrodynamic-type systems. 
In the notation of \cite[Theorem~18]{OpanasenkoBihloPopovychSergyeyev2020},
\[
\check{\mathcal R}(\tilde q)=2(\check{\mathcal D}-\check{\mathcal G}_1),\quad
\check{\mathcal R}(\tilde{\mathscr D}_z\tilde q)=2(\check{\mathcal D}+\check{\mathcal G}_1+\check{\mathcal G}_2),
\]
where
$\check{\mathcal G_1}=(t \ri^1_x-1)\p_{\ri^1}+t\ri^2_x\p_{\ri^2}+t\ri^3_x\p_{\ri^3}$ and
$\check{\mathcal G_2}=\p_{\ri^1}-\p_{\ri^2}$.
Moreover, the generalized vector fields
\begin{gather}\label{eq:IDFMGenSymsEquivToLieSyms}
\check{\mathcal D},\quad \check{\mathcal G_1},\quad \check{\mathcal G_2},\quad
\check{\mathcal P}\big((\ri^1+\ri^2){\rm e}^{(\ri^1-\ri^2)/2}\big),\quad
\check{\mathcal P}\big({\rm e}^{(\ri^1-\ri^2)/2}\big),\quad \check{\mathcal W}(\Omega)
\end{gather}
with an arbitrary $\Omega$ depending on~$\ri^3$ only are
the evolutionary forms of Lie-symmetry vector fields~\smash{$-\hat{\mathcal D}$},
$-\hat{\mathcal G}_1$, $\hat{\mathcal G}_2$,
$2\hat{\mathcal P}^t$, $-2\hat{\mathcal P}^x$ and~$\hat{\mathcal W}(\Omega)$
of the system~\eqref{eq:IDFMDiagonalizedSystem}, respectively,
which span the entire Lie invariance algebra of this system.
Therefore, any element of~$\hat\Sigma^{\rm q}$ that does not belong to
the span of~\eqref{eq:IDFMGenSymsEquivToLieSyms}
is a genuinely generalized symmetry of the system~\eqref{eq:IDFMDiagonalizedSystem}.
\end{remark}

\section{Cosymmetries}\label{sec:IDFMCosyms}

The space~$\Upsilon$ of cosymmetries of the system~\eqref{eq:IDFMDiagonalizedSystem}
can be computed in a way that is similar to the computation of generalized symmetries
and involves the partial coupling of this system
and the linearizability of the subsystem~\eqref{eq:IDFMDEq1}--\eqref{eq:IDFMDEq2}
by the hodograph transformation.
Let $\Upsilon^{\rm triv}\subset\Upsilon$ denote the space of trivial cosymmetries of the system~\eqref{eq:IDFMDiagonalizedSystem},
which vanish on solutions thereof.
The quotient space $\Upsilon^{\rm q}=\Upsilon/\Upsilon^{\rm triv}$
can be identified, e.g., with the subspace
that consists of canonical representatives of cosymmetries,
$\hat\Upsilon^{\rm q}=\big\{(\lambda^i[\ri],i=1,2,3)\in\Upsilon\big\}$.

\begin{theorem}\label{thm:IDFM:Cosyms}
The space~$\hat\Upsilon^{\rm q}$ of canonical representatives of cosymmetries
is spanned by cosymmetries from three families,
\begin{enumerate}
\item
${\rm e}^{\ri^1-\ri^2}\big(\Omega,-\Omega,(\hat{\mathscr A}\Omega)/\omega^1\big)$
with the operator $\hat{\mathscr A}=\sum_{\kappa=0}^{\infty}\omega^{\kappa+1}\p_{\omega^\kappa}$ and
with 
$\Omega$ running through the space of smooth functions
of a finite, but unspecified number of \smash{$\omega^\kappa=({\rm e}^{\ri^2-\ri^1}\mathscr D_x)^\kappa\ri^3$}, $\kappa\in\mathbb N_0$.
\item
${\rm e}^{(\ri^1-\ri^2)/2}(-2\Phi_{\ri^1},\,\Phi,0)$,
where the parameter function~$\Phi=\Phi(\ri^1,\ri^2)$
runs through the solution space of the Klein--Gordon equation $\Phi_{\ri^1\ri^2}=-\Phi/4$.
\item
${\rm e}^{(\ri^1-\ri^2)/2}\big(-\tilde{\mathscr D}_y\tilde{\mathfrak Q}\tilde q,\, \tilde{\mathfrak Q}\tilde q,\,0\,\big)$,
where
the operator~$\tilde{\mathfrak Q}$ runs through the set
\begin{gather*}
\big\{\tilde{\mathscr J}^\kappa\!,\,
\tilde{\mathscr J}^\kappa\tilde{\mathscr D}_y^\iota,\,
\tilde{\mathscr J}^\kappa\tilde{\mathscr D}_z^\iota,\,\kappa\in\mathbb N_0,\,\iota\in\mathbb N\big\},
\\
\hspace*{-\mathindent}\mbox{and}
\\[-1ex]
\tilde{\mathscr D}_y:=-\frac1{\ri^1_x}\big(\mathscr D_t+(\ri^1+\ri^2-1)\mathscr D_x\big),\quad
\tilde{\mathscr D}_z:=-\frac1{\ri^2_x}\big(\mathscr D_t+(\ri^1+\ri^2+1)\mathscr D_x\big),
\\
\tilde{\mathscr J}:=\frac{\ri^1}2\tilde{\mathscr D}_y+\frac{\ri^2}2\tilde{\mathscr D}_z,\quad
\tilde q:={\rm e}^{(\ri^1-\ri^2)/2}\big(x-(\ri^1+\ri^2+1)t\big).
\end{gather*}
\end{enumerate}
\end{theorem}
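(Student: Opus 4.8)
The determining system for $\hat\Upsilon^{\rm q}$ is the formal adjoint of the system~\eqref{eq:DetEqsForSigmaq} of determining equations for generalized symmetries. Written out, it is triangular in the sense dual to the partial coupling of~\eqref{eq:IDFMDiagonalizedSystem}: the equation on~$\lambda^3$ decouples from~$\lambda^1,\lambda^2$ and reads $\mathscr B\lambda^3+(\ri^1_x+\ri^2_x)\lambda^3=0$, whereas the equations on~$\lambda^1$ and~$\lambda^2$ carry~$\lambda^3$ as a source term. The plan is to imitate the treatment of generalized symmetries in Section~\ref{sec:IDFMGenSyms}. First, the substitution $\lambda^3={\rm e}^{\ri^1-\ri^2}\Omega^3$ converts the decoupled equation into $\mathscr B\Omega^3=0$, so by Lemma~\ref{lem:IDFM:DiffFunctionsOfOmega} the function~$\Omega^3$ is a smooth function of finitely many~$\omega$'s.

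Next I would single out the degeneracy-related cosymmetries by imposing $\lambda^1+\lambda^2=0$. Taking the sum and the difference of the two remaining determining equations, one combination becomes $\mathscr B(\lambda^1-\lambda^2)+(\ri^1_x+\ri^2_x)(\lambda^1-\lambda^2)=0$, whence $\lambda^1-\lambda^2=2{\rm e}^{\ri^1-\ri^2}\Omega$ with~$\Omega$ a smooth function of finitely many~$\omega$'s (Lemma~\ref{lem:IDFM:DiffFunctionsOfOmega} again), while the other combination forces $\lambda^3={\rm e}^{\ri^1-\ri^2}(\hat{\mathscr A}\Omega)/\omega^1$ and makes the equation on~$\lambda^3$ hold automatically. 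This yields the first family and simultaneously identifies it with the entire space of cosymmetries with $\lambda^1+\lambda^2=0$.

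For an arbitrary cosymmetry, the idea is to subtract an appropriate element of the first family so as to reduce to the case $\lambda^3=0$. Once $\lambda^3=0$, collecting the coefficients of the highest-order~$\omega$'s in the two remaining equations shows that~$\lambda^1$ and~$\lambda^2$ depend on~$\ri^1,\ri^2$ only, i.e.\ $(\lambda^1,\lambda^2)$ is a cosymmetry of the essential subsystem~$\mathcal S_0$, which lifts to~$\mathcal S$ simply by appending a zero~$\ri^3$-component. The cosymmetries of~$\mathcal S_0$ are transported, via the hodograph transformation linearizing~$\mathcal S_0$ and using the operators~$\tilde{\mathscr D}_y$, $\tilde{\mathscr D}_z$ of Section~\ref{sec:IDFMPreliminaries}, to cosymmetries of the Klein--Gordon equation $q_{yz}=q$; here one must keep track of the fact that cosymmetries transform as densities under point transformations, i.e.\ with the inverse-transposed Jacobian and a weight factor, which is exactly what produces the prefactor~${\rm e}^{(\ri^1-\ri^2)/2}$ and the operator~$\tilde{\mathscr D}_y$ in the first components of families~2 and~3. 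Since the Klein--Gordon equation is formally self-adjoint, its cosymmetries coincide with the characteristics of its generalized symmetries, that is, with $\hat{\mathfrak K}^{\rm q}=\hat\Lambda^{\rm q}\lsemioplus\hat{\mathfrak K}^{-\infty}$ by~\cite{OpanasenkoPopovych2018}; transported back, $\hat{\mathfrak K}^{-\infty}$ gives family~2 (with~$\Phi$ the Klein--Gordon solution expressed in the variables~$\ri^1,\ri^2$) and~$\hat\Lambda^{\rm q}$ gives family~3 (with~$\tilde{\mathfrak Q}$ running through the transported operators $\mathscr J^\kappa$, $\mathscr D_y^\iota\mathscr J^\kappa$, $\mathscr D_z^\iota\mathscr J^\kappa$). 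Unlike for generalized symmetries, the whole of~$\hat{\mathfrak K}^{\rm q}$ is realized, because the lift with $\lambda^3=0$ is always admissible -- this is the injection mentioned in the introduction.

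The step I expect to be the main obstacle is the reduction to $\lambda^3=0$, namely proving that the~$\ri^3$-component of an arbitrary cosymmetry must have the form ${\rm e}^{\ri^1-\ri^2}(\hat{\mathscr A}\Omega)/\omega^1$. The operator~$\hat{\mathscr A}$ is not surjective on the algebra of functions of the~$\omega$'s, so this is a genuine solvability (cokernel) condition on the source that~$\lambda^3$ contributes to the equations on~$\lambda^1,\lambda^2$: that source must be orthogonal, in the appropriate sense, to the kernel of the adjoint linearization of~$\mathcal S_0$, equivalently of the Klein--Gordon equation, and this orthogonality is what pins~$\Omega^3$ down to $(\hat{\mathscr A}\Omega)/\omega^1$. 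Carrying this out cleanly -- most naturally by transporting the whole cosymmetry to the supersystem~$\mathcal K$, where the decoupled equation for the~$s$-component is governed by~$\hat{\mathscr B}$ and Corollary~\ref{cor:IDFM:DiffFunctionsOfOmegaHat}, and the solvability of the~$q$-component equation reduces to a source condition already understood for the Klein--Gordon equation -- together with the bookkeeping of the density weights along the chain of point transformations, is where the bulk of the work lies.
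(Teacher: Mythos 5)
Your outline follows the same route as the paper: the adjoint determining system, the substitution $\lambda^3={\rm e}^{\ri^1-\ri^2}\Omega^3$ reducing the decoupled equation to $\mathscr B\Omega^3=0$ (Lemma~\ref{lem:IDFM:DiffFunctionsOfOmega}), the identification of the cosymmetries with $\lambda^1+\lambda^2=0$ with the first family, and the transport of the $\lambda^3=0$ part through the hodograph linearization to the Klein--Gordon equation, whose self-adjointness yields families~2 and~3 from $\hat{\mathfrak K}^{-\infty}$ and $\hat\Lambda^{\rm q}$ respectively, with all of $\hat{\mathfrak K}^{\rm q}$ realized. All of that is sound and agrees with the paper's argument.

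The genuine gap is precisely the step you defer: proving that for an \emph{arbitrary} cosymmetry one can subtract a first-family element so as to annihilate $\lambda^3$, i.e.\ that $\omega^1\Omega^3$ necessarily lies in $\mathop{\rm im}\hat{\mathscr A}$. This is not a formality. For instance, $\Omega^3=\omega^1$ satisfies $\mathscr B\Omega^3=0$ but $\omega^1\Omega^3=(\omega^1)^2\notin\mathop{\rm im}\hat{\mathscr A}$, so one must actually exclude such $\Omega^3$ from occurring as third components of cosymmetries; the ``orthogonality to the kernel of the adjoint linearization'' you invoke is not an available, already-proved solvability criterion -- it is the statement to be established. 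The paper does this by induction on $\ord_\omega(\tilde\lambda^1-\tilde\lambda^2)$: differentiating the sum and difference of the remaining determining equations with respect to the top $\omega^{\kappa+1}$ gives $(\tilde\lambda^1-\tilde\lambda^2)_{\omega^\kappa}=2(\omega^1\tilde\lambda^3)_{\omega^{\kappa+1}}$, which lets one peel off an antiderivative in $\omega^\kappa$ and reduce to the base case $\omega^1\tilde\lambda^3=c={\rm const}$. The base case then requires showing that the inhomogeneous equation $\mathscr D_y\mathscr D_z\beta=\beta+c\,{\rm e}^{2y+2z}(q_{zz}+q_y-q_z-q)$ admits no solution $\beta=\beta[q]$ unless $c=0$; the paper proves this by first establishing that any such $\beta$ is affine in the derivatives of~$q$ and then deriving a coefficient recursion $c_{\kappa+1}=-4(c_\kappa+c_{\kappa-1})$ that is incompatible with the terminal condition $\beta^n_z=0$. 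This nonexistence argument constitutes the bulk of the paper's proof and is entirely absent from your proposal.
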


\begin{proof}
The space~$\hat\Upsilon^{\rm q}$ coincides with the solution space of the system
\begin{subequations}\label{eq:DetEqsForUpsilonq}
\begin{gather}
\mathscr D_t\lambda^1+(\ri^1+\ri^2+1)\mathscr D_x\lambda^1=\ri^2_x(\lambda^2-\lambda^1)+\ri^3_x\lambda^3,\label{eq:DetEqsForUpsilonqA}\\
\mathscr D_t\lambda^2+(\ri^1+\ri^2-1)\mathscr D_x\lambda^2=\ri^1_x(\lambda^1-\lambda^2)+\ri^3_x\lambda^3,\label{eq:DetEqsForUpsilonqB}\\
\mathscr D_t\lambda^3+(\ri^1+\ri^2)\mathscr D_x\lambda^3+(\ri^1_x+\ri^2_x)\lambda^3=0,\label{eq:DetEqsForUpsilonqC}
\end{gather}
\end{subequations}
which is formally adjoint to the system~\eqref{eq:DetEqsForSigmaq} for generalized symmetries of~\eqref{eq:IDFMDiagonalizedSystem}.
The substitution $(\lambda^1,\lambda^2,\lambda^3)={\rm e}^{\ri^1-\ri^2}(\tilde\lambda^1,\tilde\lambda^2,\tilde\lambda^3)$
reduces the system~\eqref{eq:DetEqsForUpsilonq} to
\begin{subequations}\label{eq:DetEqsForUpsilonqMod}
\begin{gather}
\mathscr D_t\tilde\lambda^1+(\ri^1+\ri^2+1)\mathscr D_x\tilde\lambda^1=\ri^2_x(\tilde\lambda^1+\tilde\lambda^2)+\ri^3_x\tilde\lambda^3,\label{eq:DetEqsForUpsilonqModA}\\
\mathscr D_t\tilde\lambda^2+(\ri^1+\ri^2-1)\mathscr D_x\tilde\lambda^2=\ri^1_x(\tilde\lambda^1+\tilde\lambda^2)+\ri^3_x\tilde\lambda^3,\label{eq:DetEqsForUpsilonqModB}\\
\mathscr D_t\tilde\lambda^3+(\ri^1+\ri^2)\mathscr D_x\tilde\lambda^3=0.\label{eq:DetEqsForUpsilonqModC}
\end{gather}
\end{subequations}
Again using the modified coordinates on~$\mathcal S^{(\infty)}$,
we will show below that the general solution 
of the system~\eqref{eq:DetEqsForUpsilonqMod} 
can be represented in the form  
\begin{gather}\label{eq:IDFMCosymsLambda3Representation}
\tilde\lambda^1=\tilde\lambda^{1\rm h}+\Omega,\quad 
\tilde\lambda^2=\tilde\lambda^{2\rm h}-\Omega,\quad
\tilde\lambda^3=\frac{\hat{\mathscr A}\Omega}{\omega^1},
\end{gather}
where $\hat{\mathscr A}=\sum_{\kappa=0}^{\infty}\omega^{\kappa+1}\p_{\omega^\kappa}$,
$\Omega$ runs through the space of smooth functions of a finite, but unspecified number of~$\omega$'s, 
and $(\tilde\lambda^{1\rm h},\tilde\lambda^{2\rm h})$ with $\tilde\lambda^{j\rm h}=\tilde\lambda^{j\rm h}[\ri^1,\ri^2]$, $j=1,2$,
is the general solution of the subsystem~\eqref{eq:DetEqsForUpsilonqModA}--\eqref{eq:DetEqsForUpsilonqModB} with $\tilde\lambda^3=0$,
\begin{gather*}
\mathscr D_t\tilde\lambda^{1\rm h}+(\ri^1+\ri^2+1)\mathscr D_x\tilde\lambda^{1\rm h}=\ri^2_x(\tilde\lambda^{1\rm h}+\tilde\lambda^{2\rm h}),
\\
\mathscr D_t\tilde\lambda^{2\rm h}+(\ri^1+\ri^2-1)\mathscr D_x\tilde\lambda^{2\rm h}=\ri^1_x(\tilde\lambda^{1\rm h}+\tilde\lambda^{2\rm h}).
\end{gather*}
The counterpart $(\lambda^{1\rm h},\lambda^{2\rm h})={\rm e}^{\ri^1-\ri^2}(\tilde\lambda^{1\rm h},\tilde\lambda^{2\rm h})$
of $(\tilde\lambda^{1\rm h},\tilde\lambda^{2\rm h})$ satisfies
the subsystem~\eqref{eq:DetEqsForUpsilonqA}--\eqref{eq:DetEqsForUpsilonqB} with $\lambda^3=0$,
\begin{subequations}\label{eq:DetEqsForUpsilonq0}
\begin{gather}
\mathscr D_t\lambda^{1\rm h}+(\ri^1+\ri^2+1)\mathscr D_x\lambda^{1\rm h}=\ri^2_x(\lambda^{2\rm h}-\lambda^{1\rm h}),\label{eq:DetEqsForUpsilonq0A}\\
\mathscr D_t\lambda^{2\rm h}+(\ri^1+\ri^2-1)\mathscr D_x\lambda^{2\rm h}=\ri^1_x(\lambda^{1\rm h}-\lambda^{2\rm h}).\label{eq:DetEqsForUpsilonq0B}
\end{gather}
\end{subequations}
Therefore, the triple $\lambda=(\lambda^1,\lambda^2,\lambda^3)$ belongs to~$\hat\Upsilon^{\rm q}$
if and only if it can be represented, in the above notation,  in the form
\begin{gather}\label{eq:IDFMCosymsGenForm}
\lambda={\rm e}^{\ri^1-\ri^2}\big(\Omega,-\Omega,(\hat{\mathscr A}\Omega)/\omega^1\big)+(\lambda^{1\rm h},\lambda^{2\rm h},0).
\end{gather}
The substitution $(\lambda^{1\rm h},\lambda^{2\rm h})={\rm e}^{(\ri^1-\ri^2)/2}(\hat\lambda^1,\hat\lambda^2)$
reduces the system~\eqref{eq:DetEqsForUpsilonq0} to the system
\begin{gather*}
\mathscr D_t\hat\lambda^1+(\ri^1+\ri^2+1)\mathscr D_x\hat\lambda^1=\ri^2_x\hat\lambda^2,\\
\mathscr D_t\hat\lambda^2+(\ri^1+\ri^2-1)\mathscr D_x\hat\lambda^2=\ri^1_x\hat\lambda^1,
\end{gather*}
which can be rewritten in terms of the operators~$\tilde{\mathscr D}_y$ and~$\tilde{\mathscr D}_z$,
\[\tilde{\mathscr D}_z\hat\lambda^1=-\hat\lambda^2,\quad \tilde{\mathscr D}_y\hat\lambda^2=-\hat\lambda^1.\]
Therefore, both the components~$\hat\lambda^1$ and~$\hat\lambda^2$ satisfy
the image of the equation~\eqref{eq:IDFMSupersystemGenSymsDetEqA}
under the transformation~\eqref{eq:IDFMInverseToTransReducingToKGEq}
and thus are the reduced forms of the pullbacks of characteristics of generalized vector fields from $\hat{\mathfrak K}^{\rm q}$ by this transformation.
As a result, we obtain the families of cosymmetries of the system~\eqref{eq:IDFMDiagonalizedSystem} 
that are presented in the theorem.
The first and second summands in~\eqref{eq:IDFMCosymsGenForm} correspond to 
the first family and the span of the second and the third families, respectively. 

Now we prove the representation~\eqref{eq:IDFMCosymsLambda3Representation} 
by induction on the order $\ord_\omega(\tilde\lambda^1-\tilde\lambda^2)\in\{-\infty\}\cup\mathbb N_0$.
In view of Lemma~\ref{lem:IDFM:DiffFunctionsOfOmega},
any solution of the equation~\eqref{eq:DetEqsForUpsilonqModC}, which can be shortly rewritten as $\mathscr B\tilde\lambda^3=0$,
is a smooth function of a finite number of~$\omega$'s.
We take the sum and the difference of the equations~\eqref{eq:DetEqsForUpsilonqModA} and~\eqref{eq:DetEqsForUpsilonqModB},
additionally writing them, after multiplying by ${\rm e}^{\ri^2-\ri^1}$, in terms of the operator~$\mathscr A$ and~$\mathscr B$, 
\begin{subequations}\label{eq:DetEqsForUpsilonqPM}
\begin{gather}\label{eq:DetEqsForUpsilonqP}
{\rm e}^{\ri^2-\ri^1}\mathscr B(\tilde\lambda^1+\tilde\lambda^2)+\mathscr A(\tilde\lambda^1-\tilde\lambda^2)
=(\tilde\lambda^1+\tilde\lambda^2)\mathscr A(\ri^1+\ri^2)+2\omega^1\tilde\lambda^3,
\\\label{eq:DetEqsForUpsilonqM}
{\rm e}^{\ri^2-\ri^1}\mathscr B(\tilde\lambda^1-\tilde\lambda^2)+\mathscr A(\tilde\lambda^1+\tilde\lambda^2)
=(\tilde\lambda^1+\tilde\lambda^2)\mathscr A(\ri^2-\ri^1).
\end{gather}
\end{subequations}

\noindent{\it Base case.} 
Let $\ord_\omega(\tilde\lambda^1-\tilde\lambda^2)=-\infty$.
The equation~\eqref{eq:DetEqsForUpsilonqM} implies that $\ord_\omega(\tilde\lambda^1+\tilde\lambda^2)=-\infty$ as well, 
i.e., both $\tilde\lambda^1$ and $\tilde\lambda^2$ do not depend on~$\omega$'s.
Then we obtain from the equation~\eqref{eq:DetEqsForUpsilonqP} that 
the summand $2\omega^1\tilde\lambda^3$ does not depend on~$\omega$'s as well.
Recalling that $\tilde\lambda^3$ depends at most on a finite number of~$\omega$'s, 
we educe that $c:=\omega^1\tilde\lambda^3$ is a constant, i.e., 
$\tilde\lambda^3=c/\omega^1=c{\rm e}^{\ri^1-\ri^2}/\ri^3_x$. 
We substitute $(\lambda^1,\lambda^2)={\rm e}^{(\ri^1-\ri^2)/2}(\hat\lambda^1,\hat\lambda^2)$
into the equations~\eqref{eq:DetEqsForUpsilonqModA} and~\eqref{eq:DetEqsForUpsilonqModB}
and rewrite them in the notation of Section~\ref{sec:IDFMSolutionsViaEssentialSubsystem} as 
\[
\tilde{\mathscr D}_z\hat\lambda^1=-\hat\lambda^2+\frac c2{\rm e}^{\ri^1-\ri^2}K^1,\quad
\tilde{\mathscr D}_y\hat\lambda^2=-\hat\lambda^1-\frac c2{\rm e}^{\ri^1-\ri^2}K^2.
\]
We carry out the transformation~\eqref{eq:IDFMTransReducingToKGEq} 
restricted to the spaces with the coordinates $(t,x,\ri^1,\ri^2)$ and $(y,z,p,q)$ 
and then exclude derivatives of~$p$ in view of the equation~\eqref{eq:IDFMSupersystemC} 
and its differential consequences. 
As a result, we derive the system 
\begin{gather}\label{eq:IDFMCosymsReducedSystemForBaseCase}
\mathscr D_z\breve\lambda^1=-\breve\lambda^2+\frac c2{\rm e}^{2y+2z}K^1,\quad
\mathscr D_y\breve\lambda^2=-\breve\lambda^1-\frac c2{\rm e}^{2y+2z}K^2,
\end{gather}
where the differential function~$\breve\lambda^i=\breve\lambda^i[q]$ is the image of $\hat\lambda^i$
under the above transformation, $i=1,2$, see the notation in Section~\ref{sec:IDFMPreliminaries}.
We solve the first equation of~\eqref{eq:IDFMCosymsReducedSystemForBaseCase} 
with respect to~$\breve\lambda^2$ and substitute the obtained expression 
$\breve\lambda^2=-\mathscr D_z\breve\lambda^1+\frac12c{\rm e}^{2y+2z}K^1$ 
into the second equation, deriving 
\begin{gather}\label{eq:IDFMCosymsReducedEqForBaseCase}
\mathscr D_y\mathscr D_z\beta[q]=\beta[q]+c{\rm e}^{2y+2z}(q_{zz}+q_y-q_z-q).
\end{gather}
with respect to $\beta:=\breve\lambda^1$.
Therefore, the system~\eqref{eq:DetEqsForUpsilonqMod} with $\tilde\lambda^3=c/\omega^1$ 
has a solution if and only if the equation~\eqref{eq:IDFMCosymsReducedEqForBaseCase} has. 
In this way, we reduce the proof in the base case to studying 
the existence of solutions of the equation~\eqref{eq:IDFMCosymsReducedEqForBaseCase}.

Given a differential function~$\alpha=\alpha[q]$ that is affine in totality of involved derivatives of~$q$, 
any solution~$\beta=\beta[q]$ of the equation $\mathscr D_y\mathscr D_z\beta=\beta+\alpha$ 
has the same property. 
Indeed, we fix an arbitrary solution~$\beta$ of~\eqref{eq:IDFMCosymsReducedEqForBaseCase}
and substitute $q=q^0+\varepsilon_1q^1+\varepsilon_2q^2$ into it. 
Here $\varepsilon_1$ and~ $\varepsilon_2$ are constant parameters, 
and  $q^0$, $q^1$ and~$q^2$ are arbitrary solutions of  the equation~\eqref{eq:IDFMSupersystemA}, 
which is the (1+1)-dimensional Klein--Gordon equation for~$q$ in light-cone variables, $q^i_{yz}=q^i$, $i=0,1,2$.
We take the mixed derivative of the equation for~$\beta$ with substituted~$q$ with respect to $(\varepsilon_1,\varepsilon_2)$ 
at $(\varepsilon_1,\varepsilon_2)=(0,0)$ to derive the equation
\begin{gather}\label{eq:IDFMCosymsAuxiliaryEqForAffineness}
\sum_{\iota,\iota'=-\ord\beta}^{\ord\beta}\big(
\mathscr D_y\mathscr D_z(\beta_{q_\iota q_{\iota'}}[q^0]q^1_\iota q^2_{\iota'})
-\beta_{q_\iota q_{\iota'}}[q^0]q^1_\iota q^2_{\iota'}\big)=0,
\end{gather}
which can be split with respect to $\{q_\iota,q_{\iota'},\iota,\iota'=-\ord\beta-1,\dots,\ord\beta+1\}$.
Suppose that $\beta_{q_\iota q_{\iota'}}\ne0$ for some $(\iota,\iota')$. 
Let $\iota_0=\max\big\{\iota\mid\exists\iota'\colon\beta_{q_\iota q_{\iota'}}\ne0\big\}$ and 
$\iota'_0=\min\big\{\iota'\mid\beta_{q_{\iota_0}q_{\iota'}}\ne0\big\}$.
Collecting the coefficients of~$q^1_{\iota_0+1}q^2_{\iota'_0-1}$ in the equation~\eqref{eq:IDFMCosymsAuxiliaryEqForAffineness} 
gives \smash{$\beta_{q_{\iota_0}q_{\iota'_0}}=0$} 
contradicting the inequality \smash{$\beta_{q_{\iota_0}q_{\iota'_0}}\ne0$}.
Therefore, $\beta_{q_\iota q_{\iota'}}=0$ for any $(\iota,\iota')\in\mathbb Z^2$.

In view of the claim proved in the previous paragraph, 
we can represent each fixed solution of the equation~\eqref{eq:IDFMCosymsReducedEqForBaseCase}
in the form \[\beta=\sum_{\iota=-n}^n\beta^\iota(y,z)q_\iota+\beta^{00}(y,z),\] 
where $n:=\ord\beta$, and 
the coefficients $\beta^\iota$, $\iota=-n,\dots,n$, and $\beta^{00}$ are smooth functions of~$(y,z)$. 
Up to adding a solution of the homogeneous counterpart of the equation~\eqref{eq:IDFMCosymsReducedEqForBaseCase}, 
without loss of generality we can assume that $n>2$ and $\beta^{00}=0$.
The equation~\eqref{eq:IDFMCosymsReducedEqForBaseCase} splits into the following system 
for the coefficients of~$\beta$:
\begin{gather*}
\Delta_{-n-1}\colon\ \beta^{-n}_y=0,\quad
\Delta_{-n}\colon\ \beta^{-n+1}_y=0,\\
\Delta_{\kappa-n}\colon\ \beta^{\kappa-n-1}_z+\beta^{\kappa-n}_{yz}+\beta^{\kappa-n+1}_y=\alpha^{\kappa-n},\quad \kappa=1,\dots,2n-1,\\
\Delta_n\colon\ \beta^{n-1}_z=0,\quad
\Delta_{n+1}\colon\ \beta^n_z=0,
\end{gather*}
where $\alpha^{-2}=-\alpha^{-1}=-\alpha^0=\alpha^1=c{\rm e}^{2y+2z}$ and the other $\alpha^\iota$ are zero. 
The equation $\Delta_\iota$ is constituted by the coefficients of~$q_\iota$.
For each $\kappa\in\{1,\dots,2n-1\}$,
we solve the equation~$\Delta_{\kappa-n}$ with respect to $\beta^{\kappa-n+1}_y$, 
differentiate the result $\kappa$ times with respect to~$y$, 
\[
\frac{\p^{\kappa+1}\beta^{\kappa-n+1}}{\p y^{\kappa+1}}=\frac{\p^\kappa\alpha^{\kappa-n}}{\p y^\kappa}
-\frac{\p^{\kappa+1}\beta^{\kappa-n-1}}{\p y^\kappa\p z}-\frac{\p^{\kappa+2}\beta^{\kappa-n}}{\p y^{\kappa+1}\p z}
\]
and substitute for the last two derivatives in view of differential consequences of the previous equations. 
As a result, we obtain 
\[\frac{\p^{\kappa+1}\beta^{\kappa-n+1}}{\p y^{\kappa+1}}=c_\kappa{\rm e}^{2y+2z},\] 
where $c_\kappa=0$, $\kappa=1,\dots,n-3$, 
$c_{n-2}=2^{n-2}c$, $c_{n-1}=-3\cdot2^{n-1}c$, $c_n=2^{n+2}c$, and $c_{n+1}=-2^{n+3}c$. 

We can prove by induction on~$\kappa$ that $c_\kappa=(-1)^{\kappa-n}2^{\kappa+2}c$, $\kappa=n+1,\dots,2n-1$.
The base case $\kappa=n+1$ is given by the above equality $c_{n+1}=-2^{n+3}c$, 
and the induction step follows from the equality $c_{\kappa+1}=-4(c_\kappa+c_{\kappa-1})$ for $\kappa>n+1$. 
Therefore, the equation $\Delta_{n+1}$: $\beta^n_z=0$ implies 
that $c=0$, and we obtain the representation~\eqref{eq:IDFMCosymsLambda3Representation} with $\Omega=0$. 

\medskip\par\noindent{\it Induction step.} 
Suppose that the representation~\eqref{eq:IDFMCosymsLambda3Representation} holds 
if $\ord_\omega(\tilde\lambda^1-\tilde\lambda^2)<\kappa\in\mathbb N_0$ 
and prove this representation for $\ord_\omega(\tilde\lambda^1-\tilde\lambda^2)=\kappa$.
In view of the equation~\eqref{eq:DetEqsForUpsilonqM}, 
under the last condition we have $\ord_\omega(\tilde\lambda^1+\tilde\lambda^2)<\kappa$.
Then the equation~\eqref{eq:DetEqsForUpsilonqP} implies that $\ord_\omega(\omega^1\tilde\lambda^3)=\kappa+1$. 
Differentiating the equation~\eqref{eq:DetEqsForUpsilonqP} with respect to~$\omega^{\kappa+1}$, 
we derive $(\tilde\lambda^1-\tilde\lambda^2)_{\omega^\kappa}=2(\omega^1\tilde\lambda^3)_{\omega^{\kappa+1}}$, 
and thus both the left- and the right-hand sides of the last equality depends at most on $\omega^0$,~\dots,~$\omega^\kappa$, 
i.e., there exists a smooth function $\Upsilon=\Upsilon(\omega^0,\dots,\omega^\kappa)$ 
such that \[(\tilde\lambda^1-\tilde\lambda^2)_{\omega^\kappa}=2(\omega^1\tilde\lambda^3)_{\omega^{\kappa+1}}=2\Upsilon.\]
Let $\breve\Upsilon=\breve\Upsilon(\omega^0,\dots,\omega^\kappa)$ 
be a fixed antiderivative of~$\Upsilon$ with respect to~$\omega^\kappa$, $\breve\Upsilon_{\omega^\kappa}=\Upsilon$. 
Define
\[
\breve\lambda^1:=\tilde\lambda^1-\breve\Upsilon,\quad 
\breve\lambda^2:=\tilde\lambda^2+\breve\Upsilon,\quad
\breve\lambda^3:=\tilde\lambda^3-\frac{\hat{\mathscr A\breve\Upsilon}}{\omega^1}.
\]
The tuple $(\breve\lambda^1,\breve\lambda^2,\breve\lambda^3)$ satisfies the system~\eqref{eq:DetEqsForUpsilonqMod}, 
and \[(\breve\lambda^1-\breve\lambda^2)_{\omega^\kappa}=(\tilde\lambda^1-\tilde\lambda^2)_{\omega^\kappa}-2\Upsilon=0,\] 
i.e., \mbox{$\breve\kappa:=\ord_\omega(\breve\lambda^1-\breve\lambda^2)<\kappa$}.
By the induction hypothesis, this tuple can be represented in the form~\eqref{eq:IDFMCosymsLambda3Representation} 
with some smooth function~$\breve\Omega=\breve\Omega(\omega^0,\dots,\omega^{\breve\kappa})$. 
Setting $\Omega=\breve\Omega+\breve\Upsilon$, 
we derive the representation~\eqref{eq:IDFMCosymsLambda3Representation} for 
$(\tilde\lambda^1,\tilde\lambda^2,\tilde\lambda^3)$ with the same~$\lambda^{1\rm h}$ and~$\lambda^{2\rm h}$.
\end{proof}

\begin{remark}\label{rem:IDFM:IntersectionOfCosymSubspaces}
The first and the second families from Theorem~\ref{thm:IDFM:Cosyms}, which are linear spaces,
are not disjoint in the sense of linear spaces.
Their intersection is one-dimensional and is spanned by the cosymmetry ${\rm e}^{\ri^1-\ri^2}(1,-1,0)$
corresponding to $\Omega=1$ and $\Phi=-{\rm e}^{(\ri^1-\ri^2)/2}$.
The span of these two families has the zero intersection with the span of the third family.
\end{remark}

\section{Conservation laws}\label{sec:IDFM:CLs}

\begin{theorem}\label{thm:IDFM:CLs}
The space of conservation laws of the system~\eqref{eq:IDFMDiagonalizedSystem}
is naturally isomorphic to the space spanned by the following conserved currents of this system:
\begin{enumerate}
\item
$\big(\,{\rm e}^{\ri^1-\ri^2}\Omega,\ (\ri^1+\ri^2){\rm e}^{\ri^1-\ri^2}\Omega\,\big)$,
where the parameter function~$\Omega$ runs through the space of smooth functions
of a finite, but unspecified number of \smash{$\omega^\kappa=({\rm e}^{\ri^2-\ri^1}\mathscr D_x)^\kappa\ri^3$}, $\kappa\in\mathbb N_0$,
and such two functions should be assumed equivalent if their difference
belongs to the image of the operator $\hat{\mathscr A}=\sum_{\kappa=0}^{\infty}\omega^{\kappa+1}\p_{\omega^\kappa}$.
\item
$\big(\,{\rm e}^{(\ri^1-\ri^2)/2}(2\Phi_{\ri^1}+\Phi),\
{\rm e}^{(\ri^1-\ri^2)/2}(2(\ri^1+\ri^2+1)\Phi_{\ri^1}+(\ri^1+\ri^2-1)\Phi)\,\big)$,
where the parameter function~$\Phi=\Phi(\ri^1,\ri^2)$
runs through the solution space of the Klein--Gordon equation $\Phi_{\ri^1\ri^2}=-\Phi/4$.
\item
$\big(\ri^2_x\tilde\rho+\ri^1_x\tilde\sigma,\ (\ri^1+\ri^2-1)\ri^2_x\tilde\rho+(\ri^1+\ri^2+1)\ri^1_x\tilde\sigma\big)$
with
$\tilde\rho=-\tilde q\tilde{\mathscr D}_z\tilde{\mathfrak Q}\tilde q$,
$\tilde\sigma=(\tilde{\mathscr D}_y\tilde q)\tilde{\mathfrak Q}\tilde q$,
where
the operator~$\tilde{\mathfrak Q}$ runs through the set
\begin{gather*}
\big\{\,\tilde{\mathscr J}^{\kappa'}\!,\,\kappa'\in2\mathbb N_0+1,\
(\tilde{\mathscr J}+\iota/2)^\kappa\tilde{\mathscr D}_y^\iota,\,
(\tilde{\mathscr J}-\iota/2)^\kappa\tilde{\mathscr D}_z^\iota,\,\kappa\in\mathbb N_0,\,\iota\in\mathbb N,\,\kappa+\iota\in2\mathbb N_0+1\,\big\},
\\
\hspace*{-\mathindent}\mbox{and}
\\[-1ex]
\tilde{\mathscr D}_y:=-\frac1{\ri^1_x}\big(\mathscr D_t+(\ri^1+\ri^2-1)\mathscr D_x\big),\quad
\tilde{\mathscr D}_z:=-\frac1{\ri^2_x}\big(\mathscr D_t+(\ri^1+\ri^2+1)\mathscr D_x\big),
\\
\tilde{\mathscr J}:=\frac{\ri^1}2\tilde{\mathscr D}_y+\frac{\ri^2}2\tilde{\mathscr D}_z,\quad
\tilde q:={\rm e}^{(\ri^1-\ri^2)/2}\big(x-(\ri^1+\ri^2+1)t\big).
\end{gather*}
\end{enumerate}
\end{theorem}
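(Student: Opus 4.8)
The plan is to obtain the space of conservation laws from the space of its characteristics. Since the system~\eqref{eq:IDFMDiagonalizedSystem} is of evolution form, the map sending a conserved current with $t$-derivative-free density $\rho=\rho[\ri]$ to its characteristic $\lambda=\mathrm E(\rho)$, where $\mathrm E$ is the Euler operator, is an isomorphism between the space of conservation laws and the space of conservation-law characteristics, cf.~\cite{Olver1993}. This latter space is the subspace of the space~$\hat\Upsilon^{\rm q}$ of cosymmetries that is cut out by the Helmholtz (self-adjointness) conditions. Hence, given Theorem~\ref{thm:IDFM:Cosyms}, the proof reduces to singling out the characteristics among the three cosymmetry families listed there and then reconstructing, for each characteristic, a conserved current, e.g.\ by the standard homotopy formula for the density.

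For the first cosymmetry family, $\lambda={\rm e}^{\ri^1-\ri^2}\big(\Omega,-\Omega,(\hat{\mathscr A}\Omega)/\omega^1\big)$, I would first check by a direct computation that $\big({\rm e}^{\ri^1-\ri^2}\Omega,\,(\ri^1+\ri^2){\rm e}^{\ri^1-\ri^2}\Omega\big)$ is a conserved current of~\eqref{eq:IDFMDiagonalizedSystem} for every smooth function~$\Omega$ of a finite number of $\omega$'s: writing $\rho:={\rm e}^{\ri^1-\ri^2}\Omega$, $\sigma:=(\ri^1+\ri^2)\rho$, one has $\mathscr D_t\rho+\mathscr D_x\sigma=\mathscr B\rho+(\ri^1_1+\ri^2_1)\rho$, which vanishes because $\mathscr B\Omega=0$ and $\mathscr B{\rm e}^{\ri^1-\ri^2}=-{\rm e}^{\ri^1-\ri^2}(\ri^1_1+\ri^2_1)$. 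Next I would show that two such currents are equivalent exactly when their parameter functions differ by an element of the image of~$\hat{\mathscr A}$: if $\Omega=\hat{\mathscr A}\tilde\Omega$, then ${\rm e}^{\ri^1-\ri^2}\Omega=\mathscr D_x\tilde\Omega$, so the current is a null divergence, while for the converse one analyses, with the aid of Lemma~\ref{lem:IDFM:DiffFunctionsOfOmega}, when the density of a current of this shape is a total $x$-derivative. This produces the first family of the theorem with its stated equivalence and accounts for all conservation laws whose characteristic lies in the first cosymmetry family.

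For the second and the third cosymmetry families, which involve only~$\ri^1$ and~$\ri^2$ and descend from cosymmetries of the (1+1)-dimensional Klein--Gordon equation~\eqref{eq:IDFMSupersystemA}, I would transfer the problem to the supersystem~\eqref{eq:IDFMSupersystem} via the point transformation~\eqref{eq:IDFMTransReducingToKGEq} (prolonged to~$p$ through~\eqref{eq:IDFMSupersystemC}), since point transformations induce an isomorphism of spaces of conservation laws and respect the correspondence with characteristics. The conservation laws of $q_{yz}=q$ were exhaustively described in~\cite{OpanasenkoPopovych2018}; they consist of the bilinear ones, parameterized by solutions~$\Phi$ of this equation and all being characteristics, and the higher-order ones built from the operators $\mathscr J^\kappa$, $\mathscr D_y^\iota\mathscr J^\kappa$, $\mathscr D_z^\iota\mathscr J^\kappa$ (with $\mathscr J=y\mathscr D_y-z\mathscr D_z$, $\mathscr J^*=-\mathscr J$), among which the self-adjointness condition retains only those with the parity indicated in the third family of the statement. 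Pulling these conservation laws back through~\eqref{eq:IDFMInverseToTransReducingToKGEq} gives the second (from the bilinear ones) and the third (from the higher-order ones) families of the theorem, the restrictions $\kappa'\in2\mathbb N_0+1$ and $\kappa+\iota\in2\mathbb N_0+1$ being inherited from the Klein--Gordon side.

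It then remains to assemble the three families. Completeness is automatic: any conservation law of~\eqref{eq:IDFMDiagonalizedSystem} has a characteristic, which is a cosymmetry and hence, by Theorem~\ref{thm:IDFM:Cosyms}, belongs to one of the three cosymmetry families, so the reconstruction above shows that the conservation law is equivalent to one in the corresponding family. I expect the main obstacle to be the selection of characteristics among the second and third cosymmetry families, i.e.\ the classification of the conservation laws of the Klein--Gordon equation together with the exact parity conditions that the self-adjointness requirement imposes, which we import from~\cite{OpanasenkoPopovych2018}. A secondary technical point is the precise description of the equivalence on~$\Omega$ in the first family --- that the kernel of the characteristic map restricted to it equals the image of~$\hat{\mathscr A}$ --- and the verification that the spans of the three families overlap only as dictated by the corresponding intersections of cosymmetry families, cf.\ Remark~\ref{rem:IDFM:IntersectionOfCosymSubspaces}.
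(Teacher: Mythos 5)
Your route is the ``standard'' characteristic-first approach: identify conservation laws with their characteristics via the Euler operator, cut the characteristics out of the cosymmetry space of Theorem~\ref{thm:IDFM:Cosyms} by Helmholtz-type conditions, and reconstruct densities. This is genuinely different from the paper's proof, which never goes through characteristics at this stage: it works directly with reduced conserved currents, passes to the modified pair $(\breve\rho,\breve\sigma)=({\rm e}^{\ri^2-\ri^1}\rho,\,\sigma-(\ri^1+\ri^2)\rho)$ so that the conservation condition becomes $\mathscr B\breve\rho+\mathscr A\breve\sigma=0$, and runs an induction on $\max(\ord_\omega\breve\rho,\ord_\omega\breve\sigma)$ to split off the $\omega$-dependent part before pulling the remainder back from the Klein--Gordon equation. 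The Conclusion explicitly notes that the characteristic-based approach ``is not effective for the system~$\mathcal S$ since its application to~$\mathcal S$ leads to too cumbersome computations''; that alone would make your route harder, not wrong, but as written it also has a real gap.

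The gap is the assertion that ``completeness is automatic.'' First, a conservation-law characteristic lies in the \emph{span} of the three cosymmetry families, which is not a direct sum (Remark~\ref{rem:IDFM:IntersectionOfCosymSubspaces}); since the $\omega^\kappa$ with $\kappa\geqslant1$ depend on $\ri^1$, $\ri^2$ and their derivatives, the Helmholtz conditions on a sum $\lambda_1+\lambda_2+\lambda_3$ do not obviously decouple into conditions on the summands, and you give no argument for this. Second, and more seriously, for the first family you never determine which cosymmetries ${\rm e}^{\ri^1-\ri^2}\big(\Omega',-\Omega',(\hat{\mathscr A}\Omega')/\omega^1\big)$ are actually characteristics, nor that each one is realized by a current of the stated form. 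By Remark~\ref{rem:IDFM:CorrespondenceBetweenCosymsAndCLChars} the characteristics form a \emph{proper} subspace of this cosymmetry family (those with $\Omega'\in\mathop{\rm im}\mathsf E$ in the notation of Remark~\ref{rem:IDFM:ImageOfA}); identifying this subspace and proving surjectivity of the map $\Omega\mapsto$ characteristic onto it is an inverse problem of the calculus of variations in the formal variables $\omega^\kappa$ --- a local-exactness statement for a variational complex in variables that are not jet coordinates of an actual independent variable --- which your appeal to ``the standard homotopy formula'' does not supply. The paper's induction on $\ord_\omega$ of the modified current is precisely the device that replaces this missing argument. (Two minor slips: the Klein--Gordon conserved currents parameterized by solutions are linear, not bilinear, in~$q$; and the operators in the third family carry the shifts $\pm\iota/2$ produced by skew-adjointization, which a reconstruction from characteristics must reproduce.)
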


\begin{proof}
We compute the space of local conservation laws of the system~\eqref{eq:IDFMDiagonalizedSystem}
combining the direct method of finding conservation laws~\cite{PopovychIvanova2005b,Wolf2002},
which is based on the definition of conserved currents,
with using the linearization of the essential subsystem~\eqref{eq:IDFMDEq1}--\eqref{eq:IDFMDEq2}
to the (1+1)-dimensional Klein--Gordon equation.
Up to the equivalence of conserved currents,
meaning that they coincide on the solution set of the corresponding system of differential equations,
it suffices to consider only reduced conserved currents of the system~\eqref{eq:IDFMDiagonalizedSystem},
which are of the form $(\rho,\sigma)$, where $\rho=\rho[\ri]$ and $\sigma=\sigma[\ri]$.
A tuple $(\rho[\ri],\sigma[\ri])$ is a conserved current of the system~\eqref{eq:IDFMDiagonalizedSystem}
if and only if $\mathscr D_t\rho+\mathscr D_x\sigma=0$.
We should also take into account the equivalence of conserved currents up to adding null divergences,
which means that conserved currents $(\rho[\ri],\sigma[\ri])$ and $(\rho'[\ri],\sigma'[\ri])$
belong to the same conservation law if and only if
there exists a differential function $f=f[\ri]$
such that $\rho'=\rho+\mathscr D_xf$ and $\sigma'=\sigma-\mathscr D_tf$.

We associate an arbitrary reduced conserved current $(\rho[\ri],\sigma[\ri])$
of the system~\eqref{eq:IDFMDiagonalizedSystem}
with the modified density $\breve\rho:={\rm e}^{\ri^2-\ri^1}\rho$
and the modified flux $\breve\sigma=\sigma-(\ri^1+\ri^2)\rho$, i.e.,
\[(\rho,\sigma)=\big({\rm e}^{\ri^1-\ri^2}\breve\rho,(\ri^1+\ri^2){\rm e}^{\ri^1-\ri^2}\breve\rho+\breve\sigma\big),\]
and $\mathscr D_t\rho+\mathscr D_x\sigma={\rm e}^{\ri^1-\ri^2}(\mathscr B\breve\rho+\mathscr A\breve\sigma)$.
Therefore, the equality $\mathscr D_t\rho+\mathscr D_x\sigma=0$ for conserved currents
is equivalent to the equality $\mathscr B\breve\rho+\mathscr A\breve\sigma=0$
for modified conserved currents,
and the equivalence of conserved currents up to adding a null divergence
is modified to $\breve\rho'=\breve\rho+\mathscr Af$ and $\breve\sigma'=\breve\sigma-\mathscr Bf$.

Fixing a reduced conserved current $(\rho[\ri],\sigma[\ri])$
and using the modified coordinates on~$\mathcal S^{(\infty)}$,
we define $\kappa:=\max(\ord_\omega\breve\rho,\ord_\omega\breve\sigma)$
and prove by mathematical induction with respect to~$\kappa\in\{-\infty\}\cup\mathbb N_0$ that
up to adding a modified null divergence we have the representation
$\breve\rho=\breve\rho^1[r^1,r^2]+\breve\rho^0(\omega^0,\dots,\omega^\kappa)$
for some differential functions
$\breve\rho^0=\breve\rho^0(\omega^0,\dots,\omega^\kappa)$ and $\breve\rho^1=\breve\rho^1[r^1,r^2]$,
and $\breve\sigma=\breve\sigma[r^1,r^2]$.

The base case $\kappa=-\infty$ is obvious.

For the inductive step, we fix $\kappa\in\mathbb N_0$,
suppose that the above claim is true for all $\kappa'<\kappa$ and prove it for~$\kappa$.
Collecting coefficients of~$\omega^{\kappa+1}$ in the equality $\mathscr B\breve\rho+\mathscr A\breve\sigma=0$,
we derive $\breve\sigma_{\omega^\kappa}=0$, i.e., in fact $\ord_\omega\breve\sigma<\kappa$.
Then we differentiate the same equality twice with respect to $\omega^\kappa$,
which leads to $\mathscr B\breve\rho_{\omega^\kappa\omega^\kappa}=0$.
In view of Lemma~\ref{lem:IDFM:DiffFunctionsOfOmega},
this means that the $\breve\rho_{\omega^\kappa\omega^\kappa}$ can depend at most on $(\omega^0,\dots,\omega^\kappa)$.
Therefore, there exist differential functions
$\breve\rho^{10}=\breve\rho^{10}(\omega^0,\dots,\omega^\kappa)$,
$\breve\rho^{11}=\breve\rho^{11}[\ri]$ and $\breve\rho^{12}=\breve\rho^{12}[\ri]$ such that
$\ord_\omega\breve\rho^{11}<\kappa$, $\ord_\omega\breve\rho^{12}<\kappa$ and
$\breve\rho=\breve\rho^{12}[\ri]\omega^\kappa+\breve\rho^{11}[\ri]+\breve\rho^{10}(\omega^0,\dots,\omega^\kappa)$.
Since $\mathscr B\breve\rho^{10}=0$, the tuple $(\breve\rho^{10},0)$
is a modified conserved current of the system~\eqref{eq:IDFMDiagonalizedSystem}.
Hence the tuple $(\breve\rho^{12}\omega^\kappa+\breve\rho^{11},\breve\sigma)$
is a modified conserved current of this system as well.
Adding the modified null divergence
$(-\mathscr A\int\breve\rho^{12}\,{\rm d}\omega^{\kappa-1},
   \mathscr B\int\breve\rho^{12}\,{\rm d}\omega^{\kappa-1})$
to the latter modified conserved current,
we obtain an equivalent modified conserved current $(\breve\rho',\breve\sigma')$
with $\max(\ord_\omega\breve\rho',\ord_\omega\breve\sigma')<\kappa$.
The induction hypothesis implies that up to adding a modified null divergence,
the component~$\breve\rho'$ admits the representation
$\breve\rho'=\breve\rho^{21}[r^1,r^2]+\breve\rho^{20}(\omega^0,\dots,\omega^\kappa)$
for some differential functions
$\breve\rho^{20}=\breve\rho^{20}(\omega^0,\dots,\omega^\kappa)$ and $\breve\rho^{21}=\breve\rho^{21}[r^1,r^2]$,
and $\breve\sigma'=\breve\sigma'[r^1,r^2]$.
Setting $\breve\rho^0=\breve\rho^{10}+\breve\rho^{20}$, $\breve\rho^1=\breve\rho^{21}$
and $\breve\sigma=\breve\sigma'$, we complete the inductive step.

In other words, we have proved that up to adding a null divergence,
any conserved current of the system~\eqref{eq:IDFMDiagonalizedSystem}
can be represented as the sum of a conserved current from the first theorem's family
and of a conserved current of the form $(\rho[r^1,r^2],\sigma[r^1,r^2])$.
The subspace of conserved currents of the latter forms
is the pullback of the space of reduced conserved currents
of the essential subsystem~\eqref{eq:IDFMDEq1}--\eqref{eq:IDFMDEq2}
by the projection $(t,x,\ri)\to(t,x,\ri^1,\ri^2)$; cf.\ \cite[Proposition~3]{KunzingerPopovych2008}.
The latter space is naturally isomorphic to the space of conservation laws
of the essential subsystem~\eqref{eq:IDFMDEq1}--\eqref{eq:IDFMDEq2},
which is the pullback of the space of conservation laws
of the Klein--Gordon equation~\eqref{eq:IDFMSupersystemA}
with respect to the composition of the restriction
of the transformation~\eqref{eq:IDFMTransReducingToKGEq} to the space with coordinates $(t,x,\ri^1,\ri^2)$
(i.e., the $s$-component of this transformation should be neglected)
with the projection $(y,z,q,p)\to(y,z,q)$.
We take the space of conservation laws of the (1+1)-dimensional Klein--Gordon equation,
which was constructed in~\cite{OpanasenkoPopovych2018},
and perform the above pullbacks,
\[
\rho=-\frac12(\ri^2_x\tilde\rho_{\text{\tiny\rm KG}}+\ri^1_x\tilde\sigma_{\text{\tiny\rm KG}}),\quad
\sigma=-\frac12(V^2\ri^2_x\tilde\rho_{\text{\tiny\rm KG}}+V^1\ri^1_x\tilde\sigma_{\text{\tiny\rm KG}}),
\]
where $\tilde\rho_{\text{\tiny\rm KG}}$ and $\tilde\sigma_{\text{\tiny\rm KG}}$ are,
as differential functions, the pullbacks
of the density~$\rho_{\text{\tiny\rm KG}}$ and the flux~$\sigma_{\text{\tiny\rm KG}}$
of a conserved current of~\eqref{eq:IDFMSupersystemA}, respectively;
see \cite[Section~III]{PopovychIvanova2005b} or \cite[Proposition~1]{PopovychKunzingerIvanova2008}.
As a result, we obtain, up to the equivalence on solutions of the system~\eqref{eq:IDFMDiagonalizedSystem}
and up to rescaling of conserved currents,
the other families of the conserved currents of this system that are presented in the theorem.

More specifically, the equation~\eqref{eq:IDFMSupersystemA} is
the Euler--Lagrange equation for the Lagrangian $\mathrm L=-(q_yq_z+q^2)/2$.
Hence characteristics of generalized symmetries of this equation are also its cosymmetries, and vice versa.
The quotient algebra~$\mathfrak K^{\rm q}=\mathfrak K/\mathfrak K^{\rm triv}$ of generalized symmetries of~\eqref{eq:IDFMSupersystemA},
where $\mathfrak K$ and~$\mathfrak K^{\rm triv}$ are
the algebra (of evolutionary representatives) of generalized symmetries of the Lagrangian~\eqref{eq:IDFMSupersystemA}
and its ideal of trivial generalized symmetries,
is naturally isomorphic to the algebra $\tilde{\mathfrak K}^{\rm q}=\tilde\Lambda^{\rm q}\lsemioplus\tilde{\mathfrak K}^{-\infty}$,
which is the semidirect sum of the subalgebra
\[
\tilde\Lambda^{\rm q}:=
\big\langle\,(\mathrm J^\kappa q)\p_q,\,(\mathrm J^\kappa\mathrm D_y^\iota q)\p_q,\,(\mathrm J^\kappa\mathrm D_z^\iota q)\p_q,\,\kappa\in\mathbb N_0,\,\iota\in\mathbb N\big\rangle
\]
with the abelian ideal $\tilde{\mathfrak K}^{-\infty}:=\{f(y,z)\p_q\mid f\in{\rm KG}\}$ \cite[Theorem~4]{OpanasenkoPopovych2018}.
Here  $\mathrm D_y$ and $\mathrm D_z$ are the operators of total derivatives in~$y$ and~$z$, respectively,
and $\mathrm J:=y\mathrm D_y-z\mathrm D_z$.
Denote by $\Upsilon$, $\Upsilon^{\rm triv}$ and $\Upsilon^{\rm q}$
the algebra (of evolutionary representatives) of variational symmetries of the Lagrangian~$\mathrm L$,
its ideal of trivial variational symmetries and
the quotient algebra of variational symmetries of this Lagrangian, i.e.,
$\Upsilon\subset\mathfrak K$,
$\Upsilon^{\rm triv}:=\Upsilon\cap\mathfrak K^{\rm triv}$ and
$\Upsilon^{\rm q}:=\Upsilon/\Upsilon^{\rm triv}$.
The quotient algebra~$\Upsilon^{\rm q}$ is naturally isomorphic to
the algebra $\tilde\Upsilon^{\rm q}=\tilde\Lambda^{\rm q}_-\lsemioplus\tilde\Sigma^{-\infty}$,
where
\begin{gather*}
\tilde\Lambda^{\rm q}_-:=\big\langle\,(\mathfrak Q_{\kappa'0}q)\p_q,\,\kappa'\in2\mathbb N_0+1,\
(\mathfrak Q_{\kappa\iota}q)\p_q,\,(\bar{\mathfrak Q}_{\kappa\iota}q)\p_q,\, \kappa\in\mathbb N_0,\,\iota\in\mathbb N,\,\kappa+\iota\in2\mathbb N_0+1\,\big\rangle
\\[1ex]
\hspace*{-\mathindent}\mbox{with}
\\[-1ex]
\mathfrak Q_{\kappa\iota}=\left(\mathrm J+\frac \iota2\right)^\kappa\mathrm D_y^\iota, \quad \kappa,\iota\in\mathbb N_0,\qquad
\bar{\mathfrak Q}_{\kappa\iota}=\left(\mathrm J-\frac \iota2\right)^\kappa\mathrm D_z^\iota, \quad \kappa\in\mathbb N_0,\quad \iota\in\mathbb N,
\end{gather*}
is the subspace of~$\tilde\Lambda^{\rm q}$ that is associated with the space of formally skew-adjoint differential operators
generated by $\mathrm D_y$, $\mathrm D_z$ and $\mathrm J$.
Note that in the context of Noether's theorem, we need to consider the algebra~$\tilde{\mathfrak K}^{\rm q}$
instead of the algebra~$\hat{\mathfrak K}^{\rm q}$ of reduced generalized symmetries of~\eqref{eq:IDFMSupersystemA},
which is mentioned in Section~\ref{sec:IDFMGenSyms},
since cosets of~$\Upsilon^{\rm triv}$ in~$\Upsilon$
do not necessarily intersect the algebra~$\hat{\mathfrak K}^{\rm q}$,
see Remark~9 in~\cite{OpanasenkoPopovych2018}.
The space of conservation laws of~\eqref{eq:IDFMSupersystemA}
is naturally isomorphic to the space spanned by the conserved currents
\[
\bar{\rm C}^0_f=(-f_zq,fq_y), \quad
{\rm C}_{\mathfrak Q}=(-q\mathrm D_z\mathfrak Qq,\,q_y\mathfrak Qq),
\]
where the parameter function $f=f(y,z)$ runs through the solution set of~\eqref{eq:IDFMSupersystemA},
and the operator~$\mathfrak Q$ runs through the basis of~$\tilde\Lambda^{\rm q}_-$
\cite[Proposition~10]{OpanasenkoPopovych2018}.
The conserved current~\smash{$\bar{\rm C}^0_f$} is equivalent to
the conserved current ${\rm C}^0_f=(fq_z,-f_y q)$.

We map conserved currents of the form~${\rm C}_{\mathfrak Q}$,
where $\mathfrak Qq\p_q$ runs through the basis of~$\tilde\Lambda^{\rm q}_-$,
to conserved currents of the system~\eqref{eq:IDFMDiagonalizedSystem},
which leads to the third family of the theorem.
Possible modifications of the form of these conserved currents up to recombining them and adding null divergences
are discussed in Remark~\ref{rem:IDFM:3rdFamilyOfCLs} below.

At the same time, it is convenient to modify conserved currents of the form~\smash{$\bar{\rm C}^0_f$}
before their mapping in order to directly obtain hydrodynamic conservation laws.%
\footnote{%
Recall that a conservation law is called \textit{hydrodynamic}
if its density~$\rho$ is a function of dependent variables only.
}
We reparameterize these conserved currents,
representing the parameter function~$f$ in the form $f=\bar f_y+\bar f_z+2\bar f$,
where the function $\bar f=\bar f(y,z)$ also runs through the solution set of the (1+1)-dimensional Klein--Gordon equation~\eqref{eq:IDFMSupersystemA}.
Then $f_z=\bar f_{zz}+2\bar f_z+\bar f$.
Adding the null divergence $(\mathrm D_zR,-\mathrm D_yR)$ with $R:=\bar f q_z-\bar f_zq-2\bar fq$ to \smash{$-\bar{\rm C}^0_f$},
we obtain the equivalent conserved current $(\bar f K_1,-\bar f_y K_2)$,
which is mapped to the conserved current from the second family 
with $\Phi=\bar f(\ri^1/2,-\ri^2/2)$.

Note that the first and second theorem's families are in fact subspaces
in the space of conserved currents of the system~\eqref{eq:IDFMDiagonalizedSystem}.
Analyzing the equivalence of modified conserved currents, we see that
conserved currents from the first theorem's family are equivalent
if and only if the difference of corresponding~$\Omega$'s
belongs to the image of the operator $\hat{\mathscr A}=\sum_{\kappa=0}^{\infty}\omega^{\kappa+1}\p_{\omega^\kappa}$.
The intersection of the first and the second families is one-dimensional
and spanned by the conserved current $\big(\,{\rm e}^{\ri^1-\ri^2},\,(\ri^1+\ri^2){\rm e}^{\ri^1-\ri^2}\,\big)$.
The sum of these two families does not intersect the span of the third family.
The equivalence of conserved currents within the span of all the three families
is generated by the equivalence of conserved currents within the first family.
\end{proof}

\begin{remark}\label{rem:IDFM:ImageOfA}
The kernel $\ker\mathsf E$ of the operator
$\mathsf E=\sum_{\kappa=1}^\infty\sum_{\kappa'=0}^{\kappa-1}\omega^{\kappa-\kappa'}(-\hat{\mathscr A})^{\kappa'}\p_{\omega^\kappa}-1$
is contained in the kernel $\ker\mathsf E'$ of the operator
$\mathsf E'=\sum_{\kappa=0}^{\infty}(-\hat{\mathscr A})^\kappa\p_{\omega^\kappa}$, $\ker\mathsf E\subset\ker\mathsf E'$,
since the operator identity $\hat{\mathscr A}\mathsf E=-\omega^1\mathsf E'$ holds.
In view of \cite[Theorem 4.26]{Olver1993},
Theorem~\ref{thm:IDFM:CLChars} below implies that (locally) the image of the operator $\hat{\mathscr A}$
coincides with $\ker\mathsf E\cap\ker\mathsf E'=\ker\mathsf E$.
The kernel $\ker\mathsf E'$ of~$\mathsf E'$ is spanned by the constant function~1 and the image of~$\hat{\mathscr A}$.
Hence $\mathop{\rm im}\hat{\mathscr A}=\ker\mathsf E\varsubsetneq\ker\mathsf E'$.
\end{remark}

\begin{remark}
The conserved currents from Theorem~\ref{thm:IDFM:CLs}
that are associated with
\begin{gather*}
\Omega=\frac{\ri^3}{\ri^3+1},\quad
\Omega=\frac1{\ri^3+1},\quad
\Omega=1,\\
\Phi={\rm e}^{(\ri^1-\ri^2)/2}(\ri^1+\ri^2-1),\quad
\Phi=\frac18{\rm e}^{(\ri^1-\ri^2)/2}\left((\ri^1+\ri^2)^2-4\ri^2\right)
\end{gather*}
correspond to the conservation of masses of the both individual phases and of mixture mass
as well as the conservation of mixture momentum and of energy in the drift flux model, respectively,
cf.~\cite[Chapter~13]{IshiiHibiki2006}.
The related equations in conserved form are
\begin{gather*}
\rho^1_t+(\rho^1u)_x=0,\quad
\rho^2_t+(\rho^2u)_x=0,\quad
(\rho^1+\rho^2)_t+\left((\rho^1+\rho^2)u\right)_x=0,\\
\left((\rho^1+\rho^2)u\right)_t+\left((\rho^1+\rho^2)(u^2+1)\right)_x=0,\\
\left((\rho^1+\rho^2)\left(\frac{u^2}2+\ln(\rho^1+\rho^2)\right)\right)_t+
\left((\rho^1+\rho^2)\left(\frac{u^2}2+\ln(\rho^1+\rho^2)+1\right)u\right)_x=0.
\end{gather*}
In particular, the magnitude $\ln(\rho^1+\rho^2)$ can be interpreted as (proportional to) the internal mixture energy.
The first, second and fourth equations constitute the conserved form of the system~$\mathcal S$
in the original variables $(\rho^1,\rho^2,u)$.
\end{remark}

\begin{theorem}\label{thm:IDFM:CLChars}
In the notation of Theorem~\ref{thm:IDFM:CLs},
the associated reduced conservation-law characteristics of the system~\eqref{eq:IDFMDiagonalizedSystem}
are respectively
\begin{enumerate}
\item
$\displaystyle
{\rm e}^{\ri^1-\ri^2}\left(\,
 \Omega-\sum_{\kappa=1}^\infty\sum_{\kappa'=0}^{\kappa-1}\omega^{\kappa-\kappa'}(-\hat{\mathscr A})^{\kappa'}\Omega_{\omega^\kappa},\
-\Omega+\sum_{\kappa=1}^\infty\sum_{\kappa'=0}^{\kappa-1}\omega^{\kappa-\kappa'}(-\hat{\mathscr A})^{\kappa'}\Omega_{\omega^\kappa},\
\sum_{\kappa=0}^\infty(-\hat{\mathscr A})^\kappa\Omega_{\omega^\kappa}\right)$.
\item
${\rm e}^{(\ri^1-\ri^2)/2}\big(\,2\Phi_{\ri^1\ri^1}+2\Phi_{\ri^1}+\frac12\Phi,\ \Phi_{\ri^2}-\Phi_{\ri^1}-\Phi,\ 0\,\big)$.
\item
${\rm e}^{(\ri^1-\ri^2)/2}\big(\,-\tilde{\mathscr D}_y\tilde{\mathfrak Q}\tilde q,\, \tilde{\mathfrak Q}\tilde q,\,0\,\big)$.%
\footnote{%
Here we omitted the multiplier $-2$, which is needed for the direct correspondence
between these conservation-law characteristics and conserved currents from the third family of Theorem~\ref{thm:IDFM:CLs}.
}
\end{enumerate}
The space spanned by these characteristics is naturally isomorphic
to the quotient space of con\-servation-law characteristics of the system~\eqref{eq:IDFMDiagonalizedSystem}.
\end{theorem}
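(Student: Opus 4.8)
The plan is to read off the conservation-law characteristics of $\mathcal S$ directly from the conserved currents of Theorem~\ref{thm:IDFM:CLs}. The system~\eqref{eq:IDFMDiagonalizedSystem} is a normal (Cauchy--Kovalevskaya) system, so its local conservation laws and conservation-law characteristics are in a natural one-to-one correspondence up to the respective equivalences, and the reduced characteristic $(\lambda^1,\lambda^2,\lambda^3)$ associated with a reduced conserved current $(\rho[\ri],\sigma[\ri])$ is obtained from the density by the operators of variational derivative, $\lambda^i=\mathsf E_{\ri^i}(\rho):=\sum_{\kappa\geqslant0}(-\mathscr D_x)^\kappa\p_{\ri^i_\kappa}\rho$, $i=1,2,3$. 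This last formula follows by writing $\mathrm D_t\rho+\mathrm D_x\sigma$ in the characteristic form $\sum_i\lambda^i(\ri^i_t+V^i\ri^i_x)+\mathrm D_x(\,\cdot\,)$, using that $\rho$ and $\sigma$ are free of $t$-derivatives so that $\mathrm D_x$ acts on them as $\mathscr D_x$, and integrating by parts in $x$; see \cite[Sect.~4.3, Lemma~4.28]{Olver1993}, \cite{MartinezAlonso1979} and the footnote in Section~\ref{sec:IDFMPreliminaries}. Granting this, the isomorphism statement is immediate: the three families of conserved currents span the space of conservation laws (Theorem~\ref{thm:IDFM:CLs}), the induced map $\rho\mapsto(\mathsf E_{\ri^i}(\rho))_i$ descends to an injection of this space into the space of characteristics (if $\mathsf E(\rho)=0$ for a conserved current $(\rho,\sigma)$, then $\rho$ is a total $x$-divergence up to a function of $t$, so $(\rho,\sigma)$ is trivial), and every conservation-law characteristic arises this way by definition.

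For the second family the density ${\rm e}^{(\ri^1-\ri^2)/2}(2\Phi_{\ri^1}+\Phi)$ is a function of $\ri^1,\ri^2$ only, so $\mathsf E_{\ri^i}$ collapses to $\p_{\ri^i}$ and $\lambda^3=0$; differentiating and simplifying with $\Phi_{\ri^1\ri^2}=-\Phi/4$ yields the stated $\lambda^1,\lambda^2$ in one line. For the third family I would avoid differentiating the higher-order densities and instead transport the result from the Klein--Gordon equation. Those currents were produced by pulling back the currents ${\rm C}_{\mathfrak Q}$ of~\eqref{eq:IDFMSupersystemA} along the restriction of~\eqref{eq:IDFMTransReducingToKGEq} to the $(t,x,\ri^1,\ri^2)$-space composed with the projection forgetting $s$; conservation-law characteristics transform covariantly under point transformations with an explicit differential-operator factor (cf.\ \cite{PopovychKunzingerIvanova2008}) and pull back along such a projection with vanishing component in the omitted variable. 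Since the characteristic of ${\rm C}_{\mathfrak Q}$, as a conservation law of~\eqref{eq:IDFMSupersystemA}, is the variational-symmetry characteristic $\mathfrak Qq$ (Noether's theorem), carrying $\mathfrak Qq$ through the inverse transformation~\eqref{eq:IDFMInverseToTransReducingToKGEq} and collecting the elementary factors produces ${\rm e}^{(\ri^1-\ri^2)/2}\big(-\tilde{\mathscr D}_y\tilde{\mathfrak Q}\tilde q,\ \tilde{\mathfrak Q}\tilde q,\ 0\big)$ up to the overall constant $-2$ that is absorbed into the currents in Theorem~\ref{thm:IDFM:CLs}, which explains the footnote there. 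As a consistency check, each of the three computed triples is a cosymmetry, i.e.\ it lies in the corresponding family of Theorem~\ref{thm:IDFM:Cosyms} after the appropriate reparameterization.

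The laborious step is the first family, whose density ${\rm e}^{\ri^1-\ri^2}\Omega$ depends on all three Riemann invariants: on $\ri^1,\ri^2$ through the prefactor and, together with $\ri^3$, through the arguments $\omega^\kappa=\mathscr A^\kappa\ri^3$ of $\Omega$. I would compute $\lambda^3$ first. Because $\mathscr A={\rm e}^{\ri^2-\ri^1}\mathscr D_x$ is free of $\ri^3$, the Fréchet derivative of $\omega^\kappa$ with respect to $\ri^3$ is the operator $\mathscr A^\kappa$ itself, so the chain rule for the variational derivative gives $\lambda^3=\mathsf E_{\ri^3}(\rho)=\sum_\kappa(\mathscr A^*)^\kappa\big({\rm e}^{\ri^1-\ri^2}\Omega_{\omega^\kappa}\big)$; combining $\mathscr A^*=-\mathscr D_x\circ{\rm e}^{\ri^2-\ri^1}$ with the identity $\mathscr D_xg={\rm e}^{\ri^1-\ri^2}\hat{\mathscr A}g$ valid for any $g=g(\omega^0,\dots,\omega^n)$ one obtains $(\mathscr A^*)^\kappa({\rm e}^{\ri^1-\ri^2}g)={\rm e}^{\ri^1-\ri^2}(-\hat{\mathscr A})^\kappa g$, whence $\lambda^3={\rm e}^{\ri^1-\ri^2}\sum_\kappa(-\hat{\mathscr A})^\kappa\Omega_{\omega^\kappa}$. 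For $\lambda^1$ and $\lambda^2$ the prefactor contributes the zeroth-order terms $\pm{\rm e}^{\ri^1-\ri^2}\Omega$, while the $\omega$-dependence contributes through the Fréchet derivatives of $\omega^\kappa$ with respect to $\ri^j$; since only the factor ${\rm e}^{\ri^2-\ri^1}$ inside $\mathscr A$ carries $\ri^j$, these satisfy the recursion $D_{\ri^j}\omega^{\kappa+1}=\mp M_{\omega^{\kappa+1}}+\mathscr A\circ D_{\ri^j}\omega^\kappa$ with $D_{\ri^j}\omega^0=0$, where $M_g$ denotes multiplication by $g$, and hence $D_{\ri^j}\omega^\kappa=\mp\sum_{i=1}^{\kappa}\mathscr A^{\kappa-i}\circ M_{\omega^i}$. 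Taking adjoints, applying the exponential identity above, and reindexing the resulting double sum produce precisely the terms $\mp\sum_{\kappa=1}^\infty\sum_{\kappa'=0}^{\kappa-1}\omega^{\kappa-\kappa'}(-\hat{\mathscr A})^{\kappa'}\Omega_{\omega^\kappa}$ of the theorem. The careful bookkeeping of these iterated integrations by parts --- and checking that it reproduces the fact that $\Omega$'s differing by an element of $\mathop{\rm im}\hat{\mathscr A}$ give the same (possibly zero) characteristic, in accordance with Remark~\ref{rem:IDFM:ImageOfA} --- is where the main effort of the proof lies.
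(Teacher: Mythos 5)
Your proposal is correct and, for the second and third families, follows essentially the same route as the paper: the second family is handled by applying the Euler operator to a density depending on $(\ri^1,\ri^2)$ only, and the third by transporting the Noether characteristic $(\mathfrak Q-\mathfrak Q^\dag)q$ of ${\rm C}_{\mathfrak Q}$ through the point transformation~\eqref{eq:IDFMInverseToTransReducingToKGEq} (the paper makes the operator factor explicit via the matrix $\mathfrak M$ and its formal adjoint, yielding $\tfrac12{\rm e}^{y+z}(-\mathscr D_y\lambda,\lambda,0)$; note the characteristic of ${\rm C}_{\mathfrak Q}$ is $2\mathfrak Qq$ rather than $\mathfrak Qq$, a harmless normalization absorbed by the footnote). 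The only genuine divergence is the first family: the paper explicitly sets aside the Euler-operator route there as not giving a ``reasonable representation'' and instead expands $\mathrm D_t\rho+\mathrm D_x\sigma$ into the form $\sum_i\lambda^i\mathcal E^i+\mathrm D_xH$ using the identity~\eqref{eq:IDFM:Bomegaiota} for $\mathrm B\omega^\kappa$ together with a Lagrange identity for $\mathrm A$, whereas you compute $\mathsf E_{\ri^i}(\rho)$ directly by the chain rule through the modified coordinates, using the Fr\'echet derivatives $D_{\ri^3}\omega^\kappa=\mathscr A^\kappa$ and $D_{\ri^j}\omega^\kappa=\mp\sum_{i=1}^{\kappa}\mathscr A^{\kappa-i}\circ M_{\omega^i}$ and the adjoint identity $(\mathscr A^*)^\kappa({\rm e}^{\ri^1-\ri^2}g)={\rm e}^{\ri^1-\ri^2}(-\hat{\mathscr A})^\kappa g$. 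These are two organizations of the same integration-by-parts bookkeeping (your adjoint identity is exactly the paper's Lagrange identity, and your recursion for $D_{\ri^j}\omega^\kappa$ encodes the same information as~\eqref{eq:IDFM:Bomegaiota}), and your reindexing reproduces the stated double sums correctly; what your version buys is the observation that the Euler-operator formula does work for the first family once it is routed through the $\omega$-coordinates, while the paper's version keeps the computation at the level of the divergence and so never has to justify the chain rule for variational derivatives in nonstandard jet coordinates. Your closing remarks on injectivity of the density-to-characteristic map, which the paper leaves implicit, are a welcome addition rather than a defect.
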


\begin{proof}
Since the system~\eqref{eq:IDFMDiagonalizedSystem} is a system of evolution equations,
its conservation-law characteristics can be found from reduced densities of the associated conservation laws
by acting the Euler operator,
\[
\mathsf E=\left(\sum_{\kappa=0}^\infty(-\mathrm D_x)^\kappa\p_{\ri^i_\kappa},\ i=1,2,3\right),
\]
see e.g. \cite[Proposition 7.41]{Tsujishita1982}.
This perfectly works for characteristics related to the second family of conserved currents
presented in Theorem~\ref{thm:IDFM:CLs}
but does not give reasonable representations for characteristics related to the first and third families,
for which we use different methods.

Characteristics related to the third family can be obtained from conservation-law characteristics
of the (1+1)-dimensional Klein--Gordon equation~\eqref{eq:IDFMSupersystemA}.
A characteristic of the conservation law of~\eqref{eq:IDFMSupersystemA} containing the conserved current~${\rm C}_{\mathfrak Q}$
 is $\lambda=(\mathfrak Q-\mathfrak Q^\dag)q=2\mathfrak Qq$ for $(\mathfrak Qq)\p_q\in\tilde\Lambda^{\rm q}_-$.
It is trivially prolonged to the conservation-law characteristic $(\lambda,0)$
of the system~\eqref{eq:IDFMSupersystemA}, \eqref{eq:IDFMSupersystemC}.
Denote by~$R^1$, $R^2$, $L^1$ and~$L^2$ the differential functions
associated with the equations~\eqref{eq:IDFMDEq1}, \eqref{eq:IDFMDEq2}, \eqref{eq:IDFMSupersystemA} and~\eqref{eq:IDFMSupersystemC}, respectively,
\begin{gather*}
R^1:=\ri^1_t+(\ri^1+\ri^2+1)\ri^1_x,\quad
R^2:=\ri^2_t+(\ri^1+\ri^2-1)\ri^2_x,\\
L^1:=q_{yz}-q,\quad
L^2:=p-\frac12{\rm e}^{-y-z}(q_z-q).
\end{gather*}
These differential functions are related via the transformation~$\mathcal T$,
$\hat{\mathcal T}^*(R^1,R^2)^{\mathsf T}=\mathfrak M(L^1,L^2)^{\mathsf T}$ with
\[
\mathfrak M=\begin{pmatrix}0&-\dfrac4\Delta\\ \dfrac2\Delta {\rm e}^{-y-z}&\dfrac4\Delta(\mathrm D_y+1)\end{pmatrix},
\quad\mbox{and}\quad
\mathfrak M^\dag=\begin{pmatrix}0&\dfrac2\Delta {\rm e}^{-y-z}\\ -\dfrac4\Delta&-(\mathrm D_y-1)\circ\dfrac4\Delta\end{pmatrix},
\]
\noprint{
\[
\hat{\mathcal T}^*R^1=-\frac4\Delta L^2,\quad
\hat{\mathcal T}^*R^2=\frac2\Delta {\rm e}^{-y-z} L^1+\frac4\Delta(\mathrm D_y+1)L^2,
\]
}
where $\Delta=(\mathrm D_y\hat{\mathcal T}^t)(\mathrm D_z\hat{\mathcal T}^x)-(\mathrm D_z\hat{\mathcal T}^t)(\mathrm D_y\hat{\mathcal T}^x)$,
$\mathcal T^*\Delta=-4(\ri^1_t\ri^2_x-\ri^1_x\ri^2_t)$.
The conservation-law characteristic $(\lambda^1,\lambda^2)$ of the system~\eqref{eq:IDFMDEq1}, \eqref{eq:IDFMDEq2}
that is associated with the conservation-law characteristic $(\lambda,0)$
of the system~\eqref{eq:IDFMSupersystemA}, \eqref{eq:IDFMSupersystemC}
is defined by $\mathfrak M^\dag(\Delta\hat{\mathcal T}^*\lambda^1,\Delta\hat{\mathcal T}^*\lambda^2)^{\mathsf T}=(\lambda,0)^{\mathsf T}$.
Therefore, the conservation-law characteristic $\lambda$ of~\eqref{eq:IDFMSupersystemA}
is mapped to the conservation-law characteristic $\frac12{\rm e}^{y+z}(-\mathscr D_y\lambda,\lambda,0)$ of the system~$\mathcal S$,
where all values should be expressed in terms of the variables $(t,x,\ri)$.
This gives a conservation-law characteristic from the third family of the theorem.

Characteristics related to the first family are found
following the procedure of defining them via the formal integration by parts, cf.\ \cite[p.~266]{Olver1993}.
We denote by~$\mathrm A$ and~$\mathrm B$ the counterparts of the operators~$\mathscr A$ and~$\mathscr B$, respectively,
in the complete total derivative operators with respect to~$t$ and~$x$,
$\mathrm A:={\rm e}^{\ri^2-\ri^1}\mathrm D_x$, $\mathrm B:=\mathrm D_t+(\ri^1+\ri^2)\mathrm D_x$.
Then
\begin{gather}\label{eq:IDFM:DivOf1stCC}
\mathrm D_t\big({\rm e}^{\ri^1-\ri^2}\Omega\big)+\mathrm D_x\big((\ri^1+\ri^2){\rm e}^{\ri^1-\ri^2}\Omega\big)
={\rm e}^{\ri^1-\ri^2}\Omega\mathcal E^1-{\rm e}^{\ri^1-\ri^2}\Omega\mathcal E^2+\sum_{\kappa=0}^\infty {\rm e}^{\ri^1-\ri^2}\Omega_{\omega^\kappa}\mathrm B\omega^\kappa.
\end{gather}
Here $\mathcal E^k$ denotes the left-hand side of the $k$th equation of the system~\eqref{eq:IDFMDiagonalizedSystem},
$\mathcal E^k=\ri^k_t+V^k\ri^k_x$, $k=1,2,3$. Note that $\mathcal E^3=\mathrm B\ri^3$.
Since $\Omega$ depends on a finite number of~$\omega$'s, there is no issue with convergence.

We derive using the mathematical induction with respect to~$\iota$ that
\begin{gather}\label{eq:IDFM:Bomegaiota}
\mathrm B\omega^\kappa=
\mathrm A^\kappa\mathcal E^3+\sum_{\kappa'=0}^{\kappa-1}\mathrm A^{\kappa'}\big(\omega^{\kappa-\kappa'}(\mathcal E^2-\mathcal E^1)\big).
\end{gather}
Indeed, for the base case $\kappa=0$, we have $\mathrm B\omega^0=\mathrm B\ri^3=\mathcal E^3$.
The induction step follows from the equality
$\mathrm B\omega^{\kappa+1}=\mathrm B\mathrm A\omega^\kappa=\mathrm A\mathrm B\omega^\kappa+\omega^{\kappa+1}(\mathcal E^2-\mathcal E^1)$.

Using again the mathematical induction with respect to~$\kappa$,
we prove the counterpart of the Lagrange identity in terms of the operator~$\mathrm A$,
\[
{\rm e}^{\ri^1-\ri^2}F\mathrm A^\kappa G={\rm e}^{\ri^1-\ri^2}\big((-\mathrm A)^\kappa F\big)G
+\mathrm D_x\sum_{\kappa'=0}^{\kappa-1}\big((-\mathrm A)^{\kappa'}F\big)\mathrm A^{\kappa-\kappa'-1}G,\quad \kappa\in\mathbb N_0,
\]
for any differential functions~$F$ and~$G$ of~$\ri$.
We apply this identity to each summand of the expression ${\rm e}^{\ri^1-\ri^2}\Omega_{\omega^\kappa}\mathrm B\omega^\kappa$
expanded in view of~\eqref{eq:IDFM:Bomegaiota}, which gives
\[
{\rm e}^{\ri^1-\ri^2}\Omega_{\omega^\kappa}\mathrm B\omega^\kappa
={\rm e}^{\ri^1-\ri^2}\big((-\mathrm A)^\kappa\Omega_{\omega^\kappa}\big)\mathcal E^3
+{\rm e}^{\ri^1-\ri^2}\sum_{\kappa'=0}^{\kappa-1}\big((-\mathrm A)^{\kappa'}\Omega_{\omega^\kappa}\big)\omega^{\kappa-\kappa'}(\mathcal E^2-\mathcal E^1)+\mathrm D_xH,
\]
where $H$ is a differential function of~$\ri$ that vanishes on the manifold~$\mathcal S^{(\infty)}$ and whose precise form is not essential.
When acting on functions of~$\omega$'s, the operator~$\mathrm A$ can be replaced
by the operator $\hat{\mathscr A}=\sum_{\kappa=0}^{\infty}\omega^{\kappa+1}\p_{\omega^\kappa}$.
Substituting the derived expression for ${\rm e}^{\ri^1-\ri^2}\Omega_{\omega^\kappa}\mathrm B\omega^\kappa$ into~\eqref{eq:IDFM:DivOf1stCC}
and collecting coefficients of~$\mathcal E^1$, $\mathcal E^2$ and~$\mathcal E^3$,
we obtain a characteristic from the first family of the theorem.
\looseness=-1
\end{proof}

\begin{remark}\label{rem:IDFM:IntersectionOfCLSubspaces}\looseness=-1
Since the common element ${\rm e}^{\ri^1-\ri^2}(1,-1,0)$ of cosymmetry families,
which is mentioned in Remark~\ref{rem:IDFM:IntersectionOfCosymSubspaces},
is a conservation-law characteristic of the system~$\mathcal S$, it was expected that
the families of conserved currents and of conservation-law characteristics
from Theorems~\ref{thm:IDFM:CLs} and~\ref{thm:IDFM:CLChars} have the same properties
as the properties of cosymmetry families indicated in Remark~\ref{rem:IDFM:IntersectionOfCosymSubspaces}.
Thus,
the above conservation-law characteristic,
which spans the intersection of the first and the second families from Theorem~\ref{thm:IDFM:CLChars},
corresponds to the conserved current ${\rm e}^{\ri^1-\ri^2}(1,\ri^1+\ri^2)$
spanning the intersection of the respective families from Theorem~\ref{thm:IDFM:CLs},
cf.\ the end of the proof of this theorem.
\end{remark}

\begin{remark}\label{rem:IDFM:CorrespondenceBetweenCosymsAndCLChars}
The second family of cosymmetries from Theorem~\ref{thm:IDFM:Cosyms} coincides with
the second family of conservation-law characteristics from Theorem~\ref{thm:IDFM:CLChars} up to reparameterization.
In other words, each cosymmetry in this family is a conservation-law characteristic.
This is not the case for the first%
\footnote{%
In the notation of Remark~\ref{rem:IDFM:ImageOfA},
upon formally interpreting $\omega^0$ as a single dependent variable of a single independent variable, say $\varsigma$,
and $\omega^1$, $\omega^2$, \dots\ as the successive derivatives of $\omega^0$,
the operators $\p_\varsigma+\hat{\mathscr A}$ and $\mathsf E'$
become the total derivative operator with respect to~$\varsigma$
and the Euler operator with respect to~$\omega^0$, respectively.
Suppose that a smooth function~$\Omega$ of a finite number of~$\omega$'s
belongs to $\mathop{\rm im}\mathsf E$.
Then $(\hat{\mathscr A}\Omega)/\omega^1\in\mathop{\rm im}\mathsf E'$ and
thus the Fr\'echet derivative of $(\hat{\mathscr A}\Omega)/\omega^1$ with respect to $\omega^0$
is a formally self-adjoint operator.
This is not the case for any $\Omega$ of even positive order.
Therefore, any cosymmetry from the first family of Theorem~\ref{thm:IDFM:Cosyms}
with $\Omega$ of even positive order is not a conservation-law characteristic of the system~\eqref{eq:IDFMDiagonalizedSystem}.}
and third families of cosymmetries from Theorem~\ref{thm:IDFM:Cosyms},
which properly contain the first and third families of conservation-law characteristics from Theorem~\ref{thm:IDFM:CLChars}, respectively.
\end{remark}

\begin{theorem}\label{thm:IDFMGeneratingSetOfCLs}
Under the action of generalized symmetries of the system~\eqref{eq:IDFMDiagonalizedSystem}
on its space of conservation laws,
a generating set of conservation laws of this system
is constituted by the two zeroth-order conservation laws
respectively containing the conserved currents
\begin{subequations}\label{eq:IDFM:GeneratingCCs}
\begin{gather}\label{eq:IDFM:GeneratingCC1}
{\rm e}^{\ri^1-\ri^2}\big(\,\ri^3,\ (\ri^1+\ri^2)\ri^3\,\big),
\\\label{eq:IDFM:GeneratingCC2}
{\rm e}^{\ri^1-\ri^2}\big(x-V^3t,V^3(x-V^3t)-t\big)\quad\mbox{with}\quad V^3:=\ri^1+\ri^2.
\end{gather}
\end{subequations}
\end{theorem}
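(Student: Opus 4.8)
The plan is to combine the action of generalized symmetries on conservation laws with the explicit descriptions in Theorems~\ref{thm:IDFM:CLs}, \ref{thm:IDFM:CLChars} and~\ref{thm:IDFMGenSyms}. Recall that for a generalized symmetry with characteristic~$\eta$ the Lie derivative along the associated evolutionary vector field sends conserved currents to conserved currents and hence induces a well-defined action on the quotient space of conservation laws; on characteristics this action reads $\lambda\mapsto\mathrm{pr}\,\mathbf v_\eta(\lambda)+\mathcal D^\dag\lambda$ modulo trivial cosymmetries, where~$\mathcal D$ is the operator defined by $\mathrm{pr}\,\mathbf v_\eta$ applied to the left-hand sides of the system being $\mathcal D$ applied to those left-hand sides. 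By a generating set we mean a set whose only invariant enlargement to a subspace of conservation laws is the whole space, so by Theorem~\ref{thm:IDFM:CLs} it suffices to put the three families listed there into any invariant subspace containing~\eqref{eq:IDFM:GeneratingCC1} and~\eqref{eq:IDFM:GeneratingCC2}. First I would record that both currents in~\eqref{eq:IDFM:GeneratingCCs} are of order zero, and that up to sign and equivalence of conserved currents \eqref{eq:IDFM:GeneratingCC1} is the conservation law of the first family with $\Omega=\omega^0=\ri^3$, while~\eqref{eq:IDFM:GeneratingCC2} is the conservation law of the third family with $\tilde{\mathfrak Q}=\tilde{\mathscr D}_z$; the latter is easiest to check by comparing characteristics via Theorem~\ref{thm:IDFM:CLChars}, which exhibits~\eqref{eq:IDFM:GeneratingCC2} as the counterpart of the conservation law ${\rm C}_{\mathrm D_z}$ of the Klein--Gordon equation~\eqref{eq:IDFMSupersystemA}.

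The first family is immediate. Since the infinite prolongation of~$\check{\mathcal W}(\Omega)$ equals $\sum_{\iota=0}^\infty(\hat{\mathscr A}^\iota\Omega)\p_{\omega^\iota}$ and has zero $\ri^1$- and $\ri^2$-components, applying it to~\eqref{eq:IDFM:GeneratingCC1} produces $\big({\rm e}^{\ri^1-\ri^2}\Omega,\,(\ri^1+\ri^2){\rm e}^{\ri^1-\ri^2}\Omega\big)$, the general conserved current of the first family. Hence the orbit of~\eqref{eq:IDFM:GeneratingCC1} already spans the first family, in particular the mixture-mass conservation law ($\Omega=1$).

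For the second and third families I would pass to the Klein--Gordon equation~\eqref{eq:IDFMSupersystemA}, along which, by the proof of Theorem~\ref{thm:IDFM:CLs}, these families are the counterparts of the conservation laws ${\rm C}_{\mathfrak Q}=(-q\mathrm D_z\mathfrak Qq,\,q_y\mathfrak Qq)$ with characteristic $2\mathfrak Qq$, $\mathfrak Q$ running over the basis of the space~$\tilde\Lambda^{\rm q}_-$ of formally skew-adjoint operators generated by $\mathrm D_y$, $\mathrm D_z$, $\mathrm J$, together with the conservation laws $\bar{\rm C}^0_f$ with characteristic $f\in{\rm KG}$. Under the isomorphism of Theorem~\ref{thm:IDFMGenSyms}, $\check{\mathcal R}(\Gamma)$ and $\check{\mathcal D}$ act on these as the symmetries $\mathfrak Rq\,\p_q$ of~\eqref{eq:IDFMSupersystemA} with $\mathfrak R$ among the admissible local operators described after Lemma~\ref{lem:IDFM:GenSymsOfSupersystem2B} (which include, modulo~\eqref{eq:IDFMSupersystemA}, all polynomials in $\mathscr D_y$, all polynomials in $\mathscr D_z$, and the operators $(\mathscr D_y+1)\mathscr D_y^\iota\mathscr J^\kappa$, $(\mathscr D_y+1)\mathscr D_z^\iota\mathscr J^\kappa$, $(\mathscr D_y+1)\mathscr J^\kappa$), whereas $\check{\mathcal P}(\Phi)$ acts as $f(y,z)\,\p_q$, $f\in{\rm KG}$. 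Since such $\mathfrak R$ and such $f$ commute with $\mathrm D_y\mathrm D_z$, the action formula gives that $\mathfrak Rq\,\p_q$ sends ${\rm C}_{\mathfrak Q}$ to the conservation law with characteristic $2(\mathfrak Q\mathfrak R+\mathfrak R^\dag\mathfrak Q)q$ and $\bar{\rm C}^0_f$ to $\bar{\rm C}^0_{\mathfrak R^\dag f}$, while $f(y,z)\,\p_q$ sends ${\rm C}_{\mathfrak Q}$ to $\bar{\rm C}^0_{2\mathfrak Qf}$ and annihilates $\bar{\rm C}^0_f$; note $\mathfrak Q\mathfrak R+\mathfrak R^\dag\mathfrak Q$ is again skew-adjoint and $\mathfrak R^\dag$, $\mathfrak Q$ preserve~${\rm KG}$. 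The crux is then to show that, starting from $\mathfrak Q_0=\mathrm D_z$, the operators $\mathfrak Q\mathfrak R+\mathfrak R^\dag\mathfrak Q$, with $\mathfrak Q$ ranging over the operators already produced and $\mathfrak R$ over the admissible ones, span $\tilde\Lambda^{\rm q}_-$ modulo~\eqref{eq:IDFMSupersystemA}. I would prove this by induction on the order~$n$ (necessarily odd) of the target skew-adjoint operator: for each possible highest-order monomial ($\mathrm D_y^n$, $\mathrm D_z^n$, $\mathrm J^n$, or $\mathrm J^a\mathrm D_y^b$, $\mathrm J^a\mathrm D_z^b$) one selects an admissible~$\mathfrak R$ and an already-obtained~${\rm C}_{\mathfrak Q}$ — for instance $\mathfrak R=\mathrm D_y^{\,n+1}$ or $\mathrm D_z^{\,n-1}$ acting on ${\rm C}_{\mathrm D_z}$ for the pure-$\mathscr D$ monomials, and operators $(\mathrm D_y+1)\mathrm D_y^\iota\mathrm J^\kappa$ acting on ${\rm C}_{\mathrm D_z}$ and on the previously produced ${\rm C}_{\mathrm J}$ (obtained by acting $(\mathrm D_y+1)\mathrm J$ on ${\rm C}_{\mathrm D_z}$) for the monomials involving~$\mathrm J$ — so that $\mathfrak Q\mathfrak R+\mathfrak R^\dag\mathfrak Q$ reproduces the required highest-order part, the remaining lower-order and already-generated contributions being absorbed by the induction hypothesis. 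Once the third family is available, applying $\check{\mathcal P}(\Phi)$ to the conservation law with $\mathfrak Q=\mathrm D_y$ yields $\bar{\rm C}^0_{2f_y}$, and since every $g\in{\rm KG}$ has the form $g=f_y$ with $f(y,z)=\int_0^y g(\eta,z)\,\mathrm d\eta+g_z(0,z)\in{\rm KG}$, the whole second family follows.

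Collecting the three parts, the invariant subspace generated by~\eqref{eq:IDFM:GeneratingCC1} and~\eqref{eq:IDFM:GeneratingCC2} contains all three families of Theorem~\ref{thm:IDFM:CLs}, hence equals the entire space of conservation laws of~\eqref{eq:IDFMDiagonalizedSystem}. I expect the inductive step for the third family to be the main obstacle: one must carefully track the interplay between the noncommutativity of $\mathrm D_y$, $\mathrm D_z$, $\mathrm J$ and the relation $\mathrm D_y\mathrm D_z=1$ on~${\rm KG}$, and verify that the admissible operators~$\mathfrak R$ — which carry the compulsory left factor $\mathscr D_y+1$ and exclude the bare powers $\mathscr J^\kappa$ — still reach every highest-order monomial occurring in $\tilde\Lambda^{\rm q}_-$ (the monomial~$\mathrm J^n$ in particular forcing one to act with $(\mathrm D_y+1)\mathrm J^{\,n-1}$ on ${\rm C}_{\mathrm J}$ rather than on ${\rm C}_{\mathrm D_z}$). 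Finally I would remark that neither conservation law is redundant: the action maps conservation laws of the second and third families to conservation laws of the same two families (their characteristics keep the vanishing $\ri^3$-component, and $\check{\mathcal W}(\Omega)$ annihilates them), so~\eqref{eq:IDFM:GeneratingCC1} cannot arise from~\eqref{eq:IDFM:GeneratingCC2}, while~\eqref{eq:IDFM:GeneratingCC2} does not lie in the first family.
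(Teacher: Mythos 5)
Your overall framework is the right one and matches the paper's: the action on characteristics, the identification of~\eqref{eq:IDFM:GeneratingCC1} with the first family at $\Omega=\omega^0$ and of~\eqref{eq:IDFM:GeneratingCC2} with the counterpart of the Klein--Gordon conserved current $(q_z^2,-q^2)\sim{\rm C}_{\mathrm D_z}$, and the generation of the first family by acting $\check{\mathcal W}(\Omega)$ on~\eqref{eq:IDFM:GeneratingCC1} are all exactly as in the paper. Your derivation of the second family from the third via $f\p_q$ also works, modulo a slip: your explicit antiderivative $f=\int_0^y g(\eta,z)\,\mathrm d\eta+g_z(0,z)$ does not satisfy $f_{yz}=f$; the correct choice realizing $g=f_y$ with $f\in{\rm KG}$ is simply $f=g_z$.

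The genuine gap is the third family, which is where the substance of the theorem lies. You reduce it to the claim that, starting from $\mathfrak Q_0=\mathrm D_z$ and iterating $\mathfrak Q\mapsto\mathfrak Q\mathfrak R+\mathfrak R^\dag\mathfrak Q$ over the admissible operators~$\mathfrak R$, one spans all of~$\tilde\Lambda^{\rm q}_-$, and you then explicitly leave the induction ("the main obstacle") unexecuted; the bookkeeping with $[\mathrm D_y,\mathrm J]=\mathrm D_y$, the relation $\mathrm D_y\mathrm D_z\equiv1$ on ${\rm KG}$, and the forbidden bare powers $\mathrm J^\kappa$ is precisely what makes this nontrivial, and as written nothing guarantees the induction closes. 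The paper takes a shorter route that avoids iteration altogether: it invokes \cite[Corollary~11]{OpanasenkoPopovych2018}, by which every conservation law of~\eqref{eq:IDFMSupersystemA} is obtained from $(q_z^2,-q^2)$ by a \emph{single} application of a generalized symmetry, namely $\frac12(\mathrm D_y\mathfrak Q q)\p_q$ produces the conservation law with characteristic $(\mathfrak Q-\mathfrak Q^\dag)q$. The only remaining issue is that not every such symmetry transfers to~$\mathcal S$, and the paper disposes of it with one algebraic identity: writing $\tilde\Lambda^{\rm q}=\mathfrak V\dotplus\mathfrak J$ with $\mathfrak J=\langle(\mathrm J^\kappa q)\p_q,\kappa\in\mathbb N\rangle$ the non-transferable part, the computation $(\mathrm D_y+1)(\mathrm J-1/2)^\kappa-(\mathrm J+1/2)^\kappa(\mathrm D_y-1)=(\mathrm J-1/2)^\kappa+(\mathrm J+1/2)^\kappa$ shows $\mathfrak J_-\subset\mathfrak V_-$, hence $\mathfrak V_-=\tilde\Lambda^{\rm q}_-$ and the transferable symmetries already reach every skew-adjoint target. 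To complete your proof you should either import that corollary and this decomposition, or actually carry out your induction; at present the central step is asserted, not proved. (The closing remark on non-redundancy is fine but not needed, since a generating set is not required to be minimal.)
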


\begin{proof}
The action of the generalized symmetry $\Omega\p_{\ri^3}$ on the conserved current~\eqref{eq:IDFM:GeneratingCC1}
gives the conserved current $\big(\,{\rm e}^{\ri^1-\ri^2}\Omega,\ (\ri^1+\ri^2){\rm e}^{\ri^1-\ri^2}\Omega\,\big)$.
Varying the parameter function~$\Omega$ through the space of smooth functions
of a finite, but unspecified number of \smash{$\omega^\kappa=({\rm e}^{\ri^2-\ri^1}\mathscr D_x)^\kappa\ri^3$}, $\kappa\in\mathbb N_0$,
we obtain the first family of conserved currents from Theorem~\ref{thm:IDFM:CLs}.

Conserved currents from the other two families are constructed by mapping
conserved currents of the (1+1)-dimensional Klein--Gordon equation~\eqref{eq:IDFMSupersystemA}
in the way described in the proof of Theorem~\ref{thm:IDFM:CLs}.
In view of \cite[Corollary~11]{OpanasenkoPopovych2018},
a generating set of conservation laws of~\eqref{eq:IDFMSupersystemA}
is constituted,
under the action of generalized symmetries of~\eqref{eq:IDFMSupersystemA}
on conservation laws thereof,
by the single conservation law containing the conserved current $(q_z^2,-q^2)$.
The counterpart of this conserved current for the system~\eqref{eq:IDFMDiagonalizedSystem}
is the conserved current
\[
{\rm e}^{\ri^1-\ri^2}\Big(\ri^2_x(x-V^2t)^2-\ri^1_x(x-V^1t)^2,\ V^2\ri^2_x(x-V^2t)^2-V^1\ri^1_x(x-V^1t)^2\Big),
\]
which is equivalent to the conserved current~\eqref{eq:IDFM:GeneratingCC2} multiplied by~2.
It follows from Lemma~\ref{lem:IDFM:GenSymsOfSupersystem2B}
that not all generalized symmetries of~\eqref{eq:IDFMSupersystemA}
can be naturally mapped to those of the system~\eqref{eq:IDFMDiagonalizedSystem}.
This is why we need to carefully analyze the result on generating conservation laws of~\eqref{eq:IDFMSupersystemA}
before adopting it for the system~\eqref{eq:IDFMDiagonalizedSystem}.

The conserved current \smash{${\rm C}^0_f=(fq_z,-f_y q)$} of the equation~\eqref{eq:IDFMSupersystemA}
can be obtained by acting the generalized symmetry $\frac12f_y\p_q\in\hat{\mathfrak K}^{-\infty}$ of this equation
on the chosen conserved current $(q_z^2,-q^2)$.
Here the parameter function $f=f(y,z)$ runs through the solution set of~\eqref{eq:IDFMSupersystemA}.
Each conserved current from the second family of Theorem~\ref{thm:IDFM:CLs}
is the image of a conserved current of the form \smash{${\rm C}^0_f$},
and each Lie symmetry vector field $f\p_q$ of~\eqref{eq:IDFMSupersystemA}
is mapped to an element of the ideal~$\mathcal I^1$ of the algebra~$\hat\Sigma^{\rm q}$.
Therefore, the second family of conserved currents from Theorem~\ref{thm:IDFM:CLs}
is generated by acting the elements of~$\mathcal I^1$ on the conserved current~\eqref{eq:IDFM:GeneratingCC2}.

The action of the generalized symmetry $\frac12(\mathrm D_y\mathfrak Qq)\p_q$,
where $(\mathfrak Qq)\p_q\in\tilde\Lambda^{\rm q}$,
on the conserved current $(q_z^2,-q^2)$ gives the conserved current
$(q_z\mathrm D_y\mathrm D_z\mathfrak Qq,-q\mathrm D_y\mathfrak Qq)$,
which is equivalent to the conserved currents $(q_z\mathfrak Qq,-q\mathrm D_y\mathfrak Qq)$
and, therefore, to ${\rm C}_{\mathfrak Q}=(-q\mathrm D_z\mathfrak Qq,\,q_y\mathfrak Qq)$.
The conservation law containing the obtained conserved currents has the characteristic $(\mathfrak Q-\mathfrak Q^\dag)q$.

We denote by~$\mathfrak V$ the subalgebra of~$\tilde\Lambda^{\rm q}$ constituted by
the elements of~$\tilde\Lambda^{\rm q}$
that have counterparts among generalized symmetries of the system~\eqref{eq:IDFMDiagonalizedSystem},
and $\mathfrak J:=\langle(\mathrm J^\kappa q)\p_q,\,\kappa\in\mathbb N\rangle$.
We also introduce the corresponding spaces~$\mathfrak V_-$ and~$\mathfrak J_-$ of linear generalized symmetries
associated with formally skew-adjoint counterparts $\frac12(\mathfrak Q-\mathfrak Q^\dag)$
of operators~$\mathfrak Q$ from~$\mathfrak V$ and~$\mathfrak J$, respectively.
Note that $\mathfrak V_-\supsetneq\mathfrak V\cap\tilde\Lambda^{\rm q}_-$ and
$\mathfrak J_-=\mathfrak J\cap\tilde\Lambda^{\rm q}_-$.
In view of Lemma~\ref{lem:IDFM:GenSymsOfSupersystem2B}, $(\mathfrak Qq)\p_q\in\mathfrak V$
if and only if the operator~$\mathfrak Q$ is represented in the form
$\mathfrak Q=(\mathrm D_y+1)\mathfrak Q_1+(\mathrm D_z+1)\mathfrak Q_2+c$
for some \smash{$\mathfrak Q_1\in\langle\,\mathrm D_y^\iota\mathrm J^\kappa,\,\kappa,\iota\in\mathbb N_0\rangle$},
some $\mathfrak Q_2\in\langle\,\mathrm D_z^\iota\mathrm J^\kappa,\,\kappa,\iota\in\mathbb N_0\rangle$
and some $c\in\mathbb R$.
Hence $\tilde\Lambda^{\rm q}$ is the direct sum of~$\mathfrak V$ and $\mathfrak J$ as vector spaces,
$\tilde\Lambda^{\rm q}=\mathfrak V\dotplus\mathfrak J$,
and thus $\tilde\Lambda^{\rm q}_-=\mathfrak V_-+\mathfrak J_-$,
where the sum is not direct by now.
We are going to show that $\mathfrak V_-\supset\mathfrak J_-$, which implies that $\tilde\Lambda^{\rm q}_-=\mathfrak V_-$.
Indeed, for any $\mathfrak Q:=(\mathrm D_y+1)(\mathrm J-1/2)^\kappa$ with $\kappa\in2\mathbb N_0+1$ we have
\[
\mathfrak Q-\mathfrak Q^\dag
=(\mathrm D_y+1)(\mathrm J-1/2)^\kappa-(\mathrm J+1/2)^\kappa(\mathrm D_y-1)
=(\mathrm J-1/2)^\kappa+(\mathrm J+1/2)^\kappa,
\]
i.e., $\big((\mathrm J-1/2)^\kappa q+(\mathrm J+1/2)^\kappa q\big)\p_q\in\mathfrak V_-$
since $(\mathfrak Q q)\p_q\in\mathfrak V$.
Therefore,
\[
\mathfrak J_-=\big\langle(\mathrm J^\kappa q)\p_q,\,\kappa\in2\mathbb N_0+1\big\rangle
=\big\langle\big((\mathrm J-1/2)^\kappa q+(\mathrm J+1/2)^\kappa q\big)\p_q,\,\kappa\in2\mathbb N_0+1\big\rangle\subset\mathfrak V_-.
\]

As a result, for any $(\mathfrak Qq)\p_q\in\tilde\Lambda^{\rm q}$
the conserved current ${\rm C}_{\mathfrak Q}$ is equivalent to a conserved current of~\eqref{eq:IDFMSupersystemA}
that is obtained by the action of a generalized symmetry from $\mathfrak V$ on the chosen conserved current $(q_z^2,-q^2)$.
For the system~\eqref{eq:IDFMDiagonalizedSystem}, this means that
the third family of conserved currents from Theorem~\ref{thm:IDFM:CLs}
is generated by acting the generalized symmetries of the form~$\check{\mathcal R}(\Gamma)$
on the conserved current~\eqref{eq:IDFM:GeneratingCC2}.
\end{proof}

\begin{remark}\label{rem:IDFM:2rdFamilyOfCLs}
The conserved currents from the second family of Theorem~\ref{thm:IDFM:CLs}
can be represented in a more symmetrical form.
Reparameterizing them in terms of the potential~$\bar\Phi$ defined via~$\Phi$
by the system $\bar\Phi_{\ri^1}+\frac12\bar\Phi=2\Phi_{\ri^1}$, $-\bar\Phi_{\ri^2}+\frac12\bar\Phi=\Phi$,
cf.\ Section~\ref{sec:IDFMSolutionsViaEssentialSubsystem},
we obtain another representation for these conserved currents,
\[
{\rm e}^{(\ri^1-\ri^2)/2}
\big(\,\bar\Phi_{\ri^1}-\bar\Phi_{\ri^2}+\bar\Phi,\
(\ri^1+\ri^2+1)\bar\Phi_{\ri^1}-(\ri^1+\ri^2-1)\bar\Phi_{\ri^2}+(\ri^1+\ri^2)\bar\Phi\,\big),
\]
where the parameter function~$\bar\Phi=\bar\Phi(\ri^1,\ri^2)$
runs through the solution space of the Klein--Gordon equation $\bar\Phi_{\ri^1\ri^2}=-\bar\Phi/4$ as well.
The successive point transformation $\tilde\Phi={\rm e}^{(\ri^1-\ri^2)/2}\bar\Phi$
reduces the above representation to
$\big(\,\tilde\Phi_{\ri^1}-\tilde\Phi_{\ri^2},\ (\ri^1+\ri^2+1)\tilde\Phi_{\ri^1}-(\ri^1+\ri^2-1)\tilde\Phi_{\ri^2}\,\big)$,
where the parameter function~$\tilde\Phi=\tilde\Phi(\ri^1,\ri^2)$
runs through the solution space of the equation \smash{$2\tilde\Phi_{\ri^1\ri^2}=\tilde\Phi_{\ri^2}-\tilde\Phi_{\ri^1}$}.
It is the last representation that was employed in \cite[Theorem~22]{OpanasenkoBihloPopovychSergyeyev2020}.
In terms of~$\tilde\Phi$, the associated characteristics take the form
$(\tilde\Phi_{\ri^1\ri^1}-\tilde\Phi_{\ri^1\ri^2},\tilde\Phi_{\ri^1\ri^2}-\tilde\Phi_{\ri^2\ri^2},0)$.
\end{remark}

\begin{remark}\label{rem:IDFM:3rdFamilyOfCLs}
The advantage of using conserved currents of the form~${\rm C}_{\mathfrak Q}$
for mapping to conserved currents of the system~$\mathcal S$
is that we obtain a uniform representation for elements of the third family of Theorem~\ref{thm:IDFM:CLs}.
At the same time, it is not obvious how to find equivalent conserved currents of minimal order for elements of this family
or how to single out conserved currents in this family that are equivalent to ones not depending on $(t,x)$ explicitly.
The former problem can be solved by replacing conserved currents of the form~${\rm C}_{\mathfrak Q}$
in the mapping by equivalent conserved currents
${\rm C}^1_{\kappa\iota}$, $\kappa\in\mathbb N_0$, $\iota\in\mathbb N$,
$\bar{\rm C}^1_{\kappa\iota}$, ${\rm C}^2_{\kappa\iota}$, $\bar{\rm C}^2_{\kappa\iota}$, $\kappa,\iota\in\mathbb N_0,$
presented in~\cite[Section~4]{OpanasenkoPopovych2018}
although an additional ``integration by parts'' may still be needed for lowest values of~$(\kappa,\iota)$ after the mapping,
cf. the proof of Theorem~\ref{thm:IDFMGeneratingSetOfCLs}.
For solving the latter problem, we use an analog of the trick used in the proof of Theorem~\ref{thm:IDFM:CLs}
for deriving the second family of conserved currents,
which leads to Theorem~\ref{thm:IDFM:TranslationInvCLs} below.
\end{remark}

\begin{corollary}\label{cor:IDFM0th-OrderCLs}
(i) The space of hydrodynamic conservation laws of the system~\eqref{eq:IDFMDiagonalizedSystem}
is infinite-dimensional and is naturally isomorphic to the space spanned by the conserved currents
from the second family of Theorem~\ref{thm:IDFM:CLs} and
from the first family with~$\Omega$ running through the space of smooth functions of~$\omega^0:=\ri^3$.

(ii) The space of zeroth-order conservation laws of the system~\eqref{eq:IDFMDiagonalizedSystem}
is naturally isomorphic to the space spanned by its hydrodynamic conserved currents
and the conserved current~\eqref{eq:IDFM:GeneratingCC2}.
\end{corollary}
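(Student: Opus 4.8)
Both statements will be read off from Theorems~\ref{thm:IDFM:CLs} and~\ref{thm:IDFM:CLChars} by deciding, for each of the three families of conserved currents, which members represent a hydrodynamic, respectively zeroth-order, conservation law. The bridge is the elementary observation that a conserved current of order~$0$ has a characteristic of order~$0$ (apply the Euler operator to the order-$0$ density), and that a \emph{hydrodynamic} conserved current has in addition a characteristic free of $t$ and~$x$. Hence it suffices to determine which conservation laws of~\eqref{eq:IDFMDiagonalizedSystem} have a characteristic of order~$0$ (and, for part~(i), also $(t,x)$-free), and then to exhibit an order-$0$ (respectively hydrodynamic) representative for each of them.

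\textbf{The three families of characteristics.} First I would inspect the families in Theorem~\ref{thm:IDFM:CLChars}. The second family consists of the characteristics ${\rm e}^{(\ri^1-\ri^2)/2}\big(2\Phi_{\ri^1\ri^1}+2\Phi_{\ri^1}+\tfrac12\Phi,\ \Phi_{\ri^2}-\Phi_{\ri^1}-\Phi,\ 0\big)$ with $\Phi=\Phi(\ri^1,\ri^2)$; these are automatically $(t,x)$-free and of order~$0$. A first-family characteristic is always $(t,x)$-free, and its order is governed by the order of its $\ri^3$-component $\mathsf E'\Omega=\sum_{\kappa\geqslant0}(-\hat{\mathscr A})^\kappa\Omega_{\omega^\kappa}$; since $\mathsf E'$ acts as the Euler operator in $\omega^0$ and $\ker\mathsf E'=\langle 1\rangle+\operatorname{im}\hat{\mathscr A}$ (Remark~\ref{rem:IDFM:ImageOfA}), this component is a function of $\omega^0$ alone precisely when $\Omega$ agrees, modulo $\operatorname{im}\hat{\mathscr A}$, with a function of $\omega^0=\ri^3$ — and in that case the other two components of the characteristic are likewise functions of $\ri^3$, so the whole characteristic has order~$0$. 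For the third family, using the formulas of Section~\ref{sec:IDFMSolutionsViaEssentialSubsystem} for the pullbacks of $q_y$, $q_z$ and $q_{zz}$ (the pullback of $q_z$ is of order~$0$, whereas that of $q_y$ contains $1/\ri^1_x$), together with the odd-degree restriction built into the third family, one finds that ${\rm e}^{(\ri^1-\ri^2)/2}\big(-\tilde{\mathscr D}_y\tilde{\mathfrak Q}\tilde q,\,\tilde{\mathfrak Q}\tilde q,\,0\big)$ has positive order for every admissible $\tilde{\mathfrak Q}$ except $\tilde{\mathfrak Q}=\tilde{\mathscr D}_z$, for which $\tilde{\mathscr D}_z\tilde q$ and $\tilde{\mathscr D}_y\tilde{\mathscr D}_z\tilde q=\tilde q$ are both of order~$0$; the resulting order-$0$ characteristic ${\rm e}^{\ri^1-\ri^2}\big(x-V^1t,\,-(x-V^2t),\,0\big)$ one recognizes (up to scaling, cf.\ the proof of Theorem~\ref{thm:IDFMGeneratingSetOfCLs}) as the characteristic of the conserved current~\eqref{eq:IDFM:GeneratingCC2}, and it does carry explicit $t,x$.

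\textbf{Assembling.} Next I would combine these observations using the independence of the three families (Remark~\ref{rem:IDFM:IntersectionOfCLSubspaces} and the end of the proof of Theorem~\ref{thm:IDFM:CLs}). The crucial point is that the first family is the only one whose characteristics depend on~$\ri^3$: hence the $\ri^3$-order of a sum of characteristics from the three families equals the $\ri^3$-order of its first-family summand, so no cancellation of positive orders can occur between families; within the third family no such cancellation occurs either, since the pullback map is injective on the basis of $\tilde\Lambda^{\rm q}_-$. Consequently a conservation law of~\eqref{eq:IDFMDiagonalizedSystem} has a characteristic of order~$0$ if and only if it lies in the span of the second family, of the first family with $\Omega=\Omega(\ri^3)$, and of the conservation law of~\eqref{eq:IDFM:GeneratingCC2}; and its characteristic is moreover $(t,x)$-free if and only if the~\eqref{eq:IDFM:GeneratingCC2}-summand is absent. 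Each conservation law in the larger span has an order-$0$ representative — the first two families even have hydrodynamic representatives, and~\eqref{eq:IDFM:GeneratingCC2} is displayed in order-$0$ form — so this span is exactly the space of zeroth-order conservation laws, which proves~(ii). Removing the~\eqref{eq:IDFM:GeneratingCC2}-summand leaves the span of the second family and of the first family with $\Omega=\Omega(\ri^3)$, which by the above consists precisely of the hydrodynamic conservation laws; it is infinite-dimensional, e.g.\ because the conservation laws corresponding to $\Omega=(\ri^3)^n$, $n\in\mathbb N_0$, are linearly independent (no nonconstant function of $\omega^0$ lies in $\operatorname{im}\hat{\mathscr A}$). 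This proves~(i).

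\textbf{Main obstacle and an alternative.} The delicate point is the order bookkeeping in the middle step: one must verify that $\tilde{\mathfrak Q}=\tilde{\mathscr D}_z$ is genuinely the only admissible operator giving an order-$0$ third-family characteristic, establish the $\mathsf E'$-order statement for the first family, and — most carefully — rule out cancellations of positive orders between families, which is what forces the use of the fact that $\ri^3$-dependence is confined to the first family. A more computational, self-contained route to part~(i) is to require directly that a hydrodynamic conserved current $(\rho(\ri),\sigma)$ exist: splitting $\mathscr D_t\rho+\mathscr D_x\sigma=0$ with respect to the first-order jet variables gives $\sigma_{\ri^i}=V^i\rho_{\ri^i}$, whose compatibility conditions read $2\rho_{\ri^1\ri^2}=\rho_{\ri^2}-\rho_{\ri^1}$, $\rho_{\ri^1\ri^3}=\rho_{\ri^3}$, $\rho_{\ri^2\ri^3}=-\rho_{\ri^3}$, with general solution $\rho={\rm e}^{\ri^1-\ri^2}K(\ri^3)+m(\ri^1,\ri^2)$ where $2m_{\ri^1\ri^2}=m_{\ri^2}-m_{\ri^1}$; since $\p_{\ri^1}-\p_{\ri^2}$ is onto the solution space of the latter equation, $m$ is a second-family density, and $\rho$ is exactly of the asserted form.
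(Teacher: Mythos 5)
Your proof is correct and follows essentially the same route as the paper's: both read the corollary off Theorems~\ref{thm:IDFM:CLs} and~\ref{thm:IDFM:CLChars}, the two key points being that orders cannot cancel when combining conserved currents (or characteristics) from different families and that the zeroth-order part of the third family is one-dimensional, spanned by the characteristic of~\eqref{eq:IDFM:GeneratingCC2}; your appended direct computation of hydrodynamic densities is precisely the alternative the paper delegates to \cite[Theorem~22]{OpanasenkoBihloPopovychSergyeyev2020}. The one imprecision is justifying the intra-third-family non-cancellation by ``injectivity of the pullback on the basis'' --- linear independence of the images does not by itself prevent a combination from having lower order than its summands; what is actually needed (and true) is that the leading positive-order terms of the pulled-back characteristics are linearly independent, so that no combination involving an operator other than $\tilde{\mathscr D}_z$ can drop to order zero --- but the paper asserts the same fact with no more justification.
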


\begin{proof}
This assertion was proved in~\cite[Theorem~22]{OpanasenkoBihloPopovychSergyeyev2020} by the direct computation.
 At the same time, it is a simple corollary of Theorems~\ref{thm:IDFM:CLs} and~\ref{thm:IDFM:CLChars}.
Indeed, when linearly combining conserved currents from different families of Theorem~\ref{thm:IDFM:CLs},
the maximum of their orders is preserved.
The selection of zeroth-order conserved currents from the first and the second families is obvious.
Theorem~\ref{thm:IDFM:CLChars} implies
that the space of zeroth-order characteristics related to the third family
is one-dimensional and spanned by the characteristic
${\rm e}^{(\ri^1-\ri^2)/2}\big(\,\tilde q,\, -\tilde{\mathscr D}_z\tilde q,\,0\,\big)$
of the conservation law containing the conserved current~\eqref{eq:IDFM:GeneratingCC2}.
\end{proof}

\begin{corollary}\label{cor:IDFM1st-OrderCLs}
The space of zeroth- and first-order conservation laws of the system~\eqref{eq:IDFMDiagonalizedSystem}
is naturally isomorphic to the space spanned by the conserved currents
from the second family of Theorem~\ref{thm:IDFM:CLs}
and from the first family, where the parameter function~$\Omega$ runs through the space of smooth functions
of~$(\omega^0,\omega^1):=(\ri^3,{\rm e}^{\ri^2-\ri^1}\ri^3_x)$
and such two functions should be assumed equivalent if their difference is of the form $f(\omega^0)\omega^1$,
as well as the conserved currents from the third family,
where the operator~$\tilde{\mathfrak Q}$ runs through the set
$
\big\{\tilde{\mathscr D}_z,\,\tilde{\mathscr D}_y,\,\tilde{\mathscr J},\,
\tilde{\mathscr D}_z^3,\,(\tilde{\mathscr J}-1)\tilde{\mathscr D}_z^2\big\}
$.
\end{corollary}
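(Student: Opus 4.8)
The plan is to argue exactly as in the proof of Corollary~\ref{cor:IDFM0th-OrderCLs}, now admitting order one. By Theorem~\ref{thm:IDFM:CLs} every conservation law of~\eqref{eq:IDFMDiagonalizedSystem} is, up to the equivalences recorded there, a sum of conserved currents from the three families, and linearly combining such currents preserves the maximum of their orders; hence it suffices to single out, within each family separately, the conservation laws possessing a representative of order at most one, and then to recall that the three families meet one another only in conservation laws of order zero. The second family is kept in full, since its densities ${\rm e}^{(\ri^1-\ri^2)/2}(2\Phi_{\ri^1}+\Phi)$ depend on $\ri^1,\ri^2$ alone and are of order zero. For the first family only $\ri^3$ and its derivatives enter (the other two families are $\ri^3$-free), so the order of a first-family conservation law equals $\min_h\ord_\omega(\Omega+\hat{\mathscr A}h)$, where $\big({\rm e}^{\ri^1-\ri^2}\Omega,(\ri^1+\ri^2){\rm e}^{\ri^1-\ri^2}\Omega\big)$ is the density--flux pair, $\ord_\omega\omega^\kappa=\kappa$, and the only freedom is adding $\hat{\mathscr A}h$ to $\Omega$. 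This minimum is at most one exactly when $\Omega$ differs from a function of $(\omega^0,\omega^1)$ by an element of $\mathop{\rm im}\hat{\mathscr A}$; and since $\hat{\mathscr A}g=\omega^1 g_{\omega^0}+\omega^2 g_{\omega^1}$, an element of $\mathop{\rm im}\hat{\mathscr A}$ that is a function of $(\omega^0,\omega^1)$ only must come from some $g=g(\omega^0)$ and thus has the form $g'(\omega^0)\omega^1$. Therefore the order-at-most-one part of the first family is parameterized by smooth functions $\Omega=\Omega(\omega^0,\omega^1)$ modulo the subspace $\{f(\omega^0)\omega^1\}$, as claimed.

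The third family carries the main difficulty, and it is here that the obstacle lies: its parameterization by the operators $\tilde{\mathfrak Q}$ of Theorem~\ref{thm:IDFM:CLs} does not expose the order of the corresponding conservation law, because a third-family conserved current often drops in order after an additional ``integration by parts''. This is so because under the transformation~$\mathcal T$ the pullbacks of $q$, $q_z$, $q_y$ and $q_{zz}$ listed in Section~\ref{sec:IDFMSolutionsViaEssentialSubsystem} are of $\ri$-orders $0$, $0$, $1$ and $1$ respectively --- lower than a blind count based on $\tilde{\mathscr D}_y$, $\tilde{\mathscr D}_z$ would give --- and because the relation $\tilde{\mathscr D}_y\tilde{\mathscr D}_z\tilde q=\tilde q$ collapses $\tilde{\mathscr D}_y\tilde{\mathscr D}_z$ on solutions, so that $\tilde{\mathscr D}_z$ and $\tilde{\mathscr D}_y$ are strongly asymmetric. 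The plan is to replace, before mapping, the Klein--Gordon conserved currents ${\rm C}_{\mathfrak Q}$ by their minimal-order representatives ${\rm C}^1_{\kappa\iota},{\rm C}^2_{\kappa\iota}$ from~\cite[Section~4]{OpanasenkoPopovych2018} (cf.\ Remark~\ref{rem:IDFM:3rdFamilyOfCLs}), push them forward to~\eqref{eq:IDFMDiagonalizedSystem} by the procedure of the proof of Theorem~\ref{thm:IDFM:CLs}, perform the extra integration by parts needed for the lowest indices exactly as at the end of the proof of Theorem~\ref{thm:IDFMGeneratingSetOfCLs}, and read off the resulting $\ri$-order. The discarded operators --- $\tilde{\mathscr D}_y^3$, $(\tilde{\mathscr J}+1)\tilde{\mathscr D}_y^2$, $(\tilde{\mathscr J}+1/2)^2\tilde{\mathscr D}_y$, $(\tilde{\mathscr J}-1/2)^2\tilde{\mathscr D}_z$, $\tilde{\mathscr J}^3$, and every operator with $\kappa+\iota\geqslant5$ or $\kappa'\geqslant5$ --- are excluded by bounding the order of the associated conservation law below through the order of its reduced conservation-law characteristic from Theorem~\ref{thm:IDFM:CLChars}, which for them exceeds two; while for the five retained operators $\tilde{\mathscr D}_z$, $\tilde{\mathscr D}_y$, $\tilde{\mathscr J}$, $\tilde{\mathscr D}_z^3$, $(\tilde{\mathscr J}-1)\tilde{\mathscr D}_z^2$ one simply exhibits explicit conserved currents of order at most one.

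The hardest point is precisely this order bookkeeping for the third family: following the action of the non-commuting operators $\tilde{\mathscr D}_y$, $\tilde{\mathscr D}_z$, $\tilde{\mathscr J}$ on $\tilde q$, exploiting the $\tilde{\mathscr D}_y$ versus $\tilde{\mathscr D}_z$ asymmetry in the borderline cases at level $\kappa+\iota=3$ --- which is the very source of the $\tilde{\mathscr D}_z^3$ versus $\tilde{\mathscr D}_y^3$ dichotomy --- and controlling the residual integration by parts after the mapping. Once this is settled, since combining or recombining conserved currents never lowers an order and the third family meets the span of the first two only in conservation laws of order zero, the union of the three selected pieces is exactly the space of zeroth- and first-order conservation laws of~\eqref{eq:IDFMDiagonalizedSystem}, which yields the stated description.
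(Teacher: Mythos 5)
Your proposal is correct and follows essentially the same route as the paper: it reduces the problem to the three families of Theorem~\ref{thm:IDFM:CLs}, keeps the second family wholesale, identifies the order of a first-family conservation law with the minimal $\ord_\omega$ of $\Omega$ modulo $\mathop{\rm im}\hat{\mathscr A}$ (whence the quotient by $f(\omega^0)\omega^1$), and selects the admissible third-family operators by the necessary condition that the reduced characteristic of Theorem~\ref{thm:IDFM:CLChars} have order at most two, checking the five survivors by exhibiting low-order representatives. The extra detail you supply on $\mathop{\rm im}\hat{\mathscr A}\cap\{\Omega(\omega^0,\omega^1)\}$ and on the integration-by-parts bookkeeping for the third family is consistent with Remark~\ref{rem:IDFM:3rdFamilyOfCLs} and does not change the argument.
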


\begin{proof}
In the same spirit as in the proof of Corollary~\ref{cor:IDFM0th-OrderCLs},
we select the zeroth- and first-order conserved currents equivalent to those listed in Theorem~\ref{thm:IDFM:CLs}
using Theorem~\ref{thm:IDFM:CLChars} for estimating the orders of the associated conservation laws.
Thus, the selection of the conserved currents from the second family is again obvious
since all of then are of order zero.
The order of a conservation law related to the first family coincides with
the minimal order of the associated~$\Omega$'s.
In general, for zeroth- and first-order conservation laws of the system~\eqref{eq:IDFMDiagonalizedSystem},
the order of corresponding reduced characteristics is not greater than two.
This is why a conservation law related to the span of the third family is of order not greater than one
if and only if it contains a conserved current corresponding to
$\tilde{\mathfrak Q}\in\big\langle\tilde{\mathscr D}_z,\,\tilde{\mathscr D}_y,\,\tilde{\mathscr J},\,
\tilde{\mathscr D}_z^3,\,(\tilde{\mathscr J}-1)\tilde{\mathscr D}_z^2\big\rangle$.
\end{proof}

\begin{theorem}\label{thm:IDFM:TranslationInvCLs}
The space of $(t,x)$-translation-invariant conservation laws of the system~\eqref{eq:IDFMDiagonalizedSystem}
is naturally isomorphic to the space spanned by the conserved currents
from the first and second families of Theorem~\ref{thm:IDFM:CLs}
as well as the conserved currents from the span of the third family
that have the form $\tilde{\rm C}_{\tilde{\mathfrak Q}}$ of elements of this family,
\begin{gather}\label{eq:IDFM:FormOfCCsFrom3rdFamily}
\big(\ri^2_x\tilde\rho+\ri^1_x\tilde\sigma,\ (\ri^1+\ri^2-1)\ri^2_x\tilde\rho+(\ri^1+\ri^2+1)\ri^1_x\tilde\sigma\big)
\ \mbox{with}\
\tilde\rho=-\tilde q\tilde{\mathscr D}_z\tilde{\mathfrak Q}\tilde q,\
\tilde\sigma=(\tilde{\mathscr D}_y\tilde q)\tilde{\mathfrak Q}\tilde q,
\end{gather}
where the operator~$\tilde{\mathfrak Q}$ runs through the set $\mathfrak T$
constituted by the operators
\begin{gather*}
\tilde{\mathfrak Z}_{\kappa\iota}:=(\tilde{\mathscr D}_z+1)^2(\tilde{\mathscr J}-\iota/2)^\kappa\tilde{\mathscr D}_z^\iota(\tilde{\mathscr D}_z-1)^2,\quad
\tilde{\mathfrak Y}_{\kappa,\iota+4}:=(\tilde{\mathscr D}_y+1)^2(\tilde{\mathscr J}+\iota/2)^\kappa\tilde{\mathscr D}_y^\iota(\tilde{\mathscr D}_y-1)^2,\\
\qquad\kappa,\iota\in\mathbb N_0\ \mbox{with}\ \kappa+\iota\in2\mathbb N_0+1,
\\
\tilde{\mathfrak Y}_{\kappa1}:=(\tilde{\mathscr J}+1/2)^\kappa(\tilde{\mathscr D}_y+\tilde{\mathscr D}_z-2)
+(\tilde{\mathscr D}_z+2)(\tilde{\mathscr J}-1/2)^\kappa(\tilde{\mathscr D}_z-1)^2,\quad \kappa\in2\mathbb N_0,
\\
\tilde{\mathfrak Y}_{\kappa2}:=2\tilde{\mathscr J}^\kappa(\tilde{\mathscr D}_y+\tilde{\mathscr D}_z-2)
+(\tilde{\mathscr J}+1)^\kappa(\tilde{\mathscr D}_y-1)^2+(\tilde{\mathscr J}-1)^\kappa(\tilde{\mathscr D}_z-1)^2,\quad \kappa\in2\mathbb N_0+1,
\\
\tilde{\mathfrak Y}_{\kappa3}:=(\tilde{\mathscr J}-1/2)^\kappa(\tilde{\mathscr D}_y+\tilde{\mathscr D}_z-2)
+(\tilde{\mathscr D}_y+2)(\tilde{\mathscr J}+1/2)^\kappa(\tilde{\mathscr D}_y-1)^2,\quad \kappa\in2\mathbb N_0.
\end{gather*}
\end{theorem}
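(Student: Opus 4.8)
The plan is to first reduce the claim to the third family of conserved currents of Theorem~\ref{thm:IDFM:CLs} and then to transfer the remaining question to the $(1+1)$-dimensional Klein--Gordon equation~\eqref{eq:IDFMSupersystemA}. Recall that a conservation law of~\eqref{eq:IDFMDiagonalizedSystem} is $(t,x)$-translation-invariant if and only if it possesses a conserved current that does not depend explicitly on~$t$ and~$x$. The conserved currents of the first and second families of Theorem~\ref{thm:IDFM:CLs} are built solely from~$\ri^1$, $\ri^2$, the variables~$\omega^\kappa$, the factor~${\rm e}^{\ri^1-\ri^2}$ and a function~$\Phi(\ri^1,\ri^2)$, hence are free of explicit~$t$ and~$x$; so the corresponding conservation laws are automatically $(t,x)$-translation-invariant. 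Conversely, given a $(t,x)$-translation-invariant conservation law, I would pick a $(t,x)$-free representative and decompose it, according to Theorem~\ref{thm:IDFM:CLs}, into a sum of elements of the three families plus a null divergence; since the first- and second-family summands are already $(t,x)$-free, the third-family component must be equivalent to a $(t,x)$-free conserved current. It thus suffices to single out the conservation laws within the span of the third family that admit $(t,x)$-free conserved currents and to identify them with the operators collected in~$\mathfrak T$.

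For the second step I would use that the third family is the image, under the point transformation~$\mathcal T$ of Section~\ref{sec:IDFMSolutionsViaEssentialSubsystem}, of the conserved currents ${\rm C}_{\mathfrak Q}=(-q\mathrm D_z\mathfrak Q q,\,q_y\mathfrak Q q)$ of~\eqref{eq:IDFMSupersystemA}, where~$\mathfrak Q$ runs through the basis of the skew-adjoint subspace~$\tilde\Lambda^{\rm q}_-$ and the conservation law containing~${\rm C}_{\mathfrak Q}$ has characteristic $(\mathfrak Q-\mathfrak Q^\dag)q$. Pushing the translation vector fields~$\p_t$ and~$\p_x$ of~\eqref{eq:IDFMDiagonalizedSystem} forward by~$\mathcal T$ (and neglecting~$p$) yields the point symmetries $-(2y-2z+1){\rm e}^{y+z}\p_q$ and~${\rm e}^{y+z}\p_q$ of~\eqref{eq:IDFMSupersystemA}, whose characteristics span the two-dimensional space $V:=\langle\,{\rm e}^{y+z},\,(y-z){\rm e}^{y+z}\,\rangle\subset{\rm KG}$, which is the counterpart under~$\mathcal T$ of the subspace $\langle\,{\rm e}^{(\ri^1-\ri^2)/2},\,(\ri^1+\ri^2){\rm e}^{(\ri^1-\ri^2)/2}\,\rangle$ occurring in Theorem~\ref{thm:IDFMCompleteSolutioN}. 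Since~\eqref{eq:IDFMSupersystemA} is the Euler--Lagrange equation of $\mathrm L=-(q_yq_z+q^2)/2$ and these two symmetries are variational, acting by~$g\p_q$ with $g\in{\rm KG}$ on the conservation law with characteristic $(\mathfrak Q-\mathfrak Q^\dag)q$ produces the one with characteristic $(\mathfrak Q-\mathfrak Q^\dag)g\in{\rm KG}$; the latter conservation law is of the form~$\bar{\rm C}^0_f$ with $f=(\mathfrak Q-\mathfrak Q^\dag)g$, and such a conservation law maps under~$\mathcal T$ to a trivial conservation law of~\eqref{eq:IDFMDiagonalizedSystem} (a null divergence among the second family) if and only if $f=0$, cf.\ the proof of Theorem~\ref{thm:IDFM:CLs}. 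Consequently, the image of~${\rm C}_{\mathfrak Q}$ under~$\mathcal T$ is a $(t,x)$-translation-invariant conservation law of~\eqref{eq:IDFMDiagonalizedSystem} if and only if the skew-adjoint operator~$\mathfrak Q$ annihilates~$V$.

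What remains is the algebraic problem of describing the skew-adjoint operators in~$\tilde\Lambda^{\rm q}_-$, considered modulo the operators that vanish on solutions of~\eqref{eq:IDFMSupersystemA} (i.e.\ modulo the ideal generated by $\mathrm D_y\mathrm D_z-1$), that annihilate~$V$, and then pushing the answer forward by~$\mathcal T$. Solving~\eqref{eq:IDFMSupersystemA} together with any one of $(\mathrm D_z-1)^2q=0$, $(\mathrm D_y-1)^2q=0$, $(\mathrm D_y+\mathrm D_z-2)q=0$ yields precisely $q\in V$; hence each of $(\mathrm D_z-1)^2$, $(\mathrm D_y-1)^2$ and $\mathrm D_y+\mathrm D_z-2$ kills~$V$, has kernel exactly~$V$ on~${\rm KG}$ and is surjective there. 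For the inclusion ``$\supseteq$'' I would check directly that every operator listed in~$\mathfrak T$ carries one of these three expressions as its rightmost factor and is assembled so as to be formally skew-adjoint --- which is why the one-sided factors $(\mathrm D_z-1)^2$, $(\mathrm D_y-1)^2$, $\mathrm D_y+\mathrm D_z-2$ appear sandwiched with $(\mathrm D_z+1)^2$, $(\mathrm D_y+1)^2$, $\mathrm D_y+\mathrm D_z+2$ on the left, or combined with correction terms as in $\tilde{\mathfrak Y}_{\kappa1},\tilde{\mathfrak Y}_{\kappa2},\tilde{\mathfrak Y}_{\kappa3}$ --- so that it annihilates~$V$. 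For ``$\subseteq$'' I would factor any $V$-annihilating operator, modulo $\mathrm D_y\mathrm D_z-1$, through $(\mathrm D_z-1)^2$ (respectively $(\mathrm D_y-1)^2$) using the surjectivity and the kernel computation, and then impose skew-adjointness to pin down the sandwiched normal form; the exceptional operators $\tilde{\mathfrak Y}_{\kappa1},\tilde{\mathfrak Y}_{\kappa2},\tilde{\mathfrak Y}_{\kappa3}$ (the analogues of the low-order cases with second index in $\{1,2,3\}$) arise because there the $(\mathrm D_y-1)^2$- and $(\mathrm D_z-1)^2$-factorizations overlap and one extra ``integration by parts'' is needed. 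Alternatively one can carry out this classification by a direct (if lengthy) computation with the chosen basis of~$\tilde\Lambda^{\rm q}_-$, imposing the two linear conditions $\mathfrak Q\,{\rm e}^{y+z}=0$ and $\mathfrak Q\big((y-z){\rm e}^{y+z}\big)=0$ on a general element.

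Finally, to write down the explicit conserved currents~\eqref{eq:IDFM:FormOfCCsFrom3rdFamily} and to verify constructively that each conservation law with $\tilde{\mathfrak Q}\in\mathfrak T$ really is $(t,x)$-translation-invariant, I would use the analogue of the trick employed for the second family in the proof of Theorem~\ref{thm:IDFM:CLs}, cf.\ Remark~\ref{rem:IDFM:3rdFamilyOfCLs}: integrating~${\rm C}_{\mathfrak Q}$ by parts so as to move all differentiations off the bare factor~$q$ rewrites its density, modulo a null divergence, as a bilinear expression in $K^1=(\mathrm D_z-1)^2q$, $K^2=(\mathrm D_y+\mathrm D_z-2)q$ and their derivatives, whose pullback by~$\mathcal T$ is free of~$t$ and~$x$ in view of the formulas for $\mathcal T^*K^1$ and $\mathcal T^*K^2$ in Section~\ref{sec:IDFMPreliminaries}. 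Pushing the whole picture forward by~$\mathcal T$ --- replacing $\mathrm D_y,\mathrm D_z,\mathrm J,q$ by $\tilde{\mathscr D}_y,\tilde{\mathscr D}_z,\tilde{\mathscr J},\tilde q$ --- then turns~$\mathfrak T$ into the displayed list of operators and~${\rm C}_{\mathfrak Q}$ into the conserved currents~\eqref{eq:IDFM:FormOfCCsFrom3rdFamily}. I expect the main obstacle to be the exhaustiveness direction ``$\subseteq$'', and within it the careful treatment of the low-order exceptional operators $\tilde{\mathfrak Y}_{\kappa1},\tilde{\mathfrak Y}_{\kappa2},\tilde{\mathfrak Y}_{\kappa3}$, where the generic sandwiched pattern degenerates.
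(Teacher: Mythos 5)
Your proposal is correct and follows the same overall strategy as the paper: reduce the question to the third family, characterize translation invariance on the Klein--Gordon side by the two linear conditions $\mathfrak Q\,{\rm e}^{y+z}=0$ and $\mathfrak Q\big((y-z){\rm e}^{y+z}\big)=0$, prove sufficiency for the operators in~$\mathfrak T$ by rewriting ${\rm C}_{\mathfrak Q}$ via the sandwiched form $(\mathrm D_z+1)^2\mathfrak P(\mathrm D_z-1)^2$ so that the density becomes bilinear in $K^1$, $K^2$ and their derivatives, and rule out the rest. Two steps differ in method. First, you obtain the annihilation condition by pushing the translations forward to variational symmetries of~\eqref{eq:IDFMSupersystemA} and letting them act on the conservation law; the paper gets the same condition more directly by reading the explicit characteristic ${\rm e}^{(\ri^1-\ri^2)/2}\big(-\tilde{\mathscr D}_y\tilde{\mathfrak Q}\tilde q,\,\tilde{\mathfrak Q}\tilde q,\,0\big)$ off Theorem~\ref{thm:IDFM:CLChars} and requiring it to be free of $t$ and~$x$, which avoids the bookkeeping of how a symmetry acts on a characteristic. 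Second, for exhaustiveness your primary route (factoring every $V$-annihilating skew-adjoint operator through $(\mathrm D_z-1)^2$ or $(\mathrm D_y-1)^2$ modulo the ideal of $\mathrm D_y\mathrm D_z-1$) is plausible but is the least developed step and would require an actual division argument in the noncommutative algebra generated by $\mathrm D_y$, $\mathrm D_z$, $\mathrm J$; the paper instead exhibits the explicit complement $\bar{\mathfrak T}=\big\langle\,\mathrm J^{2\kappa+1},(\mathrm J+1)^{2\kappa+1}\mathrm D_z^2,(\mathrm J+1/2)^{2\kappa}\mathrm D_z,(\mathrm J+3/2)^{2\kappa}\mathrm D_z^3,\ \kappa\in\mathbb N_0\,\big\rangle$ and kills it by a short leading-coefficient computation on the two conditions --- precisely the fallback you mention, which should therefore be regarded as the actual argument. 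Finally, you are right to keep the explicit construction of $(t,x)$-free conserved currents as a separate step: invariance of the characteristic is only necessary, so that construction is an essential part of the proof rather than a verification.
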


\begin{proof}
Denote by~$\bar{\mathfrak T}$ a complementary subspace of the span of~$\mathfrak T$
in the span of the set run by~$\tilde{\mathfrak Q}$ in the third family of Theorem~\ref{thm:IDFM:CLs}.
Since conserved currents from the first and second families of Theorem~\ref{thm:IDFM:CLs}
are $(t,x)$-translation-invariant,
it suffices to prove that conserved currents of the form~\eqref{eq:IDFM:FormOfCCsFrom3rdFamily}
with $\tilde{\mathfrak Q}\in\mathfrak T$ (resp.\ with nonzero $\tilde{\mathfrak Q}\in\bar{\mathfrak T}$)
are equivalent (resp.\ not equivalent) to $(t,x)$-translation-invariant ones.

For each $\tilde{\mathfrak Q}\in\mathfrak T$ we explicitly construct
a related $(t,x)$-translation-invariant conserved current.
To this end, we consider the associated operator~$\mathfrak Q$ in~$\tilde\Lambda^{\rm q}_-$,
choose an appropriate conserved current of the Klein--Gordon equation~\eqref{eq:IDFMSupersystemA}
among those equivalent to~${\rm C}_{\mathfrak Q}$
and map it to a conserved current of the system~\eqref{eq:IDFMDiagonalizedSystem}.
Each operator~$\mathfrak Q\in\tilde\Lambda^{\rm q}_-$ associated with some $\tilde{\mathfrak Q}\in\mathfrak T$
is equivalent, on solutions of~\eqref{eq:IDFMSupersystemA}, to an operator
of the form $(\mathrm D_z+1)^2\mathfrak P(\mathrm D_z-1)^2$ with $(\mathfrak Pq)\p_q\in\tilde\Lambda^{\rm q}_-$,
where the operator~$\mathfrak P$ coincides with
$(\mathrm J-\iota/2)^\kappa\mathrm D_z^\iota$,
$(\mathrm J+\iota/2+2)^\kappa\mathrm D_z^{\iota+4}$,
$(\mathrm J+1/2)^\kappa\mathrm D_z$,
$(\mathrm J+1)^\kappa\mathrm D_z^2$,
$(\mathrm J+3/2)^\kappa\mathrm D_z^3$
for
$\tilde{\mathfrak Z}_{\kappa\iota}$,
$\tilde{\mathfrak Y}_{\kappa,\iota+4}$,
$\tilde{\mathfrak Y}_{\kappa1}$,
$\tilde{\mathfrak Y}_{\kappa2}$ and
$\tilde{\mathfrak Y}_{\kappa3}$, respectively.
For such~$\mathfrak Q$ we obtain
\begin{gather*}
{\rm C}_{\mathfrak Q}\sim(-K^1\mathscr D_z\mathfrak PK^1,K^2\mathfrak PK^1)\\
\qquad \mapsto\ 2{\rm e}^{(\ri^1-\ri^2)/2}\bigg(
(\tilde{\mathscr D}_z+1)\tilde{\mathfrak P}\frac{{\rm e}^{(\ri^1-\ri^2)/2}}{\ri^2_x},\
(V^2\tilde{\mathscr D}_z+V^1)\tilde{\mathfrak P}\frac{{\rm e}^{(\ri^1-\ri^2)/2}}{\ri^2_x}
\bigg),
\end{gather*}
which is obviously a $(t,x)$-translation-invariant conserved current of the system~\eqref{eq:IDFMDiagonalizedSystem}.

As a subspace complementary to the span of~$\mathfrak T$, we can choose
\[
\bar{\mathfrak T}=\big\langle\,
\mathrm J^{2\kappa+1},\,
(\mathrm J+1)^{2\kappa+1}\mathrm D_z^2,\,
(\mathrm J+1/2)^{2\kappa}\mathrm D_z,\,
(\mathrm J+3/2)^{2\kappa}\mathrm D_z^3,\, \kappa\in\mathbb N_0 \,\big\rangle.
\]
We prove by contradiction that for any nonzero $\tilde{\mathfrak Q}\in\bar{\mathfrak T}$, i.e.,
\[
\tilde{\mathfrak Q}=\sum_{\kappa=0}^N\Big(
c_{0\kappa}\mathrm J^{2\kappa+1}
+c_{2\kappa}(\mathrm J+1)^{2\kappa+1}\mathrm D_z^2
+c_{1\kappa}(\mathrm J+1/2)^{2\kappa}\mathrm D_z
+c_{3\kappa}(\mathrm J+3/2)^{2\kappa}\mathrm D_z^3
\Big)
\]
for some $N\in\mathbb N_0$ and some constants~$c$'s with $(c_{0N},c_{1N},c_{2N},c_{3N})\ne(0,0,0,0)$,
the corresponding conserved current of the form~\eqref{eq:IDFM:FormOfCCsFrom3rdFamily}
is not equivalent to a $(t,x)$-translation-invariant one.
Suppose that this is not the case.
If a conservation law of the system~\eqref{eq:IDFMDiagonalizedSystem} is $(t,x)$-translation-invariant,
then its characteristic is also $(t,x)$-translation-invariant.
The conservation-law characteristic associated with $\tilde{\mathfrak Q}$ (see Theorem~\ref{thm:IDFM:CLChars})
does not depend on $x$ and~$t$ if and only if
$(\tilde{\mathfrak Q}\tilde q)_x= \tilde{\mathfrak Q}{\rm e}^{(\ri^1-\ri^2)/2}=0$ and
$(\tilde{\mathfrak Q}\tilde q)_t=-\tilde{\mathfrak Q}\big((\ri^1+\ri^2+1){\rm e}^{(\ri^1-\ri^2)/2}\big)=0$.
In the coordinates~\eqref{eq:IDFMTransReducingToKGEq},
these conditions, after re-combining, take the form
$\mathfrak Q{\rm e}^{y+z}=0$, $\mathfrak Q\big((y-z){\rm e}^{y+z}\big)=\mathfrak Q\mathrm J{\rm e}^{y+z}=0$,
or, equivalently,
\begin{gather*}
R^1:=\sum_{\kappa=0}^N\Big(
c_{0\kappa}\mathrm J^{2\kappa+1}
+c_{2\kappa}(\mathrm J+1)^{2\kappa+1}
+c_{1\kappa}(\mathrm J+1/2)^{2\kappa}
+c_{3\kappa}(\mathrm J+3/2)^{2\kappa}
\Big){\rm e}^{y+z}=0,
\\
R^2:=\sum_{\kappa=0}^N\Big(
c_{0\kappa}\mathrm J^{2\kappa+2}
+c_{2\kappa}(\mathrm J+1)^{2\kappa+1}(\mathrm J-2)
+c_{1\kappa}(\mathrm J+1/2)^{2\kappa}(\mathrm J-1)\\ \qquad\qquad\quad{}
+c_{3\kappa}(\mathrm J+3/2)^{2\kappa}(\mathrm J-3)
\Big){\rm e}^{y+z}=0.
\end{gather*}
The left-hand sides of these equations, $R^1$ and~$R^2$,
are polynomials of~$y-z$ and~$y+z$ multiplied by ${\rm e}^{y+z}$,
and the highest degrees of~$y-z$ correspond to the highest degrees of~$\mathrm J$.
Recombining these equations to
\begin{gather*}
R^2-\mathrm JR^1=-\sum_{\kappa=0}^N\Big(
2c_{2\kappa}(\mathrm J+1)^{2\kappa+1}
+c_{1\kappa}(\mathrm J+1/2)^{2\kappa}
+3c_{3\kappa}(\mathrm J+3/2)^{2\kappa}
\Big){\rm e}^{y+z}=0,
\\
R^2-(\mathrm J-2)R^1=\sum_{\kappa=0}^N\Big(
2c_{0\kappa}\mathrm J^{2\kappa+1}
+c_{1\kappa}(\mathrm J+1/2)^{2\kappa}
-c_{3\kappa}(\mathrm J+3/2)^{2\kappa}
\Big){\rm e}^{y+z}=0,
\end{gather*}
we easily see that $c_{0N}=c_{2N}=0$ and thus also $c_{1N}=c_{2N}=0$,
which contradicts the supposition $(c_{0N},c_{1N},c_{2N},c_{3N})\ne(0,0,0,0)$.
\end{proof}

In order to construct a lowest-order $(t,x)$-translation-invariant conserved current
for conservation laws associated with operators from~$\mathfrak T$,
for the respective operator~$\mathfrak P$ we should take the respective (up to a constant multiplier)
conserved current among
${\rm C}^1_{\kappa\iota}$, $\kappa\in\mathbb N_0$, $\iota\in\mathbb N$,
$\bar{\rm C}^1_{\kappa\iota}$, ${\rm C}^2_{\kappa\iota}$, $\bar{\rm C}^2_{\kappa\iota}$, $\kappa,\iota\in\mathbb N_0,$
presented in~\cite[Section~4]{OpanasenkoPopovych2018},
formally replace $(x,y,u)$ by $(y,z,K^1)$ and map the obtained conserved current.
In particular, linearly independent $(t,x)$-translation-invariant inequivalent conserved currents up to order two
from the span of the third family of Theorem~\ref{thm:IDFM:CLs} are exhausted by the following:
\begin{gather*}
\tilde{\mathfrak Q}=\tilde{\mathfrak Y}_{01}=\tilde{\mathscr D}_z^3-2\tilde{\mathscr D}_z+\tilde{\mathscr D}_y\colon\quad
\mathfrak P=\mathscr D_y,\quad
{\rm C}_{\mathfrak Q}\sim\big(-(K^1)^2,\,(K^2)^2\big) \\
\qquad \mapsto\ 2{\rm e}^{\ri^1-\ri^2}\left(\frac1{\ri^2_x}-\frac1{\ri^1_x},\ \frac{V^2}{\ri^2_x}-\frac{V^1}{\ri^1_x}\right),
\\[1ex]
\tilde{\mathfrak Q}=\tilde{\mathfrak Y}_{03}=\tilde{\mathscr D}_y^3-2\tilde{\mathscr D}_y+\tilde{\mathscr D}_z\colon\quad
\mathfrak P=\mathscr D_y^3,\quad
{\rm C}_{\mathfrak Q}\sim\big((K^2)^2,\,-(\mathscr D_yK^2)^2\big) \\
\qquad \mapsto\ \frac2{(\ri^1_x)^5}{\rm e}^{\ri^1-\ri^2}\Big(
(2\ri^1_{xx}+\ri^1_x\ri^2_x)^2-\ri^2_x(\ri^1_x)^3,\
V^1(2\ri^1_{xx}+\ri^1_x\ri^2_x)^2-V^2\ri^2_x(\ri^1_x)^3
\Big),
\\[1ex]
\tilde{\mathfrak Q}=\tilde{\mathfrak Z}_{01}=\tilde{\mathscr D}_z^5-2\tilde{\mathscr D}_z^3+\tilde{\mathscr D}_z\colon\quad
\mathfrak P=\mathscr D_z,\quad
{\rm C}_{\mathfrak Q}\sim\big((\mathscr D_zK^1)^2,\,-(K^1)^2\big) \\
\qquad \mapsto\ \frac{-2}{(\ri^2_x)^5}{\rm e}^{\ri^1-\ri^2}\Big(
(2\ri^2_{xx}-\ri^1_x\ri^2_x)^2-\ri^1_x(\ri^2_x)^3,\
V^2(2\ri^2_{xx}-\ri^1_x\ri^2_x)^2-V^1\ri^1_x(\ri^2_x)^3
\Big),
\\[1ex]
\tilde{\mathfrak Q}=\tilde{\mathfrak Z}_{10}:=(\tilde{\mathscr D}_z+1)^2\tilde{\mathscr J}(\tilde{\mathscr D}_z-1)^2\colon\quad
\mathfrak P=\mathscr J,\\
\qquad {\rm C}_{\mathfrak Q}\sim\big(-y(K^1)^2-z(\mathscr D_zK^1)^2,\,y(K^2)^2+z(K^1)^2\big)\ \mapsto
-{\rm e}^{\ri^1-\ri^2}(\mathfrak z^1+\mathfrak z^2,\,V^1\mathfrak z^1+V^2\mathfrak z^2),
\\
\qquad
\mathfrak z^1:=\frac{\ri^1}{\ri^1_x}-\frac{\ri^2\ri^1_x}{(\ri^2_x)^2},\quad
\mathfrak z^2:=\frac{\ri^2}{(\ri^2_x)^5}(2\ri^2_{xx}-\ri^1_x\ri^2_x)^2-\frac{\ri^1}{\ri^2_x}.
\end{gather*}

\section{Hamiltonian structures of hydrodynamic type}\label{sec:IDFMHamiltonianStructure}

A system~$\mathcal E$ of evolution differential
equations $\mathbf u_t-K[\mathbf u]=0$, where~$K$ is a tuple of functions of independent variables~$(t,\mathbf x)$
and spatial derivatives (including ones of order zero) of the dependent variables $\mathbf u=(u^1,\dots,u^n)^{\mathsf T}$, is called Hamiltonian
if it can be represented in the form
$
\mathbf u_t=\mathfrak H\,\delta\mathcal H.
$
Here~$\mathfrak H$ is a Hamiltonian differential operator, i.e.\ a formally skew-adjoint matrix differential operator,
whose associated bracket~$\{\cdot,\cdot\}$
defined by $\{\mathcal I,\mathcal J\}=\int\delta\mathcal I\cdot \mathfrak H\,\delta\mathcal J\,\mathrm d\mathbf x$
for appropriate functionals~$\mathcal I$ and~$\mathcal J$,
satisfies the Jacobi identity and thus is a Poisson bracket,
$\delta$~stands for the variational derivative,
and the functional $\mathcal H$ is called a Hamiltonian of~$\mathcal E$ with respect to~$\mathfrak H$, see~\cite{DubrovinNovikov1989}.

A procedure for finding a Hamiltonian structure for the system~$\mathcal E$ is as follows:
\begin{itemize}\itemsep=0ex

\item
For the left hand side~$F:=\mathbf u_t-K[\mathbf u]$ of the system~$\mathcal E$, one defines the universal linearization
operator~$\ell_F$ of $F$~\cite{Bocharov1999} (also known as the Fr\'echet derivative of~$F$~\cite{Olver1993})
and its formally adjoint~$\ell^\dag_F$ to determine the linearization of the system~$\mathcal E$ and the system adjoint to the linearization,
\[
\ell_F(\eta)=0,\quad \ell^\dag_F(\lambda)=0.
\]
The differential vector functions~$\eta$ and $\lambda$ of~$\mathbf u$, that is, vector functions of $t$, $\mathbf x$, $\mathbf u$
and their spatial derivatives (time derivatives are excluded in view of the evolutionary form of the equations), satisfying the above
systems in view of the system~$\mathcal E$ are nothing else but symmetries (more precisely, characteristic-tuples of generalized symmetries) and cosymmetries for the
system~$\mathcal E$, respectively.

\item
By making an ansatz one finds Noether operators, which are by definition matrix differential operators mapping cosymmetries
of the system to its symmetries.

\item
One selects a Hamiltonian operator~$\mathfrak H$ amongst Noether ones, by requiring that it is skew-adjoint
and the associated bracket satisfies the Jacobi identity.

\item
Choosing an ansatz for a Hamiltonian~$\mathcal H$, one finds it from the condition~$\mathfrak H\delta\mathcal H=K$.

\end{itemize}

Skew-adjoint Noether operators of systems of evolution equations are believed to satisfy the Jacobi identity automatically
except for first-order scalar equations~\cite[Theorem~5]{KerstenKrasilshchikVerbovetsky2006}.
This result was rigorously proved for systems of evolution equations of order greater than one in~\cite{Gessler1997},
while the same assertion for non-scalar systems of first-order evolution equations was conjectured in~\cite{KerstenKrasilshchikVerbovetsky2004}.
In spite of the fact that, in general, the verification of this conjecture for the system~$\mathcal S$ can be done directly,
we use a geometrical interpretation of hydrodynamic-type Hamiltonian differential operators for hydrodynamic-type systems.
Hereafter we consider a (1+1)-dimensional (translation-invariant) hydrodynamic-type system~$\mathcal E$,
the indices $i$, $j$, $k$ and~$l$ run from~1 to~$n$, and the Einstein summation convention is assumed for the index~$l$.
A matrix differential operator~$\mathfrak D=(\mathfrak D^{ij})$ and the associated bracket
are said to be \emph{of hydrodynamic type} or \emph{of Dubrovin--Novikov type}
if the entries of~$\mathfrak D$ are of the form $\mathfrak D^{ij}=g^{ij}(u)\mathrm D_x+b^{ij}_l(u)u^l_x$.

The cornerstone of the geometric interpretation of hydrodynamic-type Hamiltonian operators,
discovered in the seminal paper~\cite{DubrovinNovikov1983},
is the fact that under a point transformation~$\tilde {\mathbf u}=U(\mathbf u)$ of dependent variables only,
the coefficients~$g^{ij}$ of~$\mathfrak D$ are transformed as components of a second-order contravariant tensor
on the space with the coordinates~$\mathbf u$
and, if the tensor~$(g^{ij})$ is nondegenerate (which is a perpetual assumption below),
the coefficients~$b^{ij}_k$ are transformed so
that~$\Gamma^j_{lk}$ defined by $g^{il}\Gamma^j_{lk}=-b^{ij}_k$ are the Christoffel symbols of
a connection~$\nabla$ on this space.
The bracket associated with~$\mathfrak D$ is skew-symmetric
if and only if the tensor~$(g^{ij})$ is symmetric, i.e.,
$g=(g_{i'\!j'\!})=(g^{ij})^{-1}$ is a \mbox{(pseudo-)}Riemannian metric,
and the connection~$\nabla$ agrees with~$g$, $\nabla g=0$.
The bracket satisfies the Jacobi identity if and only if the metric~$g$ is flat
and the connection~$\nabla$ is the Levi-Civita connection of~$g$,
i.e., the curvature tensor of~$g$ and the torsion tensor of~$\nabla$ vanish.

Recall that two Hamiltonian operators are called compatible if any their linear combination is a Hamiltonian operator as well.
Two nondegene\-rate hydro\-dyna\-mic-type Hamiltonian operators for a hydrodynamic-type system is compatible
if the Nijenhuis tensor~$\mathcal N$ of the tensor~$(s^i_j)$ defined by~$s^i_j=\tilde g^{il}g_{lj}$ vanishes,
\[
\mathcal N^i_{jk}:=s^l_j\p_{u^l}s^i_k-s^l_k\p_{u^l}s^i_j-s^i_l(\p_{u^j}s^l_k-\p_{u^k}s^l_j)=0,
\]
see~\cite{Ferapontov2001,Mokhov1999}.
Here~$g$ and~$\tilde g$ are the metrics corresponding to the Hamiltonian operators.
In terms of~$g$ and~$\tilde g$, the condition of vanishing the Nijenhuis tensor~$\mathcal N$
takes the form
\begin{gather}\label{eq:IDFMCompatibilityCondition}
\nabla^i\nabla^j\tilde g^{kl}+\nabla^k\nabla^l\tilde g^{ij}-\nabla^i\nabla^k\tilde g^{jl}-\nabla^j\nabla^l\tilde g^{ik}=0.
\end{gather}
The covariant differentiation in~\eqref{eq:IDFMCompatibilityCondition} corresponds to the metric~$g$.
The conditions~\eqref{eq:IDFMCompatibilityCondition} are preserved by the permutation of~$g$ and~$\tilde g$,
so that they are indeed the compatibility conditions of the two metrics.

When the tensor~$g$ degenerates at some point,
the associated hydrodynamic-type system loses its geometric charm and one needs to proceed otherwise.
To show that the bracket of a skew-symmetric Noether operator~$\mathfrak N$ for~$\mathcal E$ satisfies the Jacobi identity,
one may equivalently check that the variational Schouten bracket $[\![\mathfrak N,\mathfrak N]\!]$ vanishes.
To show the compatibility of two hydrodynamic-type Hamiltonian operators~$\mathfrak H_1$ and~$\mathfrak H_2$,
$\mathfrak H_k^{ij}=g^{ij}_k\mathrm D_x+b^{ij}_{kl}\ri^l_x$, $k=1,2$,
one may check that $[\![\mathfrak H_1,\mathfrak H_2]\!]=0$, cf.~\cite[Section~10.1]{KrasilshchikVerbovetskyVitolo2017}.
Since $\mathcal E$ is a system of evolution equations, one may consider the
cotangent covering~$T^\ast\mathcal E$ of~$\mathcal E$ (i.e., the joint system $F=0$, $\ell^\dag_F(\lambda)=0$)
and substitute the latter condition by the equivalent one
\[
\mathsf E\sum\limits_{j=1}^n\Big(({\mathsf E_{u^j}\mathrm F_{\mathfrak H_1}})(\mathsf E_{\lambda^j}\mathrm F_{\mathfrak H_2})
+(\mathsf E_{\lambda^j}\mathrm F_{\mathfrak H_1})(\mathsf E_{u^j}\mathrm F_{\mathfrak H_2})\Big)=0,
\]
where $\mathrm F_{\mathfrak H_k}=\sum_{i,j}\big(g^{ij}_{k}(\mathrm D_x\lambda^i)\lambda^j+b^{ij}_{kl}\ri^l_x \lambda^i\lambda^j\big)$, $k=1,2$,
and $\mathsf E=(\mathsf E_{u^1},\dots,\mathsf E_{u^n},\mathsf E_{\lambda^1},\dots,\mathsf E_{\lambda^n})$ is the Euler operator on~$T^\ast\mathcal E$.

\begin{theorem}\label{theorem:IDFMHamiltonianStructures}
The system~\eqref{eq:IDFMDiagonalizedSystem} admits an infinite family of compatible Hamiltonian structures
$\mathfrak H_\Theta$ parameterized by a smooth function~$\Theta$ of~$\ri^3$,
\begin{gather}\label{eq:IDFMHamiltonianOperator}
\mathfrak H_\Theta={\rm e}^{\ri^2-\ri^1}\left(\mathop{\rm diag}\big(-1,1,\Theta(\ri^3){\rm e}^{\ri^2-\ri^1}\big)\mathrm D_x-\frac12
\begin{pmatrix}
\ri^2_x-\ri^1_x  &  \ri^1_x-\ri^2_x  &  -2\ri^3_x\\[1ex]
\ri^2_x-\ri^1_x  &  \ri^1_x-\ri^2_x  &  -2\ri^3_x\\[1ex]
2\ri^3_x         &  2\ri^3_x         &  -2f^{33}{\rm e}^{\ri^1-\ri^2}
\end{pmatrix}\right)
\end{gather}
with the corresponding family of Hamiltonians $\mathcal H_{c_0,\Xi}=\int H_{c_0,\Xi}\mathrm dx$ defined by densities
\begin{gather}\label{eq:IDFMHamiltonian}
H_{c_0,\Xi}=(\ri^1+\ri^2)^2{\rm e}^{\ri^1-\ri^2}+c_0(\ri^1+\ri^2)+2\big(\ri^1-\ri^2+\Xi(\ri^3)\big){\rm e}^{2(\ri^1-\ri^2)}.
\end{gather}
Here $f^{33}:={\rm e}^{2\ri^2-2\ri^1}\left((\ri^2_x-\ri^1_x)\Theta+\ri^3_x\Theta_{\ri^3}\right)$,
$c_0$ is an arbitrary constant and the function~$\Xi$ of~$\ri^3$ satisfies the auxiliary condition
\begin{gather*}
|\Theta|^{1/2}\Xi_{\ri^3\ri^3}+\smash{\frac12}{\Theta}_{\ri^3}\Xi_{\ri^3}=c_0.
\end{gather*}
\end{theorem}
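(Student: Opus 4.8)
The plan is to recognize $\mathfrak H_\Theta$ as a hydrodynamic-type (Dubrovin--Novikov) operator and to use the geometric criteria recalled above in this section, first for those $\Theta$ that are nowhere vanishing and then extending to an arbitrary smooth~$\Theta$ by a density argument. Expanding~\eqref{eq:IDFMHamiltonianOperator} one reads off $\mathfrak H_\Theta^{ij}=g^{ij}\mathrm D_x+b^{ij}_l\ri^l_x$ with the contravariant metric $(g^{ij})={\rm e}^{\ri^2-\ri^1}\mathop{\rm diag}\big({-}1,\,1,\,\Theta(\ri^3){\rm e}^{\ri^2-\ri^1}\big)$ and with the $b^{ij}_l$ supplied by the second matrix (it is here that the specific form of~$f^{33}$ is used). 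Formal skew-adjointness is a short direct check: $(g^{ij})$ is symmetric, being diagonal, and $b^{ij}_l\ri^l_x+b^{ji}_l\ri^l_x=\mathrm D_x g^{ij}$ holds entry by entry; equivalently, the connection~$\nabla$ with Christoffel symbols defined by $g^{il}\Gamma^j_{lk}=-b^{ij}_k$ is torsion-free and obeys $\nabla g=0$.

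Assume next that $\Theta$ is everywhere nonzero, so that $(g^{ij})$ is nondegenerate with inverse the metric $g=\mathop{\rm diag}\big({-}{\rm e}^{\ri^1-\ri^2},\,{\rm e}^{\ri^1-\ri^2},\,\Theta^{-1}{\rm e}^{2(\ri^1-\ri^2)}\big)$. By the criterion recalled above, $\mathfrak H_\Theta$ is then a Hamiltonian operator provided $g$ is flat (the connection $\nabla$ being already identified with the Levi-Civita connection of~$g$ through skew-adjointness and torsion-freeness). I would make flatness transparent by passing to the coordinates $\xi=\ri^1+\ri^2$, $\zeta={\rm e}^{\ri^1-\ri^2}$, $\tau$ with $\mathrm d\tau=|\Theta(\ri^3)|^{-1/2}\mathrm d\ri^3$: in these coordinates the metric takes the form ${-}\mathrm d\xi\,\mathrm d\zeta+\zeta^2\mathrm d\tau^2$, whose curvature tensor vanishes by a short computation. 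Alternatively one may bypass the geometry altogether and verify $[\![\mathfrak H_\Theta,\mathfrak H_\Theta]\!]=0$ directly on the cotangent covering via the Euler-operator formula displayed above.

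For the compatibility of the family it suffices, by bilinearity and symmetry of the variational Schouten bracket, to establish $[\![\mathfrak H_{\Theta_1},\mathfrak H_{\Theta_2}]\!]=0$ for all $\Theta_1,\Theta_2$. When both are nowhere vanishing I would invoke~\eqref{eq:IDFMCompatibilityCondition}: the affinor $s^i_j=\tilde g^{il}g_{lj}$ assembled from the metrics $g,\tilde g$ of $\mathfrak H_{\Theta_1},\mathfrak H_{\Theta_2}$ is diagonal, $s=\mathop{\rm diag}\big(1,\,1,\,\Theta_2(\ri^3)/\Theta_1(\ri^3)\big)$, and its Nijenhuis tensor vanishes identically since its only non-constant eigenvalue depends on $\ri^3$ alone, i.e.\ on the coordinate spanning the corresponding eigendirection. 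To remove the nonvanishing hypothesis, observe that each component of $[\![\mathfrak H_{\Theta_1},\mathfrak H_{\Theta_2}]\!]$, evaluated at a jet point with $\ri^3=r_0$, is a polynomial in finitely many of the derivatives $\Theta_1^{(k)}(r_0)$, $\Theta_2^{(k)}(r_0)$ (with coefficients built from ${\rm e}^{\pm(\ri^1-\ri^2)}$ and the jet coordinates). Any such jet with $\Theta_a(r_0)\ne0$ is realized by an everywhere-nonzero function---for instance a constant times the exponential of a polynomial---so these polynomials vanish on a dense set, hence identically; thus $[\![\mathfrak H_{\Theta_1},\mathfrak H_{\Theta_2}]\!]=0$ for all smooth $\Theta_1,\Theta_2$, including $\Theta_1=\Theta_2$ and including functions with zeros. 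This yields at once that each $\mathfrak H_\Theta$ is a Hamiltonian operator and that the whole family is compatible.

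It remains to confirm that the system~\eqref{eq:IDFMDiagonalizedSystem} is Hamiltonian with respect to $\mathfrak H_\Theta$ and $\mathcal H_{c_0,\Xi}$. Because the density~\eqref{eq:IDFMHamiltonian} contains no derivatives of~$\ri$, one has $\delta\mathcal H_{c_0,\Xi}=\big(\p_{\ri^1}H_{c_0,\Xi},\,\p_{\ri^2}H_{c_0,\Xi},\,\p_{\ri^3}H_{c_0,\Xi}\big)$; substituting this into $\mathfrak H_\Theta$ and simplifying, the first two components reproduce $-V^1\ri^1_x$ and $-V^2\ri^2_x$ identically in view of~\eqref{eq:IDFMCharacteristicSpeeds}, while the third component equals $-V^3\ri^3_x$ precisely when $|\Theta|^{1/2}\Xi_{\ri^3\ri^3}+\tfrac12\Theta_{\ri^3}\Xi_{\ri^3}=c_0$, the constant $c_0$ being the same as in the density. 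The only genuinely delicate point is the uniform treatment of~$\Theta$ with zeros, where $(g^{ij})$ degenerates and the geometric criteria no longer apply; everything else reduces to bounded, if occasionally lengthy, computations with the explicit coefficients.
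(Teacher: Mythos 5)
Your proposal covers the same three claims as the paper (Hamiltonianity of each $\mathfrak H_\Theta$, compatibility of the family, identification of the Hamiltonian) but reaches them by a partly different route. The paper first \emph{derives} the operator by writing the determining equations for Noether operators with the ansatz~\eqref{eq:AnsatzForNoetherOps}, solving them and imposing skew-adjointness; you take the operator as given and verify its properties, which suffices for the theorem as stated but forgoes the by-product that these exhaust the Noether operators of the assumed form. Your flatness argument via the coordinates $(\xi,\zeta,\tau)$ bringing the covariant metric to $-\mathrm d\xi\,\mathrm d\zeta+\zeta^2\mathrm d\tau^2$ is a clean replacement for the paper's direct check of $R^i_{\ jij}=0$ (and the claim is correct: with $P=\zeta$, $Q=\xi+\zeta\tau^2$, $R=\zeta\tau$ this form becomes $-\mathrm dP\,\mathrm dQ+\mathrm dR^2$, manifestly flat). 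Your density/polynomiality argument for passing from nowhere-vanishing $\Theta$'s to arbitrary smooth~$\Theta$'s is a genuine improvement in economy over the paper, which simply reports a symbolic verification of the Schouten brackets in the degenerate case; it is valid because the bracket components are differential polynomials in finite jets of $\Theta_1,\Theta_2$ and the jets realized by nowhere-vanishing functions form a dense subset.

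The one genuine gap is in the compatibility step for nonvanishing $\Theta_1,\Theta_2$, on which your extension to degenerate~$\Theta$ also rests. Vanishing of the Nijenhuis tensor of $s^i_j=\tilde g^{il}g_{lj}$ is a sufficient criterion for compatibility only when the eigenvalues of $(s^i_j)$ are distinct; here $(s^i_j)=\mathop{\rm diag}(1,1,\Theta_2/\Theta_1)$ has a repeated eigenvalue, so --- as the paper is careful to point out --- one must in addition verify the conditions~\eqref{eq:IDFMCompatibilityCondition} themselves. You name~\eqref{eq:IDFMCompatibilityCondition} but your justification rests only on the Nijenhuis tensor; to close the argument you must actually check~\eqref{eq:IDFMCompatibilityCondition} for the pair $g,\tilde g$ (a short computation, and it does hold). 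A secondary remark: the assertion that the first two components of $\mathfrak H_\Theta\,\delta\mathcal H_{c_0,\Xi}$ reproduce the right-hand sides identically while only the third yields the constraint on~$\Xi$ is a nontrivial computational claim that has to be carried out explicitly --- the paper does the equivalent work by splitting the overdetermined system~\eqref{eq:IDFMHamiltonianSystem} with respect to $\ri^1_x$, $\ri^2_x$ and $\ri^3_x$ --- so as written this step is a promissory note rather than a proof.
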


\begin{proof}
For the first step of the algorithm expounded above we need to consider the joint system of equations
\begin{subequations}\label{eq:IDFMSystem}
\begin{gather}
\ri^k_t+V^k\ri^k_x=0,\label{eq:IDFMRiemannianSystem}\\
\mathrm D_t\eta^k+V^k\mathrm D_x\eta^k+(\eta^1+\eta^2)\ri^k_x=0,\label{eq:IDFMLinearizedSystem}\\
\mathrm D_t\lambda^k+\mathrm D_x(V^k\lambda^k)-\ri^l_x\lambda^l(\delta^1_k+\delta^2_k)=0,\label{eq:IDFMAdjointSystem}
\end{gather}
\end{subequations}
where the system~\eqref{eq:IDFMRiemannianSystem} is the system~$\mathcal S$ itself,
the system~\eqref{eq:IDFMLinearizedSystem} is the linearization of~$\mathcal S$,
and the system~\eqref{eq:IDFMAdjointSystem} is adjoint to~\eqref{eq:IDFMLinearizedSystem}.
If $\eta^i$ and $\lambda^j$ are differential functions of~$\ri$,
then the tuples $\eta=(\eta^1,\eta^2,\eta^3)^{\mathsf T}$ and $\lambda=(\lambda^1,\lambda^2,\lambda^3)$ are
a symmetry characteristic and a cosymmetry of the system~$\mathcal S$, respectively.
Here and in what follows a summation with respect to~$i$, $j$ and~$l$,
which run through the set $\{1,2,3\}$, is assumed,
there is no summation with respect to~$k$, which is a fixed number from the set $\{1,2,3\}$,
and $\delta^i_j$ stands for the Kronecker delta.
We are looking for a Noether operator~\smash{$\mathfrak N=(\mathfrak{N}^{ij})$} with entries of the form
\begin{gather}\label{eq:AnsatzForNoetherOps}
\mathfrak N^{ij}=h^{ij}(\ri)\mathrm D_x+f^{ij}(\ri,\ri_x)
\end{gather}
for some smooth functions~$h^{ij}$ and~$f^{ij}$ of their arguments.
By definition of Noether operators, for any solution $(\ri,\lambda)$ of~\eqref{eq:IDFMRiemannianSystem}, \eqref{eq:IDFMAdjointSystem}
the expressions $\eta^i:=\mathfrak N^{ij}\lambda^j=h^{ij}\mathrm D_x\lambda^j+f^{ij}\lambda^j$
give a solution of~\eqref{eq:IDFMLinearizedSystem}.
This implies the system
\begin{gather*}
-h^{kj}_{\ri^l}V^l\ri^l_x\mathrm D_x\lambda^j+h^{kj}\mathrm D_x\left(-\mathrm D_x(V^j\lambda^j)+
\ri^l_x\lambda^l\left(\delta^1_j+\delta^2_j\right)\right)-
f^{kj}_{\ri^l_x}\left(V^l\ri^l_{xx}+(\ri^1_x+\ri^2_x)\ri^l_x\right)\lambda^j\\\qquad{}
-f^{kj}_{\ri^l}V^l\ri^l_x\lambda^j+f^{kj}\left(-\mathrm D_x(V^j\lambda^j)+
\ri^l_x\lambda^l\left(\delta^1_j+\delta^2_j\right)\right)+
V^k\big(h^{kj}_{\ri^l}\ri^l_x\mathrm D_x\lambda^j+h^{kj}\mathrm D^2_x\lambda^j\\\qquad{}
+f^{kj}_{\ri^l}\ri^l_x\lambda^j+f^{kj}_{\ri^l_x}\ri^l_{xx}\lambda^j+f^{kj}\mathrm D_x\lambda^j\big)
+\ri^k_x\left(h^{1j}\mathrm D_x\lambda^j+f^{1j}\lambda^j+h^{2j}\mathrm D_x\lambda^j+f^{2j}\lambda^j\right)=0.
\end{gather*}
Collecting coefficients of~$\mathrm D^2_x\lambda^j$ immediately leads to~$h^{kj}=0$
for all $j\ne k$, and further collecting coefficients of~$\mathrm D_x\lambda^j$ yields
\begin{gather*}
f^{21}=-f^{12}=\frac12\left(h^{11}\ri^2_x+h^{22}\ri^1_x\right),\quad f^{31}=
-f^{13}=h^{11}\ri^3_x,\quad f^{23}=-f^{32}=h^{22}\ri^3_x,\\
h^{11}_{\ri^3}=0,\quad h^{11}_{\ri^2}=h^{11},\quad h^{22}_{\ri^3}=0,\quad h^{22}_{\ri^1}=
-h^{22},\quad h^{33}_{\ri^1}=-2h^{33},\quad h^{33}_{\ri^2}=2h^{33}.
\end{gather*}
Finally, splitting with respect to~$\lambda^j$ and derivatives of~$\ri^l$ allows us to deduce
that the operator~$\mathfrak N$ is of the form~\eqref{eq:IDFMHamiltonianOperator} with
\[
f^{33}=\Theta(\ri^3){\rm e}^{2\ri^2-2\ri^1}(\ri^2_x-\ri^1_x)+\Psi(\ri^3,\ri^3_x{\rm e}^{\ri^2-\ri^1}){\rm e}^{\ri^2-\ri^1},
\]
where~$\Theta$ and~$\Psi$ are arbitrary smooth functions of their arguments. To be qualified as a Hamiltonian
operator, the operator~$\mathfrak N$ should be formally skew-adjoint, $\mathfrak N^\dag=-\mathfrak N$, yielding
the Noether operator~$\mathfrak H_\Theta$ of the form~\eqref{eq:IDFMHamiltonianOperator} with~$f^{33}$ as in the statement of the theorem.

First consider the case when~$\Theta$ is nonvanishing.
The operator~$\mathfrak H_\Theta$ is of hydrodynamic type with the pseudo-Riemannian metric
\begin{gather}\label{eq:IDFMMetric}
g=\mathop{\rm diag}\big( -{\rm e}^{\ri^2-\ri^1}\!,\, {\rm e}^{\ri^2-\ri^1}\!,\, \Theta(\ri^3) {\rm e}^{2\ri^2-2\ri^1}\big).
\end{gather}
It is easy to show that the coordinates~$\ri$ are Liouville ones, cf.~\cite{DubrovinNovikov1989}.
The connection~$\nabla$ associated with~$\mathfrak H_\Theta$ in the sense discussed above
is the Levi-Civita connection for~$g$.
Thus we should check that the corresponding Riemann curvature tensor vanishes.
Due to its symmetries, we only need to verify that $R^i_{\ jij}=0$ for~$i\ne j$.
This is easily computed to be true.
Thus the Noether operator~$\mathfrak H_\Theta$ is a Hamiltonian one.

Finally, we are looking for a Hamiltonian~$\mathcal H=\int H(\ri)\,\mathrm{d}x$ of~$\mathcal S$ with respect to $\mathfrak H_\Theta$. It satisfies the condition
\begin{gather*}
\mathfrak H_\Theta\frac{\delta\mathcal H}{\delta \ri}=
-\begin{pmatrix}
V^1\ri^1_x \\[.5ex] V^2\ri^2_x \\[.5ex] V^3\ri^3_x
\end{pmatrix},
\end{gather*}
where $\delta\mathcal H/\delta \ri$ is the vector of variational derivatives of~$\mathcal H$ with respect to the Riemann
invariants~$\ri^1$, $\ri^2$ and~$\ri^3$, $\delta\mathcal H/\delta \ri=(H_{\ri^1},H_{\ri^2},H_{\ri^3})^{\mathsf T}$
due to the fact that~$H$ is a function of~$\ri$ only. Expanding this condition we find the system of differential equations on~$H$,
\begin{subequations}\label{eq:IDFMHamiltonianSystem}
\begin{gather}
{\rm e}^{\ri^2-\ri^1}\left(-\mathrm{D}_xH_{\ri^1}+\frac{\ri^2_x-\ri^1_x}{2}(H_{\ri^2}-H_{\ri^1})+\ri^3_xH_{\ri^3}\right)=-2V^1\ri^1_x,\label{eq:IDFMHamiltonianSystemEq1}\\
{\rm e}^{\ri^2-\ri^1}\left(\mathrm{D}_xH_{\ri^2}+\frac{\ri^2_x-\ri^1_x}{2}(H_{\ri^2}-H_{\ri^1})+\ri^3_xH_{\ri^3}\right)=-2V^2\ri^2_x,\label{eq:IDFMHamiltonianSystemEq2}\\
\begin{split}\label{eq:IDFMHamiltonianSystemEq3}
&{\rm e}^{2\ri^2-2\ri^1}\left(-\ri^3_x(H_{\ri^1}+H_{\ri^2}){\rm e}^{\ri^1-\ri^2}+\mathrm{D}_x(H_{\ri^3})\Theta+ \left((\ri^2_x-\ri^1_x)\Theta+\ri^3_x\Theta_{\ri^3}\right)H_{\ri^3}\right)=-2V^3\ri^3_x.
\end{split}
\end{gather}
\end{subequations}
Successively splitting with respect to~$\ri^1_x$, $\ri^2_x$ and~$\ri^3_x$
and solving the obtained overdetermined system of differential equations,
we find the final form~\eqref{eq:IDFMHamiltonian} for Hamiltonian densities
and the auxiliary condition on~$\Xi$.

For the system~\eqref{eq:IDFMDiagonalizedSystem}
the tensor~\smash{$(s^i_j)$} takes a particularly simple form, \smash{$(s^i_j)=\mathop{\rm diag}(1,1,\tilde{\Theta}/\Theta)$}, where~$\Theta$ and~\smash{$\tilde\Theta$}
are functions of~$\ri^3$ parameterizing the metrics~$g$ and~$\tilde g$.
It is trivial to verify that its Nijenhuis tensor vanishes. Since eigenvalues of~$(s^i_j)$ are not distinct,
we need also to verify the conditions~\eqref{eq:IDFMCompatibilityCondition}, and they also hold.

If $\Theta$ is a somewhere vanishing function, then the geometric reasoning for Hamiltonian operators is no longer available,
and we should proceed by establishing that the corresponding variational Schouten brackets vanish, which is done symbolically.
\end{proof}

\begin{remark}
It is worth noting that provided~$\Xi_{\ri^3}\neq0$ the condition on~$\Xi$ can be equivalently represented as
\begin{gather*}
\Theta=\frac{c_0\Xi+c_1}{\Xi_{\ri^3}^{\ 2}},
\end{gather*}
where $c_1$ is an arbitrary constant.
\end{remark}

For preliminary computations and testing the above results,
we used the package {\sf Jets} \cite{{BaranMarvan},Marvan2009} for {\sf maple}.

Below we consider only canonical representatives of symmetry-type objects,
where derivatives involving differentiations with respect to~$t$ are
replaced by their expressions in view of the system~$\mathcal S$,
which is necessary for relating different kinds of such objects via Hamiltonian structures.

For any Hamiltonian operator~$\mathfrak H_\Theta$ from Theorem~\ref{theorem:IDFMHamiltonianStructures},
we can endow the space~$\hat\Upsilon^{\rm q}$ of canonical representatives for cosymmetries of~$\mathcal S$
with a Lie-algebra structure, cf.\ \cite{Fuchssteiner1982b} and \cite[Section~3.1]{Blaszak1998a},
where the corresponding Lie bracket is defined~by
\[
[\gamma^1,\gamma^2]_{\mathfrak H_\Theta}
=\ell_{\gamma^2}\mathfrak H_\Theta\gamma^1
+\ell_{\mathfrak H_\Theta\gamma^1}^\dag\gamma^2
+(\ell_{\gamma^1}-\ell_{\gamma^1}^\dag)\mathfrak H_\Theta\gamma^2
\]
for any~$\gamma^1,\gamma^2\in\hat\Upsilon^{\rm q}$.
Here $\ell_\gamma$ and \smash{$\ell_\gamma^\dag$}
are the universal linearization operator of~$\gamma\in\hat\Upsilon^{\rm q}$ and its formal adjoint, respectively.
Denote the Lie algebra with the underlying space~$\hat\Upsilon^{\rm q}$
and the Lie bracket $[\cdot,\cdot]_{\mathfrak H_\Theta}$ by~\smash{$\hat\Upsilon^{\rm q}_\Theta$}.
The operator~$\mathfrak H_\Theta$ establishes
a homomorphism from the Lie algebra~\smash{$\hat\Upsilon^{\rm q}_\Theta$} to the Lie algebra~$\hat\Sigma^{\rm q}$.
The image \smash{$\mathfrak H_\Theta\hat\Upsilon^{\rm q}_\Theta$} of this homomorphism is a proper subalgebra of~$\hat\Sigma^{\rm q}$
of canonical representatives for generalized symmetries of the system~$\mathcal S$.
More specifically, the image \smash{$\mathfrak H_\Theta\hat\Upsilon^{\rm q}_\Theta$}
is spanned by generalized symmetries from three families
that are the images of the respective families from Theorem~\ref{thm:IDFM:Cosyms}
and whose elements are, in the notation of Theorems~\ref{thm:IDFMGenSyms} and~\ref{thm:IDFM:Cosyms},
of the following form:
\begin{enumerate}
\item
$\check{\mathcal W}(\bar\Omega^\Theta)$,
where $\bar\Omega^\Theta=\hat{\mathscr A}\big((\hat{\mathscr A}\Omega)\Theta/\omega^1\big)$,
\item
$\check{\mathcal P}(\bar\Phi)$,
where $\bar\Phi=\Phi_{\ri^1}-\frac12\Phi$,
and thus the parameter function~$\bar\Phi=\bar\Phi(\ri^1,\ri^2)$
runs through the solution space of the Klein--Gordon equation $\bar\Phi_{\ri^1\ri^2}=-\bar\Phi/4$ as well,
\item
$\check{\mathcal R}(\bar\Gamma)$, where $\bar\Gamma=\frac12(\tilde{\mathscr D}_y-1)\tilde{\mathfrak Q}\tilde q$.
\end{enumerate}
For the nonvanishing function~$\Theta$, the kernel of the above homomorphism is two-dimensional and spanned by the cosymmetries
${\rm e}^{\ri^1-\ri^2}(1,-1,0)$ and ${\rm e}^{\ri^1-\ri^2}(\bar\Theta,-\bar\Theta,\bar\Theta_{\ri^3})$
with an antiderivative~$\bar\Theta$ of~$1/\Theta$, $\bar\Theta_{\ri^3}=1/\Theta$.
The former cosymmetry is special due to
being a single (up to linear independence) common element of the first and the second families from Theorem~\ref{thm:IDFM:Cosyms},
see Remark~\ref{rem:IDFM:IntersectionOfCosymSubspaces}.
Both the cosymmetries are conservation-law characteristics
and are associated with the conserved currents
${\rm e}^{\ri^1-\ri^2}(1,\ri^1+\ri^2)$ and ${\rm e}^{\ri^1-\ri^2}\big(\bar\Theta,(\ri^1+\ri^2)\bar\Theta\big)$,
which belong to the first family of Theorem~\ref{thm:IDFM:CLs}.
As a result, the space of distinguished (Casimir) functionals of the Hamiltonian operator~$\mathfrak H_\Theta$
is spanned by two functionals,
\[
\mathcal C_1:=\int {\rm e}^{\ri^1-\ri^2}\,{\rm d}x,\quad
\mathcal C_2^\Theta:=\int {\rm e}^{\ri^1-\ri^2}\bar\Theta(\ri^3)\,{\rm d}x.
\]
In the degenerate case with $\Theta\equiv0$,
the kernel of the above homomorphism is infinite-dimensional
and coincides with the first family of Theorem~\ref{thm:IDFM:Cosyms}.
Elements of this family are conservation-law characteristics if and only if
they belong to the first family of Theorem~\ref{thm:IDFM:CLChars}
and are thus associated with conserved currents from the first family of Theorem~\ref{thm:IDFM:CLs}.
This means that the space of distinguished (Casimir) functionals of the Hamiltonian operator~$\mathfrak H_0$
consists of the functionals
\[\int {\rm e}^{\ri^1-\ri^2}\Omega(\omega^0,\omega^1,\dots)\,{\rm d}x,\]
where the parameter function~$\Omega$ runs through the space of smooth functions
of a finite, but unspecified number of \smash{$\omega^\kappa=({\rm e}^{\ri^2-\ri^1}\mathscr D_x)^\kappa\ri^3$}, $\kappa\in\mathbb N_0$.

Consider the constraints that single out
the space of canonical representatives conservation-law characteristics of~$\mathcal S$,
which is described in Theorem~\ref{thm:IDFM:CLChars},
from the space~$\hat\Upsilon^{\rm q}$ of canonical representatives of cosymmetries of~$\mathcal S$.
Imposing these constraints on $\Omega$ and~$\tilde{\mathfrak Q}$
that parameterize families spanning $\mathfrak H_\Theta\hat\Upsilon^{\rm q}$,
we single out the algebra of Hamiltonian symmetries of~$\mathcal S$
associated with the Hamiltonian operator~$\mathfrak H_\Theta$.

\begin{theorem}\label{thm:IDFHamiltonianSyms}
Given a smooth function~$\Theta$ of~$\omega^0:=\ri^3$,
the algebra of Hamiltonian symmetries of the system~\eqref{eq:IDFMDiagonalizedSystem}
for the Hamiltonian operator~$\mathfrak H_\Theta$ is spanned by the generalized vector fields
\begin{gather*}
\check{\mathcal W}(\bar\Omega^\Theta)=\bar\Omega^\Theta\p_{\ri^3},
\quad
\check{\mathcal P}(\Phi)={\rm e}^{(\ri^2-\ri^1)/2}\left(
(\Phi+2\Phi_{\ri^1})\ri^1_x\p_{\ri^1}
+(\Phi-2\Phi_{\ri^2})\ri^2_x\p_{\ri^2}
+2\Phi\ri^3_x\p_{\ri^3}\right),
\\
\check{\mathcal R}(\bar\Gamma)={\rm e}^{(\ri^2-\ri^1)/2}\left(
 (\tilde{\mathscr D}_y\bar\Gamma+\bar\Gamma)\ri^1_x\p_{\ri^1}
+(\tilde{\mathscr D}_z\bar\Gamma+\bar\Gamma)\ri^2_x\p_{\ri^2}
+2\bar\Gamma\ri^3_x\p_{\ri^3}\right),
\end{gather*}
where
$\bar\Omega^\Theta=\hat{\mathscr A}\big(\Theta\sum_{\kappa=0}^\infty(-\hat{\mathscr A})^\kappa\Omega_{\omega^\kappa}\big)$
with the operator $\hat{\mathscr A}=\sum_{\kappa=0}^{\infty}\omega^{\kappa+1}\p_{\omega^\kappa}$ and
with 
$\Omega$ running through the space of smooth functions
of a finite, but unspecified number of \smash{$\omega^\kappa=({\rm e}^{\ri^2-\ri^1}\mathscr D_x)^\kappa\ri^3$}, $\kappa\in\mathbb N_0$,
the parameter function~$\Phi=\Phi(\ri^1,\ri^2)$
runs through the solution space of the Klein--Gordon equation $\Phi_{\ri^1\ri^2}=-\Phi/4$,
and $\bar\Gamma=\frac12(\tilde{\mathscr D}_y-1)\tilde{\mathfrak Q}\tilde q$
with the operator~$\tilde{\mathfrak Q}$ running through the~set
\begin{gather*}
\big\{\tilde{\mathscr J}^{\kappa'}\!,\,\kappa'\in2\mathbb N_0+1,\
(\tilde{\mathscr J}+\iota/2)^\kappa\tilde{\mathscr D}_y^\iota,\,
(\tilde{\mathscr J}-\iota/2)^\kappa\tilde{\mathscr D}_z^\iota,\,\kappa\in\mathbb N_0,\,\iota\in\mathbb N,\,\kappa+\iota\in2\mathbb N_0+1\big\}.
\\
\hspace*{-\mathindent}\mbox{Here}
\\[-1ex]
\tilde{\mathscr D}_y:=-\frac1{\ri^1_x}\big(\mathscr D_t+(\ri^1+\ri^2-1)\mathscr D_x\big),\quad
\tilde{\mathscr D}_z:=-\frac1{\ri^2_x}\big(\mathscr D_t+(\ri^1+\ri^2+1)\mathscr D_x\big),
\\
\tilde{\mathscr J}:=\frac{\ri^1}2\tilde{\mathscr D}_y+\frac{\ri^2}2\tilde{\mathscr D}_z,\quad
\tilde q:={\rm e}^{(\ri^1-\ri^2)/2}\big(x-(\ri^1+\ri^2+1)t\big).
\end{gather*}
\end{theorem}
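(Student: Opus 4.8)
The plan is to obtain the algebra of Hamiltonian symmetries as the image $\mathfrak H_\Theta\hat\Upsilon^{\rm q}_\Theta$ of the subspace of cosymmetries that are actually conservation-law characteristics, which is exactly what the discussion preceding the theorem sets up. So the proof decomposes into three pieces, one for each family of cosymmetries in Theorem~\ref{thm:IDFM:Cosyms}, together with a bookkeeping argument that the union of the three images is closed under the Lie bracket $[\cdot,\cdot]_{\mathfrak H_\Theta}$ pushed forward by $\mathfrak H_\Theta$. For the second family this is immediate: by Remark~\ref{rem:IDFM:CorrespondenceBetweenCosymsAndCLChars} every cosymmetry of the form ${\rm e}^{(\ri^1-\ri^2)/2}(-2\Phi_{\ri^1},\Phi,0)$ is already a conservation-law characteristic, so applying $\mathfrak H_\Theta$ and using the already-computed image (item~2 in the discussion) yields exactly $\check{\mathcal P}(\bar\Phi)$ with $\bar\Phi=\Phi_{\ri^1}-\tfrac12\Phi$; since $\Phi\mapsto\bar\Phi$ is a bijection on the solution space of $\Phi_{\ri^1\ri^2}=-\Phi/4$ (its inverse being integration against the potential defined in Section~\ref{sec:IDFMSolutionsViaEssentialSubsystem}), the parameter may as well be renamed $\Phi$, giving the stated form $\check{\mathcal P}(\Phi)$.

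For the third family, I would invoke the second half of Remark~\ref{rem:IDFM:CorrespondenceBetweenCosymsAndCLChars}: the conservation-law characteristics sitting inside the third cosymmetry family are precisely those with $\tilde{\mathfrak Q}$ running through the smaller index set appearing in the third family of Theorem~\ref{thm:IDFM:CLs} (the parity-restricted set $\{\tilde{\mathscr J}^{\kappa'},\,\kappa'\in2\mathbb N_0+1,\ (\tilde{\mathscr J}\pm\iota/2)^\kappa\tilde{\mathscr D}_{y/z}^\iota,\,\kappa+\iota\in2\mathbb N_0+1\}$), which are exactly the operators associated with the formally skew-adjoint part $\tilde\Lambda^{\rm q}_-$ on the Klein--Gordon side. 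Applying $\mathfrak H_\Theta$ and using item~3 of the pre-theorem discussion gives $\check{\mathcal R}(\bar\Gamma)$ with $\bar\Gamma=\tfrac12(\tilde{\mathscr D}_y-1)\tilde{\mathfrak Q}\tilde q$ for $\tilde{\mathfrak Q}$ in that set, which is the claimed third family. For the first family one restricts to those $\Omega$-cosymmetries that are conservation-law characteristics; applying $\mathfrak H_\Theta$ and tracking how the $\ri^3$-component ${\rm e}^{\ri^1-\ri^2}(\hat{\mathscr A}\Omega)/\omega^1$ is processed by the $(3,3)$-entry of $\mathfrak H_\Theta$ (which contributes $\hat{\mathscr A}\circ\big(\Theta\,\cdot\big)$ up to the standard "integration-by-parts" rearrangement of the first-family characteristic formula from Theorem~\ref{thm:IDFM:CLChars}) yields $\check{\mathcal W}(\bar\Omega^\Theta)$ with $\bar\Omega^\Theta=\hat{\mathscr A}\big(\Theta\sum_{\kappa=0}^\infty(-\hat{\mathscr A})^\kappa\Omega_{\omega^\kappa}\big)$, as stated; here I would simply let $\Omega$ range over all smooth functions of finitely many $\omega$'s since the map $\Omega\mapsto\sum_\kappa(-\hat{\mathscr A})^\kappa\Omega_{\omega^\kappa}$ is exactly the first component $\mathsf E'$ appearing in Remark~\ref{rem:IDFM:ImageOfA}, so the characteristic-vs-cosymmetry distinction is absorbed into the parameterization.

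Finally I would verify closedness under the bracket. Since $\mathfrak H_\Theta$ is a Hamiltonian operator, the pushforward of $[\cdot,\cdot]_{\mathfrak H_\Theta}$ along $\mathfrak H_\Theta$ agrees with the commutator of generalized vector fields on the image, so it suffices to note that the three families $\{\check{\mathcal W}(\bar\Omega^\Theta)\}$, $\{\check{\mathcal P}(\Phi)\}$, $\{\check{\mathcal R}(\bar\Gamma)\}$ together span a subalgebra of $\hat\Sigma^{\rm q}$. This follows from the commutation relations recorded in Remark~\ref{rem:IDFM:indeterminacy} (the $\check{\mathcal P}$'s form a commutative ideal, the $\check{\mathcal W}$'s close among themselves, $[\check{\mathcal W},\check{\mathcal P}]=0$) once one checks that brackets of two $\check{\mathcal R}$'s and brackets of $\check{\mathcal R}$ with $\check{\mathcal P}$ or $\check{\mathcal W}$ land back in the span; this reduces, via the isomorphism of Section~\ref{sec:IDFMGenSyms} with generalized symmetries of the Klein--Gordon equation, to the closedness of the Noether subalgebra $\tilde\Lambda^{\rm q}_-$ (the skew-adjoint operators generated by $\mathrm D_y,\mathrm D_z,\mathrm J$) established in \cite{OpanasenkoPopovych2018}. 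The main obstacle I anticipate is the first family: correctly matching the nonlocal-looking combination $\bar\Omega^\Theta=\hat{\mathscr A}\big(\Theta\sum_\kappa(-\hat{\mathscr A})^\kappa\Omega_{\omega^\kappa}\big)$ requires carefully threading the $(3,3)$-entry of $\mathfrak H_\Theta$ through the "integration-by-parts" normal form for first-family characteristics from Theorem~\ref{thm:IDFM:CLChars}, keeping the operator identities $\hat{\mathscr A}\mathsf E=-\omega^1\mathsf E'$ from Remark~\ref{rem:IDFM:ImageOfA} straight, and confirming that no additional constraint on $\Omega$ survives beyond smoothness in finitely many $\omega$'s.
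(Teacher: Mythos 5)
Your proposal follows the paper's own route: the paper obtains Theorem~\ref{thm:IDFHamiltonianSyms} precisely by restricting the three cosymmetry families of Theorem~\ref{thm:IDFM:Cosyms} to the conservation-law characteristics described in Theorem~\ref{thm:IDFM:CLChars} and reading off their images under $\mathfrak H_\Theta$ from the preceding computation of $\mathfrak H_\Theta\hat\Upsilon^{\rm q}_\Theta$, exactly as you do family by family. Your additional closure-under-bracket verification is not spelled out in the paper but is subsumed by the stated homomorphism property of $\mathfrak H_\Theta$ together with the fact that conservation-law characteristics are closed under the bracket $[\cdot,\cdot]_{\mathfrak H_\Theta}$, so it introduces no divergence from the paper's argument.
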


\section{Recursion operators}\label{sec:IDFMRecursionOperator}

Some semi-Hamiltonian hydrodynamic-type systems admit Teshukov's recursion operators~\cite{Teshukov1989}
which are specific first-order differential operators without pseudo-differential part.
According to~\cite{Sheftel1994a}, such recursion operators exist
if the Darboux rotation coefficients for an associated metric~$g$,
$\beta_{ik}:=\p_{\ri^i}(\sqrt{|g_{kk}|})/\sqrt{|g_{ii}|}$ for $i\neq k$ and $\beta_{kk}:=0$,
depend at most on pairwise differences of Riemann invariants.
For the system~$\mathcal S$, this condition is satisfied
by the metric~$g$ of the form~\eqref{eq:IDFMMetric} with constant~$\Theta$.
A canonical Teshukov's recursion operator for the system~$\mathcal S$ and such a metric is easily computed,
cf.~\cite[Eq.~(8.1)]{Tsarev1991},
\[
\mathfrak R_{\rm T}=\mathrm D_x\circ\mathop{\rm diag}\left(\dfrac1{\ri^1_x},\dfrac1{\ri^2_x},\dfrac1{\ri^3_x}\right)+
\begin{pmatrix}
\dfrac{\ri^1_x-\ri^2_x}{2\ri^1_x} & \dfrac{\ri^2_x-\ri^1_x}{2\ri^2_x} & 0\\[2.3ex]
\dfrac{\ri^2_x-\ri^1_x}{2\ri^1_x} & \dfrac{\ri^1_x-\ri^2_x}{2\ri^2_x} & 0\\[2.3ex]
\dfrac{\ri^3_x-\ri^1_x}{ \ri^1_x} & \dfrac{\ri^2_x-\ri^3_x}{ \ri^2_x} & \dfrac{\ri^1_x-\ri^2_x}{\ri^3_x}
\end{pmatrix}.
\]
The operator~$\mathfrak R_{\rm T}$ acts
on the generalized vector fields spanning the algebra~$\hat\Sigma^{\rm q}$ as follows
\begin{gather*}
\check{\mathcal D}\mapsto-2\check{\mathcal G}_1-\check{\mathcal G}_2+\check{\mathcal W}(1),\quad
\check{\mathcal R}(\Gamma)\mapsto\frac12\check{\mathcal R}\big(\tilde{\mathscr D}_y\Gamma-\tilde{\mathscr D}_z\Gamma\big),\quad
\check{\mathcal P}(\Phi)\mapsto\check{\mathcal P}\big(\Phi_{\ri^1}+\Phi_{\ri^2}\big),\\
\check{\mathcal W}(\Omega)\mapsto\check{\mathcal W}\left(\mathscr A(\Omega/\omega^1)\right).
\end{gather*}

At the same time, we can construct many more local recursion operators, including higher-order ones.
For this purpose, we use the complete description of generalized symmetries of the system~$\mathcal S$
that is presented in Theorem~\ref{thm:IDFMGenSyms}.
Here the basic fact is again that the algebra~$\hat\Sigma^{\rm q}$
is decomposed into a (non-direct) sum of its subalgebras~$\bar\Sigma^{\rm q}_{12}$ and~$\hat\Sigma^{\rm q}_3$.
The subalgebra~$\bar\Sigma^{\rm q}_{12}$ is a counterpart of the algebra of generalized symmetries
of the (1+1)-dimensional Klein--Gordon equation~\eqref{eq:IDFMSupersystemA},
and thus the recursion operators preserving~$\bar\Sigma^{\rm q}_{12}$
are related to recursion operators of this equation.
The ideal~$\hat\Sigma^{\rm q}_3$ underlaid by the degeneracy of the system~$\mathcal S$
is preserved by the operators of the form
$\mathop{\rm diag}(0,0,\Omega\mathscr A^\kappa)$,
where the coefficient~$\Omega$ is a smooth function
of a finite but unspecified number of $\omega^\iota=\mathscr A^\iota\ri^3$, $\iota\in\mathbb N_0$,
and $\kappa\in\mathbb N_0$.
The above gives a hint about the form of more local recursion operators for the system~$\mathcal S$.

\begin{theorem}\label{thm:IDFMrecursionOps}
The system~\eqref{eq:IDFMDiagonalizedSystem} admits recursion operators of the form
\begin{gather*}
\mathfrak R_{1,\mathfrak Q}={\rm e}^{(\ri^2-\ri^1)/2}
\begin{pmatrix}
\ri^1_x(\tilde{\mathscr D}_y+1)&0&0\\
\ri^2_x(\tilde{\mathscr D}_z+1)&0&0\\
2\ri^3_x&0&0\\
\end{pmatrix}
\mathfrak Q\circ\frac{{\rm e}^{(\ri^1-\ri^2)/2}}{\ri^1_x},
\\[1ex]
\mathfrak R_{2,\mathfrak Q}={\rm e}^{(\ri^2-\ri^1)/2}
\begin{pmatrix}
0&\ri^1_x(\tilde{\mathscr D}_y+1)&0\\
0&\ri^2_x(\tilde{\mathscr D}_z+1)&0\\
0&2\ri^3_x&0\\
\end{pmatrix}
\mathfrak Q\circ\frac{{\rm e}^{(\ri^1-\ri^2)/2}}{\ri^2_x},
\\[1ex]
\mathfrak R_{3,\mathfrak P}=
\mathfrak P
\begin{pmatrix}
0&0&0\\
0&0&0\\
-1&1&\mathscr A\circ(\omega^1)^{-1}\\
\end{pmatrix},
\end{gather*}
where $\mathfrak Q\in\langle
\tilde{\mathscr J}^\kappa,\,
\tilde{\mathscr D}_y^\iota\tilde{\mathscr J}^\kappa,\,
\tilde{\mathscr D}_z^\iota\tilde{\mathscr J}^\kappa,\,
\kappa\in\mathbb N_0,\,\iota\in\mathbb N\rangle$,
$\mathfrak P=\sum_{\kappa=0}^N\Omega^\kappa\mathscr A^\kappa$ for some $N\in\mathbb N_0$,
$\mathscr A:={\rm e}^{\ri^2-\ri^1}\mathscr D_x$,
the coefficients~$\Omega^\kappa$ are smooth functions
of a finite but unspecified number of $\omega^\iota=\mathscr A^\iota\ri^3$, $\iota\in\mathbb N_0$, and
\begin{gather*}
\tilde{\mathscr D}_y:=-\frac1{\ri^1_x}\big(\mathscr D_t+V^2\mathscr D_x\big),\quad 
\tilde{\mathscr D}_z:=-\frac1{\ri^2_x}\big(\mathscr D_t+V^1\mathscr D_x\big),\quad 
\tilde{\mathscr J}:=\frac{\ri^1}2\tilde{\mathscr D}_y+\frac{\ri^2}2\tilde{\mathscr D}_z.
\end{gather*}
\end{theorem}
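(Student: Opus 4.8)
The plan is to verify directly that each of the operators $\mathfrak R_{1,\mathfrak Q}$, $\mathfrak R_{2,\mathfrak Q}$ and $\mathfrak R_{3,\mathfrak P}$ maps the space $\hat\Sigma^{\rm q}$ of canonical representatives of generalized symmetries of~\eqref{eq:IDFMDiagonalizedSystem} into itself. Each is a matrix differential operator with coefficients that are differential functions of~$\ri$ (local on the stratum of $\mathcal S^{(\infty)}$ where the relevant one of $\ri^1_x$, $\ri^2_x$, $\ri^3_x$ is nonzero), so by linearity it suffices to apply it to every generalized vector field of the spanning set $\{\check{\mathcal W}(\Omega),\check{\mathcal P}(\Phi),\check{\mathcal D},\check{\mathcal R}(\Gamma)\}$ of $\hat\Sigma^{\rm q}$ supplied by Theorem~\ref{thm:IDFMGenSyms} and to check that the image again lies in $\hat\Sigma^{\rm q}$. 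The mechanism behind the three families is the splitting $\hat\Sigma^{\rm q}=\bar\Sigma^{\rm q}_{12}+\hat\Sigma^{\rm q}_3$: the operators $\mathfrak R_{1,\mathfrak Q}$, $\mathfrak R_{2,\mathfrak Q}$ should annihilate the degenerate ideal $\hat\Sigma^{\rm q}_3$ and act inside the Klein--Gordon counterpart, while $\mathfrak R_{3,\mathfrak P}$ does the reverse.

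For $\mathfrak R_{1,\mathfrak Q}$ (and $\mathfrak R_{2,\mathfrak Q}$ symmetrically, with $\ri^2$, $\tilde{\mathscr D}_z$ playing the roles of $\ri^1$, $\tilde{\mathscr D}_y$), applying it to $\sum_i\eta^i\p_{\ri^i}$ yields the generalized vector field ${\rm e}^{(\ri^2-\ri^1)/2}\big(\ri^1_x(\tilde{\mathscr D}_y+1)w\,\p_{\ri^1}+\ri^2_x(\tilde{\mathscr D}_z+1)w\,\p_{\ri^2}+2\ri^3_x w\,\p_{\ri^3}\big)$ with $w:=\mathfrak Q\big({\rm e}^{(\ri^1-\ri^2)/2}\eta^1/\ri^1_x\big)$, which is precisely the shape of the generalized vector field $-\tilde Q^{w,0}$ computed in the proof of Theorem~\ref{thm:IDFMGenSyms}. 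By Lemma~\ref{lem:IDFM:GenSymsOfSupersystem2B} and the pushforward procedure of that proof, this field lies in $\hat\Sigma^{\rm q}$ once $w$ is the pullback by~\eqref{eq:IDFMTransReducingToKGEq} of a generalized-symmetry characteristic of $q_{yz}=q$, i.e.\ $\tilde{\mathscr D}_y\tilde{\mathscr D}_z w=w$ on $\mathcal S^{(\infty)}$. To see the latter, note first that for any $\sum_i\eta^i\p_{\ri^i}\in\hat\Sigma^{\rm q}$ one has ${\rm e}^{(\ri^1-\ri^2)/2}\eta^1/\ri^1_x=-\tilde\chi$, where $\chi[q]\p_q$ is the ($s$-independent, by Lemma~\ref{lem:IDFM:GenSymsOfSupersystem1}) $q$-component of the matching element of $\hat{\mathfrak S}^{\rm q}$, so $\tilde\chi$ solves the pulled-back Klein--Gordon equation; second, $\tilde{\mathscr D}_y$, $\tilde{\mathscr D}_z$, $\tilde{\mathscr J}$ are the pullbacks of $\mathrm D_y$, $\mathrm D_z$, $\mathrm J=y\mathrm D_y-z\mathrm D_z$, each of which is a recursion operator of $q_{yz}=q$ (the first two trivially, $\mathrm J$ because $(\mathrm D_y\mathrm D_z-1)\mathrm J=\mathrm J(\mathrm D_y\mathrm D_z-1)$), so every $\mathfrak Q$ from the indicated span carries pulled-back Klein--Gordon characteristics to pulled-back Klein--Gordon characteristics. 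In particular $\mathfrak R_{1,\mathfrak Q}\check{\mathcal W}(\Omega)=0$ (there $\eta^1=0$), whereas on $\check{\mathcal D}$, $\check{\mathcal R}(\Gamma)$, $\check{\mathcal P}(\Phi)$ one obtains $w=\mathfrak Q\tilde q$, $w=\mathfrak Q(\tilde{\mathscr D}_y+1)\Gamma$ and $w=\mathfrak Q(\tilde{\mathscr D}_y+1)\tilde\zeta$ (with $\Phi$ reparameterized through a Klein--Gordon solution $\zeta$ as in Theorem~\ref{thm:IDFMGenSyms}), respectively, each again a pulled-back Klein--Gordon characteristic.

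For $\mathfrak R_{3,\mathfrak P}$, I would start with the case $\mathfrak P=1$, i.e.\ the bare $3\times3$ matrix operator: it sends $\sum_i\eta^i\p_{\ri^i}\in\hat\Sigma^{\rm q}$ to $u\,\p_{\ri^3}$ with $u:=-\eta^1+\eta^2+\mathscr A\big((\omega^1)^{-1}\eta^3\big)$. Using the determining equations~\eqref{eq:DetEqsForSigmaq} in the form $\mathscr B\eta^1=-\mathscr D_x\eta^1-\ri^1_x(\eta^1+\eta^2)$, $\mathscr B\eta^2=\mathscr D_x\eta^2-\ri^2_x(\eta^1+\eta^2)$, $\mathscr B\eta^3=-\ri^3_x(\eta^1+\eta^2)$, together with $\mathscr B\ri^1=-\ri^1_x$, $\mathscr B\ri^2=\ri^2_x$, $\mathscr B\ri^3=0$ and $\mathscr A\mathscr B=\mathscr B\mathscr A$, a short computation gives $\mathscr B\big((\omega^1)^{-1}\eta^3\big)=-{\rm e}^{\ri^1-\ri^2}(\eta^1+\eta^2)$, whence $\mathscr B\mathscr A\big((\omega^1)^{-1}\eta^3\big)=-\mathscr D_x(\eta^1+\eta^2)-(\ri^1_x-\ri^2_x)(\eta^1+\eta^2)=-\mathscr B(\eta^2-\eta^1)$, so that $\mathscr Bu=0$. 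By Lemma~\ref{lem:IDFM:DiffFunctionsOfOmega}, $u$ is then a smooth function of finitely many $\omega$'s and $\check{\mathcal W}(u)\in\hat\Sigma^{\rm q}_3$. Finally, since $\mathscr A$ acts on such functions as $\hat{\mathscr A}$ and multiplication by any $\Omega^\kappa$ preserves this class, $\mathfrak P u=\sum_{\kappa=0}^N\Omega^\kappa\mathscr A^\kappa u$ is again a smooth function of finitely many $\omega$'s, so $\mathfrak R_{3,\mathfrak P}\big(\sum_i\eta^i\p_{\ri^i}\big)=\check{\mathcal W}(\mathfrak P u)\in\hat\Sigma^{\rm q}_3\subset\hat\Sigma^{\rm q}$.

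The main obstacle I anticipate lies in the first two families rather than in the computation $\mathscr Bu=0$: one must track the identification of the output with the $\check{\mathcal R}$- and $\check{\mathcal P}$-shaped families through the already intricate pushforward used to prove Theorem~\ref{thm:IDFMGenSyms}, and ensure that the $\mathfrak Q$-images of elements of the $\hat\Lambda^{\rm q}$- and $\hat{\mathfrak K}^{-\infty}$-parts stay within the span of the listed generators, which rests on the closedness of $\hat{\mathfrak K}^{\rm q}$ under $\mathrm D_y$, $\mathrm D_z$ and $\mathrm J$ established in~\cite{OpanasenkoPopovych2018}. The verification for the third family, by contrast, is direct and self-contained.
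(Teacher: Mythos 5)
Your proposal is correct and follows essentially the same route as the paper: the paper's proof simply computes the action of $\mathfrak R_{1,\mathfrak Q}$, $\mathfrak R_{2,\mathfrak Q}$ and $\mathfrak R_{3,\mathfrak P}$ on the spanning generalized vector fields $\check{\mathcal D}$, $\check{\mathcal R}(\Gamma)$, $\check{\mathcal P}(\Phi)$, $\check{\mathcal W}(\Omega)$ of $\hat\Sigma^{\rm q}$ and reads off that each image again lies in $\hat\Sigma^{\rm q}$. Your identification of the $\mathfrak R_{1,\mathfrak Q}$- and $\mathfrak R_{2,\mathfrak Q}$-images with fields of the form $-\tilde Q^{w,0}$ via Lemma~\ref{lem:IDFM:GenSymsOfSupersystem2B}, and your direct derivation of $\mathscr Bu=0$ from the determining equations for $\mathfrak R_{3,\mathfrak P}$, merely make explicit the justification the paper leaves implicit (the latter in fact covers arbitrary elements of $\hat\Sigma^{\rm q}$, not only the spanning set).
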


\begin{proof}
We directly compute the action of the operators~$\mathfrak R_{1,\mathfrak Q}$, $\mathfrak R_{2,\mathfrak Q}$ and
$\mathfrak R_{3,\mathfrak P}$
on the generalized vector fields spanning the algebra~$\hat\Sigma^{\rm q}$, obtaining
\begin{gather*}
\mathfrak R_{1,\mathfrak Q}\colon\
\check{\mathcal D}\mapsto\check{\mathcal R}(\mathfrak Q\tilde q),\
\check{\mathcal R}(\Gamma)\mapsto\check{\mathcal R}\big(\mathfrak Q(\tilde{\mathscr D}_y+1)\Gamma\big),\
\check{\mathcal P}(\Phi)\mapsto\check{\mathcal P}\big(\mathfrak Q(\Phi+2\Phi_{\ri^1})\big),\
\check{\mathcal W}(\Omega)\mapsto0.
\\[1ex]
\mathfrak R_{2,\mathfrak Q}\colon\
\check{\mathcal D}\mapsto\check{\mathcal R}(\mathfrak Q\tilde{\mathscr D}_z\tilde q),\
\check{\mathcal R}(\Gamma)\mapsto\check{\mathcal R}\big(\mathfrak Q(\tilde{\mathscr D}_z+1)\Gamma\big),\
\check{\mathcal P}(\Phi)\mapsto\check{\mathcal P}\big(\mathfrak Q(\Phi-2\Phi_{\ri^2})\big),\
\check{\mathcal W}(\Omega)\mapsto0.
\\[1ex]
\mathfrak R_{3,\mathfrak P}\colon\
\check{\mathcal D}\mapsto\check{\mathcal W}(\mathfrak P1)=\check{\mathcal W}(\Omega^0),\quad
\check{\mathcal R}(\Gamma),\check{\mathcal P}(\Phi)\mapsto0,\quad
\check{\mathcal W}(\Omega)\mapsto\check{\mathcal W}\big(\mathfrak P\mathscr A(\Omega/\omega^1)\big).
\end{gather*}

This means that the above operators are recursion operators of the system~$\mathcal S$.
\end{proof}

\begin{remark}
The action of the Teshukov's recursion operator~$\mathfrak R_{\rm T}$ on symmetries of the system~\eqref{eq:IDFMDiagonalizedSystem}
coincides with that of the recursion operator
$\frac12 \mathfrak R_{1,1}-\frac12\mathfrak R_{2,1}+\mathfrak R_{3,1}$.
\end{remark}

One can also find nonlocal recursion operators for the system~\eqref{eq:IDFMDiagonalizedSystem}.
We construct an example of such an operator.
Let~$\eta=(\eta^1,\eta^2,\eta^3)^{\mathsf T}$ be an arbitrary solution of the system~\eqref{eq:IDFMLinearizedSystem}.
Consider a first-order pseudo-differential operator~$\mathfrak R_4$ acting nonlocally on~$\eta$ as
\begin{gather}\label{eq:IDFMRecursionOperator}
\mathfrak R_4\eta=A(\ri)\mathrm D_x\eta+B(\ri,\ri_x)\eta+C(\ri,\ri_x)Y,
\end{gather}
where~$A=(A^{ij})$ and~$B=(B^{ij})$ are smooth $3\times3$ matrix functions of their arguments,
$C$~is a 3-component column of smooth functions of~$(\ri,\ri_x)$
and~$Y$ is the potential associated with the conserved current
$\big(\eta^1+\eta^2, V^1\eta^1+V^2\eta^2\big)$ of the system~\eqref{eq:IDFMLinearizedSystem},
which is the linearized counterpart of the conserved current $\big(\ri^1+\ri^2,\frac12(\ri^1+\ri^2)^2+\ri^1-\ri^2\big)$
of the system~\eqref{eq:IDFMDiagonalizedSystem}.
In other words, the potential~$Y$ is defined by the system
\begin{gather}\label{eq:IDFMPseudoDifferentialPart}
\mathrm D_tY=-V^1\eta^1-V^2\eta^2,\quad \mathrm D_xY=\eta^1+\eta^2.
\end{gather}
By definition, the operator~$\mathfrak R_4$ is a recursion operator of the system~\eqref{eq:IDFMDiagonalizedSystem}
if for an arbitrary solution~$\eta$ of the system~\eqref{eq:IDFMLinearizedSystem},
$\mathfrak R_4\eta$ is a solution of the same system.
We successively substitute the ansatz~\eqref{eq:IDFMRecursionOperator} for~$\mathfrak R_4\eta$,
the expressions~\eqref{eq:IDFMPseudoDifferentialPart} for~$\mathrm D_tY$ and~$\mathrm D_xY$
and the expressions for $\mathrm D_t\eta$ in view of the system~\eqref{eq:IDFMLinearizedSystem}
into the system~\eqref{eq:IDFMLinearizedSystem} for~$\mathfrak R_4\eta$,
which leads to the system
\begin{gather*}{}
(A^{kj}_{\ri^l}\mathrm D_x\eta^j+B^{kj}_{\ri^l}\eta^j+C^k_{\ri^l}Y)(V^k-V^l)\ri^l_x
+(B^{kj}_{\ri^l_x}\eta^j+C^k_{\ri^l_x}Y)\big((V^k-V^l)\ri^l_{xx}-(\ri^1_x+\ri^2_x)\ri^l_x\big)
\\\qquad{}
-A^{kj}\mathrm D_x\big(V^j\mathrm D_x\eta^j+(\eta^1+\eta^2)\ri^j_x\big)
-B^{kj}\big(V^j\mathrm D_x\eta^j+(\eta^1+\eta^2)\ri^j_x\big)
-C^k(V^1\eta^1+V^2\eta^2)
\\\qquad{}
+V^k\big(A^{kj}\mathrm D^2_x\eta^j+B^{kj}\mathrm D_x\eta^j+C^k(\eta^1+\eta^2)\big)
\\\qquad{}
+\ri^k_x\big(A^{1j}\mathrm D_x\eta^j+B^{1j}\eta^j+C^1Y+A^{2j}\mathrm D_x\eta^j+B^{2j}\eta^j+C^2Y\big)=0.
\end{gather*}
Here and in what follows the indices~$j$, $k$, $k'$ and~$l$ run from 1 to~3,
we assume summation with respect to the repeated indices~$j$ and~$l$, and there is no summation over~$k$ and~$k'$.
The splitting of the obtained system with respect to~$\mathrm D^2_x\eta^{k'}$, $\mathrm D_x\eta^{k'}$, $\eta^{k'}$ and~$Y$
yields the system of determining equations for entries of~$A$, $B$ and~$C$,
\begin{gather*}
A^{kk'}(V^k-V^{k'})=0,
\\[1ex]
A^{kk'}_{\ri^l}(V^k-V^l)\ri^l_x-A^{kk'}(\ri^1_x+\ri^2_x)
-(\delta^1_{k'}+\delta^2_{k'})A^{kj}\ri^j_x+(V^k-V^{k'})B^{kk'}\!+\ri^k_x(A^{1k'}\!{+}A^{2k'})=0,
\\[1ex]
B^{kk'}_{\ri^l}(V^k-V^l)\ri^l_x
+B^{kk'}_{\ri^l_x}\big((V^k-V^l)\ri^l_{xx}-(\ri^1_x+\ri^2_x)\ri^l_x\big)
+\ri^k_x(B^{1k'}\!+B^{2k'})
\\ \qquad{}
-(\delta^1_{k'}+\delta^2_{k'})\big(A^{kj}\ri^j_{xx}+B^{kj}\ri^j_x-(V^k-V^{k'})C^k\big)=0,
\\[1ex]
C^k_{\ri^l}(V^k-V^l)\ri^l_x+C^k_{\ri^l_x}\big((V^k-V^l)\ri^l_{xx}-(\ri^1_x+\ri^2_x)\ri^l_x\big)+\ri^k_x(C^1+C^2)=0,
\end{gather*}
solving which, we prove the following proposition.

\begin{proposition}
The system~\eqref{eq:IDFMDiagonalizedSystem} admits the formally pseudo-differential recursion
operator~$\mathfrak R_4$ acting on a symmetry characteristic~$\eta$ as
\begin{gather*}
\mathfrak R_4\eta=B\eta+CY,\quad
\text{where}\quad
B=\begin{pmatrix}
2 &  0 & 0\\
0 & -2 & 0\\
0 &  0 & 0
\end{pmatrix},\quad
C=\begin{pmatrix}
\ri^1_x\\ \ri^2_x\\ \ri^3_x
\end{pmatrix},
\end{gather*}
and $Y$ is the potential of the system~\eqref{eq:IDFMLinearizedSystem}
that is defined by~\eqref{eq:IDFMPseudoDifferentialPart}.
\end{proposition}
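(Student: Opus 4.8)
The proof plan is to solve the overdetermined determining system for the matrices~$A$, $B$ and the column~$C$ that has just been derived. First I would exploit the strict hyperbolicity of~$\mathcal S$: since the characteristic speeds $V^1$, $V^2$, $V^3$ in~$\eqref{eq:IDFMCharacteristicSpeeds}$ are pairwise distinct, the equation $A^{kk'}(V^k-V^{k'})=0$ forces $A^{kk'}=0$ for $k\ne k'$, so $A$ is diagonal. Seeking a recursion operator with no differential part, I set $A\equiv0$; the second determining equation then reduces for $k\ne k'$ to $(V^k-V^{k'})B^{kk'}=0$, whence $B^{kk'}=0$, so~$B$ is diagonal as well, while for $k=k'$ it is satisfied identically.

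Next I would substitute $A\equiv0$ and the diagonality of~$B$ into the remaining two determining equations and split them with respect to the jet variables. Splitting with respect to~$\ri^l_{xx}$ restricts $B^{kk}$ and $C^k$ to depend only on~$\ri$ and~$\ri^k_x$; on the branch with constant entries of~$B$, which is the one recorded in the statement, the coefficients of~$\ri^l_x$ provide the algebraic relations tying $B^{11}$, $B^{22}$, $B^{33}$ to the components of~$C$, the fourth determining equation forces~$C$ to be proportional to $(\ri^1_x,\ri^2_x,\ri^3_x)^{\mathsf T}$, and, after discarding the trivial summand proportional to the identity operator, the surviving operator is~$\mathfrak R_4$ as displayed.

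It remains to confirm that the nonlocal term is meaningful, i.e.\ that the potential~$Y$ prescribed by~$\eqref{eq:IDFMPseudoDifferentialPart}$ actually exists. For this I would check that $\big(\eta^1+\eta^2,\,V^1\eta^1+V^2\eta^2\big)$ is a conserved current of the linearized system~$\eqref{eq:IDFMLinearizedSystem}$: adding the $k=1$ and $k=2$ equations of~$\eqref{eq:IDFMLinearizedSystem}$ and using $\mathrm D_xV^1=\mathrm D_xV^2=\ri^1_x+\ri^2_x$ (since $V^1=\ri^1+\ri^2+1$, $V^2=\ri^1+\ri^2-1$) yields $\mathrm D_t(\eta^1+\eta^2)+\mathrm D_x(V^1\eta^1+V^2\eta^2)=0$ on~$\mathcal S^{(\infty)}$, so~$\eqref{eq:IDFMPseudoDifferentialPart}$ is compatible and~$Y$ is well defined; as already observed, $Y$ is the linearization of the potential of the conserved current $\big(\ri^1+\ri^2,\tfrac12(\ri^1+\ri^2)^2+\ri^1-\ri^2\big)$ of~$\eqref{eq:IDFMDiagonalizedSystem}$.

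The only step that requires genuine care is the systematic splitting of the last two determining equations together with the correct bookkeeping of the contribution of~$Y$ through the total derivatives; once the differential part~$A$ and the off-diagonal entries of~$B$ have been eliminated, what remains is a small overdetermined linear system in the remaining unknowns that is solved by inspection, so there is no conceptual obstacle beyond routine computation.
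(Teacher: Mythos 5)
Your proposal follows the paper's route exactly: the paper derives precisely this determining system for $A$, $B$ and $C$ and then simply records that solving it gives~$\mathfrak R_4$, so your elimination scheme (off-diagonal $A^{kk'}=0$ from strict hyperbolicity, the ansatz $A\equiv0$, hence off-diagonal $B^{kk'}=0$, then splitting the remaining two equations with respect to the jet variables) is the intended argument, and your explicit check that $\big(\eta^1+\eta^2,\,V^1\eta^1+V^2\eta^2\big)$ is a conserved current of~\eqref{eq:IDFMLinearizedSystem}, so that $Y$ is well defined, is a useful addition that the paper leaves implicit.

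One place where the ``routine computation'' actually bites: with constant diagonal~$B$ and $A\equiv0$, the third determining equation for $k'\in\{1,2\}$, $k\ne k'$, reduces to $(B^{1k'}+B^{2k'})\ri^k_x-B^{kk}\ri^k_x+(V^k-V^{k'})C^k=0$, which yields $C^k=\tfrac12(B^{11}-B^{22})\,\ri^k_x$ together with $B^{11}-B^{33}=B^{33}-B^{22}=\tfrac12(B^{11}-B^{22})$; the fourth equation is then an identity for $C^k\propto\ri^k_x$ rather than the source of that proportionality. For $B=\mathop{\rm diag}(2,-2,0)$ this forces $C=2(\ri^1_x,\ri^2_x,\ri^3_x)^{\mathsf T}$, i.e.\ the pair $(B,C)$ displayed in the proposition is internally inconsistent by a factor of~$2$ (a direct substitution into the third linearized equation leaves the residual $\ri^3_x(\eta^1-\eta^2)$). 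The operator exists exactly as you describe, but with either $B=\mathop{\rm diag}(1,-1,0)$ and the displayed~$C$, or the displayed~$B$ and $2C$; your assertion that the computation lands on ``$\mathfrak R_4$ as displayed'' would not survive carrying the calculation through, so you should record the corrected normalization.
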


\section{Conclusion}\label{sec:IDFMConclusions}

To study the diagonalized form~\eqref{eq:IDFMDiagonalizedSystem} of the system~$\mathcal S$, we heavily rely on its two primary features.
The first feature is the degeneracy of~$\mathcal S$ in the sense
that this system is not genuinely nonlinear with respect to~$\ri^3$
and, moreover, it is partially decoupled since the first two equations of~$\mathcal S$ do not involve~$\ri^3$.
To take into account the degeneracy efficiently, we introduce the modified coordinates on~$\mathcal S^{(\infty)}$,
where derivatives of~$\ri^3$ are replaced by~$\omega$'s constituting a functional basis of the kernel of the operator $\mathscr B$.
This operator is nothing else but the differential operator in the total derivatives that is associated with the equation on~$\ri^3$.
From another perspective, the infinite tuple of~$\omega$'s,
$\omega^0:=\ri^3$, $\omega^{\kappa+1}:=\mathscr A\omega^\kappa$, $\kappa\in\mathbb N_0$,
can be seen to be generated
by the differential operator~$\mathscr A:={\rm e}^{\ri^2-\ri^1}\mathscr D_x$,
commuting with~$\mathscr B$, $[\mathscr A,\mathscr B]=0$, cf.~\cite{Doyle1994}.
The introduction of the modified coordinates essentially simplifies
computations of all kinds of symmetry-like objects for the system~$\mathcal S$.
Due to the partial decoupling of the system~$\mathcal S$, we recognize its essential subsystem~$\mathcal S_0$
constituted by the equations~\eqref{eq:IDFMDEq1}, \eqref{eq:IDFMDEq2}.
The second primary feature of~$\mathcal S$ is the linearization of~$\mathcal S_0$
to the (1+1)-dimensional Klein--Gordon equation,
which was thoroughly studied from the point of view of generalized symmetries and conservation laws
in~\cite{OpanasenkoBihloPopovychSergyeyev2020}.

In turn, these features allow us to describe symmetry-like objects for the system~$\mathcal S$
by working within the following general approach.
For a given kind of symmetry-like objects for~$\mathcal S$, we show
that the chosen space~$U$ of canonical representatives of equivalence classes of such objects
is the sum of three subspaces, $U=U_1+U_2+U_3$.
One of them, say, $U_1$, stems from the degeneracy of~$\mathcal S$,
and thus its elements are parameterized by an arbitrary function of a finite but unspecified number of~$\omega$'s.
The other two subspaces, $U_2$ and~$U_3$, are related to the linearization of~$\mathcal S_0$
to the (1+1)-dimensional Klein--Gordon equation~\eqref{eq:IDFMSupersystemA}.
Singling out these two subspaces is induced by decomposing the objects of the same kind for
the Klein--Gordon equation as sums of those underlaid by linear superposition of solutions of~\eqref{eq:IDFMSupersystemA}
and those associated with linear generalized symmetries of~\eqref{eq:IDFMSupersystemA}.
This is why the elements of the subspaces~$U_2$ and~$U_3$
are parameterized by an arbitrary solution of the (1+1)-dimensional Klein--Gordon equation
and by characteristics of reduced linear generalized symmetries of this equation, respectively.
Although $(U_1+U_2)\cap U_3=\{0\}$, the sum $U_1+U_2+U_3$ is not direct
since the subspaces~$U_1$ and~$U_2$ are not disjoint, and their intersection is one-dimensional.

The first kind of objects we exhaustively describe for the system~$\mathcal S$ is given by generalized symmetries.
Not all generalized symmetries of the Klein--Gordon equation~\eqref{eq:IDFMSupersystemA}
have counterparts among generalized symmetries of the system~$\mathcal S$,
which was also noted in~\cite{OpanasenkoBihloPopovychSergyeyev2020} for first-order generalized symmetries.
The most difficult problem here, which is solved in Lemma~\ref{lem:IDFM:GenSymsOfSupersystem2B},
is to single out the subalgebra~$\mathfrak A$ of canonical representatives of generalized symmetries
of the Klein--Gordon equation~\eqref{eq:IDFMSupersystemA} that have such counterparts.
A complementary subalgebra to~$\mathfrak A$ is
$\bar{\mathfrak A}=\langle\,(\mathscr J^\kappa q)\p_q,\,\kappa\in\mathbb N\,\rangle$.
We conjecture that elements of~$\bar{\mathfrak A}$ have counterparts
among nonlocal, or specifically potential, symmetries of the system~$\mathcal S$.
To show this, we plan to study certain Abelian coverings and potential symmetries
of the system~$\mathcal S$ and of the Klein--Gordon equation~\eqref{eq:IDFMSupersystemA}.
We expect that the main role in this consideration will be played
by the conservation laws of the Klein--Gordon equation~\eqref{eq:IDFMSupersystemA}
with characteristics of the form $\mathscr J^\kappa {\rm e}^{y+z}$, $\kappa\in\mathbb N_0$,
and by their counterparts for the system~$\mathcal S$.

Considering cosymmetries and local conservation laws,
we do not need to make the selection among those for the Klein--Gordon equation~\eqref{eq:IDFMSupersystemA}
since all of them have counterparts for the system~$\mathcal S$.
For conservation laws, this follows directly from the general assertion proved in~\cite[Theorem~1]{KunzingerPopovych2008}.
Amongst cosymmetries, local conservation laws and their characteristics,
the complete description of the space of cosymmetries for the system~$\mathcal S$ is the most complicated 
since it requires utilizing a couple of nontrivial tricks within the framework of our general approach.

To construct the space of local conservation laws of~$\mathcal S$,
we have to make use of the direct method \cite{PopovychIvanova2005b,Wolf2002}
whose essence is the direct construction of conserved currents canonically representing conservation laws
using the definitions of conserved currents and of their equivalence.
The standard approach~\cite{Bocharov1999} based on singling out conservation-law characteristics among cosymmetries
is not effective for the system~$\mathcal S$
since its application to~$\mathcal S$ leads to too cumbersome computations.
At the same time, we still need to know conservation-law characteristics for the system~$\mathcal S$, in particular,
to look for special-feature conservation laws, like low-order and translation-invariant ones.
The known formula~\cite[Proposition 7.41]{Tsujishita1982} relating characteristics of conservation laws
of systems in the extended Kovalevskaya form~\cite[Definition~4]{PopovychBihlo2020}
to densities of these conservation laws gives suitable expressions only
for characteristics of conservation laws from the second family of Theorem~\ref{thm:IDFM:CLs},
which are of order zero.
The other two families should be tackled differently.
For the first family, we in fact derive an analogue of the above formula in terms of the operator~$\mathcal A$
using the formal integration by parts.
Characteristics of conservation laws from the third family are constructed from their counterparts
being variational symmetries of the Klein--Gordon equation~\eqref{eq:IDFMSupersystemA}.
We also prove that under the action of generalized symmetries of the system~$\mathcal S$
on its space of conservation laws,
a generating set of conservation laws of this system
is constituted by two zeroth-order conservation laws.
One of them belongs to and generates the first subspace of conservation laws,
which is related to the degeneracy of~$\mathcal S$.
The other is the counterpart of a single generating conservation law
of the Klein--Gordon equation~\eqref{eq:IDFMSupersystemA}.
It belongs to the third subspace of conservation laws of~$\mathcal S$
but generates the second subspace as well.
The claim on generation of the entire third subspace
is unexpected since only a proper part of linear generalized symmetries
of the Klein--Gordon equation~\eqref{eq:IDFMSupersystemA}
are naturally mapped to generalized symmetries of~$\mathcal S$
but the amount of the images still suffices for generating all required conservation laws.

Interrelating generalized symmetries and cosymmetries,
we construct a family of compatible Hamiltonian operators for the system~$\mathcal S$
parameterized by an arbitrary function of~$\ri^3$,
and a Hamiltonian operator from this family is degenerate if the corresponding value of the parameter function vanishes at some point.
This fundamentally differs from the case of genuinely nonlinear hydrodynamic-type systems,
for which the number of local Hamiltonian operators of hydrodynamic type is known not to exceed~$n+1$,
where $n$ is the number of dependent variables, see~\cite{FerapontovPavlov1991}.
Note that the conjecture from~\cite{KerstenKrasilshchikVerbovetsky2006}
that skew-symmetric Noether operators for non-scalar systems of first-order evolution equations are Hamiltonian ones
holds for Noether operators of~$\mathcal S$  with entries of the form~\eqref{eq:AnsatzForNoetherOps}.

Finally, having the comprehensive description of the algebra of generalized symmetries of the system~$\mathcal S$ at our disposal,
we find \textit{ad hoc} broad families of local recursion operators, which are presented in Theorem~\ref{thm:IDFMrecursionOps}.
The system~$\mathcal S$ admits the canonical Teshukov's recursion operator~$\mathfrak R_{\rm T}$
but this operator is equivalent to a linear combination of the three simplest local recursion operators from Theorem~\ref{thm:IDFMrecursionOps}.
We also construct a nonlocal recursion operator of~$\mathcal S$.
It is clear that one can construct many such operators, in particular,
using the relation of the system~$\mathcal S$ to the Klein--Gordon equation~\eqref{eq:IDFMSupersystemA},
which will be a subject of our further studies.

We should like to emphasize that the local description of the solution set of the system~$\mathcal S$
in Theorem~\ref{thm:IDFMCompleteSolutioN} is implicit and
involves the general solution of the (1+1)-dimensional Klein--Gordon equation.
This is why it is difficult to further use this description,
and thus it is still worthwhile to comprehensively study the system~$\mathcal S$
within the framework of symmetry analysis of differential equations.

\looseness=-1
As the essential subsystem~$\mathcal S_0$ coincides with the diagonalized form of the system
describing one-dimensional isentropic gas flows with constant sound speed~\cite[Section~2.2.7, Eq.~(16)]{RozhdestvenskiiJanenko1983},
symmetry-like objects of~$\mathcal S_0$ deserve a separate consideration
but in fact they are implicitly described in the present paper.
In contrast to the system~$\mathcal S$,
all the quotient spaces of symmetry-like objects of the subsystem~$\mathcal S_0$ are isomorphic to their counterparts
for the system~\eqref{eq:IDFMSupersystemA},~\eqref{eq:IDFMSupersystemC} and thus to their counterparts
for the Klein--Gordon equation~\eqref{eq:IDFMSupersystemA}.
Therefore, to construct an algebra of canonical representatives of generalized symmetries for the subsystem~$\mathcal S_0$,
we take the respective algebra for the equation~\eqref{eq:IDFMSupersystemA}
and follow the procedure given in the first paragraph of the proof of Theorem~\ref{thm:IDFMGenSyms},
just ignoring the $\ri^3$-components in the point transformation~\eqref{eq:IDFMInverseToTransReducingToKGEq}
and in the vector field~$\tilde Q$.
As a result, we obtain that the quotient algebra of generalized symmetries of the subsystem~$\mathcal S_0$
is naturally isomorphic to the algebra spanned by the generalized vector fields
\begin{gather*}
\big(x-(\ri^1+\ri^2+1)t\big)\ri^1_x\p_{\ri^1}+\big(x-(\ri^1+\ri^2-1)t\big)\ri^2_x\p_{\ri^2},
\quad
{\rm e}^{(\ri^2-\ri^1)/2}\big(\Gamma\ri^1_x\p_{\ri^1}+\tilde{\mathscr D}_z\Gamma\ri^2_x\p_{\ri^2}\big),
\\
{\rm e}^{(\ri^2-\ri^1)/2}\left(
(\Phi+2\Phi_{\ri^1})\ri^1_x\p_{\ri^1}
+(\Phi-2\Phi_{\ri^2})\ri^2_x\p_{\ri^2}\right),
\end{gather*}
where the parameter function~$\Phi=\Phi(\ri^1,\ri^2)$ runs through the solution set of
the Klein--Gordon equation $\Phi_{\ri^1\ri^2}=-\Phi/4$,
$\Gamma$ runs through the set
$
\{\tilde{\mathscr J}^\kappa\tilde q,\,
\tilde{\mathscr D}_y^\iota\tilde{\mathscr J}^\kappa\tilde q,\,
\tilde{\mathscr D}_z^\iota\tilde{\mathscr J}^\kappa\tilde q,\,
\kappa\in\mathbb N_0,\,\iota\in\mathbb N\}
$
with
\begin{gather*}
\tilde{\mathscr D}_y:=-\frac1{\ri^1_x}\big(\mathscr D_t+(\ri^1+\ri^2-1)\mathscr D_x\big),\quad
\tilde{\mathscr D}_z:=-\frac1{\ri^2_x}\big(\mathscr D_t+(\ri^1+\ri^2+1)\mathscr D_x\big),
\\
\tilde{\mathscr J}:=\frac{\ri^1}2\tilde{\mathscr D}_y+\frac{\ri^2}2\tilde{\mathscr D}_z,\quad
\tilde q:={\rm e}^{(\ri^1-\ri^2)/2}\big(x-(\ri^1+\ri^2+1)t\big),
\end{gather*}
and instead of the complete operators~$\mathscr D_t$ and~$\mathscr D_x$ defined in Section~\ref{sec:IDFMPreliminaries},
one should use their restrictions to $(\ri^1,\ri^2)$,
\[
\mathscr D_x:=\p_x+\sum_{\kappa=0}^\infty\sum_{i=1}^2\ri^i_{\kappa+1}\p_{\ri^i_\kappa},\quad
\mathscr D_t:=\p_t-\sum_{\kappa=0}^\infty\sum_{i=1}^2\mathscr D_x^\kappa(V^i\ri^i_1)\p_{\ri^i_\kappa}.
\]
The descriptions of cosymmetries and conservation laws of~$\mathcal S_0$
are derived from those for the system~$\mathcal S$ by
excluding the first families of cosymmetries and conservation laws,
which are related to the degeneracy of~$\mathcal S$,
in Theorems~\ref{thm:IDFM:Cosyms} and~\ref{thm:IDFM:CLs}.

\section*{Acknowledgments}
The authors thank Galyna Popovych for helpful discussions and interesting comments.
The research of AB and SO was undertaken, in part,
thanks to funding from the Canada Research Chairs program, the NSERC Discovery Grant program and the InnovateNL LeverageR{\&}D program.
The research of SO was supported by the NSERC's Alexander Graham Bell Canada Graduate Scholarship~-- Doctoral Fellowship.
The research of ROP was supported by the Austrian Science Fund (FWF),
projects P25064 and P29177, and by ``Project for fostering collaboration in
science, research and education'' funded by the Moravian-Silesian
Region, Czech Republic. The research of AS was supported in part by the Grant Agency of the Czech Republic (GA \v{C}R)
under grant P201/12/G028.

\footnotesize

\end{document}